\newcommand{\bbC}    {\mathbb C}
\newcommand{\bbR}    {\mathbb R}
\newcommand{\bbF}     {\mathbb F}
\newcommand{\bbZ}    {\mathbb Z}
\newcommand{\bbN}    {\mathbb N}
\newcommand{\cA}    {{\cal A}}
\newcommand{\cB}    {{\cal B}}
\newcommand{\cC}    {{\cal C}}
\newcommand{\cD}    {{\cal D}}
\newcommand{\cE}    {{\cal E}}
\newcommand{\cI}    {{\cal I}}
\newcommand{\cK}    {{\cal K}}
\newcommand{\cL}    {{\cal L}}
\newcommand{\cM}    {{\cal M}}
\newcommand{\cN}     {{\cal N}}
\newcommand{\cS}    {{\cal S}}
\newcommand{\cU}    {{\cal U}}
\newcommand{\cV}    {{\cal V}}
\newcommand{\al}    {\alpha}
\newcommand{\be}    {\beta}
\newcommand{\de}    {\delta}
\newcommand{\De}    {\Delta}
\newcommand{\eps}    {\epsilon}
\newcommand{\ka}    {\kappa}
\newcommand{\om}    {\omega}
\newcommand{\Ga}    {\Gamma}
\newcommand{\Om}    {\Omega}
\newcommand{\si}     {\sigma}
\newcommand{\vk}     {\varkappa}
\newcommand{\vfi}    {\varphi} 
\newcommand{\vte}    {\vartheta}
\newcommand{\A}      {\mathrm A}
\newcommand{\B}       {\mathrm B}
\newcommand{\E}       {\mathrm E}
\newcommand{\I}         {\mathrm I}
\newcommand{\X}       {\mathrm X}
\newcommand{\U}        {\mathrm U}
\newcommand{\Z}         {\mathrm Z}
\newcommand{\Sr}        {\mathrm S}
\newcommand{\alg}       {\mathrm{alg}}
\newcommand{\Lie}       {\mathrm{Lie}}
\newcommand{\bA}     {\mathbf A}
\newcommand{\ba}     {\mathbf a}
\newcommand{\bD}      {\mathbf D}
\newcommand{\bm}    {\mathbf m}
\newcommand{\bU}     {\mathbf U}
\newcommand{\bX}    {\mathbf X}
\newcommand{\bR}     {\mathbf R}
\newcommand{\bu}    {\mathbf u}
\newcommand{\bad}     {\mathbf{ad}}
\newcommand{\fA}    {\mathfrak A}
\newcommand{\fD}    {\mathfrak D}
\newcommand{\fF}      {\mathfrak F}
\newcommand{\fM}    {\mathfrak M}
\newcommand{\w}       {\wedge}
\newcommand{\ck}      {\check} 
\newcommand{\na}      {\nabla} 
\newcommand{\bcA}    {\pmb{\cal A}}
\newcommand{\bcD}    {\pmb{\cal D}}
\newcommand{\bcE}    {\pmb{\cal E}}
\newcommand{\bGa}   {\pmb{\Gamma}}
\newcommand{\bfD}    {\pmb{\mathfrak D}}
\newcommand{\bfM}    {\pmb{\mathfrak M}}
\newcommand{\bna}    {\pmb{\nabla}}
\newcommand{\bphi}   {\pmb{\phi}}
\newcommand{\brho}   {\pmb{\rho}}
\newcommand{\bvte}   {\pmb{\vartheta}}
\newcommand{\p}        {\partial}
\newcommand{\bp}       {\pmb{\partial}}
\DeclareMathOperator{\id}    {\,id}
\DeclareMathOperator{\Hom}   {Hom}
\DeclareMathOperator{\End}   {End}
\DeclareMathOperator{\im}    {Im}
\DeclareMathOperator{\ke}    {Ker}
\DeclareMathOperator{\com}   {{\scriptstyle\circ}\,}
\DeclareMathOperator{\ad}    {ad}
\DeclareMathOperator{\Sym}     {Sym}
\newtheorem{theorem}{Theorem}
\newtheorem{lemma}{Lemma}
\newtheorem{prop}{Proposition}
\newtheorem{cor}{Corollary}
\theoremstyle{definition} 
\newtheorem{rem}{Remark} 
\newtheorem{exa}{Example}
\newtheorem{defi}{Definition}
\newtheorem{ass}{Assumption}
\begin{document}

\title{On analysis in differential algebras and modules}
\author
          {
             Zharinov V.V.
             \thanks{Steklov Mathematical Institute}
             \thanks{E-mail: zharinov@mi.ras.ru}
             \thanks{This research was performed at the Steklov Mathematical Institute 
             of Russian Academy of Sciences and supported by a grant from the Russian 
             Science Foundation (Project No. 14-50-00005).}
            }
\date{}
\maketitle

\begin{abstract}
A short introduction to the mathematical methods and technics 
of differential algebras and modules adapted to the problems of mathematical 
and theoretical physics is presented.  
\end{abstract} 

{\bf Keywords:} algebra, differential algebra, module, differential module, multiplicator, differentiation, de Rham complex, spectral sequence, variation bicomplex. 

\section{Introduction.} 

Differential algebras (see, for example, \cite{RJF},\cite{KER},\cite{KI}) 
are widely known and used in algebra and topology, 
while their applications in mathematical physics are far less acknowleged 
(but used implicitly, especially in partial differential equations). 
Here we propose a short introduction to the mathematical methods and technics 
of differential algebras and modules adapted to the problems of mathematical and theoretical 
physics. Exposition is based on the personal experience of the author, 
the books \cite{RJF},\cite{KER},\cite{KI},\cite{SM},\cite{RG},\cite{PO},\cite{Z6} 
and the works \cite{AV},\cite{TT},\cite{AI},\cite{Z0},\cite{Z1},\cite{Z2}.

We freely use notation, conventions and results of the paper \cite{Z}. 
In particular: 
\begin{itemize} 
	\item 
		$\bbF=\bbR,\bbC$; 
	\item
		$\bbN=\{1,2,3,\dots\}\subset\bbZ_+=\{0,1,2,\dots\}
		\subset\bbZ=\{0,\pm1,\pm2,\dots\}$. 
\end{itemize} 

We, also, use the following notation of homology theory (see, e.g., \cite{SM}): 
\begin{itemize}
	\item 
		$\Hom(\cS;\cS')$ is the set of all mappings from a set $\cS$ to a set $\cS'$; 
	\item 
		$\Hom_\bbF(\cL;\cL')$ is the linear space of all linear mappings from 
		a linear space $\cL$ to a linear space $\cL'$; 			
	\item 
		$\Hom_\cA(\cM;\cM')$ is the $\cA$-module of all 
		$\cA$-linear mappings from an $\cA$-module $\cM$ to an 
		$\cA$-module $\cM'$, where 
		$\cA$ is an associative commutative algebra; 
	\item 
		$\Hom_{\alg}(\cA;\cA')$ is the set of all algebra morphisms  
		from $\cA$ to $\cA'$, where $\cA,\cA'$ are associative commutative 
		algebras; 	
	\item 
		$\Hom_{\Lie}(\fA;\fA')$ is the set of all Lie algebra 
		morphisms from a Lie algebra $\fA$ to a Lie algebra $\fA'$. 
\end{itemize} 

Remind, a set $\fA$ is called a {\it Lie $\cA$-algebra} if it has two structures: 
\begin{itemize} 
	\item 
		the structure of a Lie algebra with a Lie bracket $[\cdot,\cdot]$; 
	\item 
		the structure of an $\cA$-module, where $\cA$ is an associative commutative 
		algebra; 
\end{itemize}
and these structures are related by the matching condition, 
\begin{itemize} 
	\item
		$[X,f\cdot Y]=Xf\cdot Y+f\cdot[X,Y]$ \ for all \ $X,Y\in\fA$ and $f\in\cA$.
\end{itemize} 
We denote by 
$\Hom_{\Lie\cap\cA}(\fA;\fA')=\Hom_{\Lie}(\fA;\fA')\cap\Hom_\cA(\fA;\fA')$ 
the set of all Lie $\cA$-morphisms from a Lie $\cA$-algebra $\fA$ 
to a Lie $\cA$-algebra $\fA'$.

All linear operations are done over the number field $\bbF$. 
The summation over repeated upper and lower indices is as a rule assumed. 
If the corresponding index set is infinite, 
we assume that the summation is correctly defined.
If objects under the study have natural topologies, 
we assume that the corresponding mappings are continuous. 
For example, if $\cS$ and $\cS'$ are topological spaces, 
then $\Hom(\cS;\cS')$ is the set of all continuous mappings from $\cS$ to $\cS'$.  

We use the terminology accepted in the algebra-geometrical approach to 
partial differential equations, because they are the main example 
of the technics developed below. 

\section{Differential algebras.} 
In this section (see \cite{Z},\cite{Z0} for more detail): 
\begin{itemize} 
	\item 
		$\cA$ is an associative commutative algebra; 
	\item 
		$\fM=\fM(\cA)=\End_\cA(\cA)$ is the unital associative algebra 
		of all {\it multiplicators} of the algebra $\cA$; 
	\item 
		$\fD=\fD(\cA)$ is the Lie $\fM$-algebra of all {\it differentiations} 
		of the algebra $\cA$. 
\end{itemize}
\begin{defi} 
A {\it differential algebra} is a pair $(\cA,\cD)$, where 
$\cD=\cD(\cA)$ is a fixed subalgebra ({\it Cartan subalgebra}) of the Lie $\fM$-algebra $\fD=\fD(\cA)$. 
\end{defi}  

\begin{defi} 
A pair $(F,\vfi)$, where the mapping $F\in\Hom_{\alg}(\cA;\cB)$, 
and the mapping $\vfi\in\Hom_{\Lie\cap\fM}(\cD(\cA);\cD(\cB))$, 
is called a {\it morphism} of a differential algebra 
$(\cA,\cD(\cA))$ into a differential algebra $(\cB,\cD(\cB))$, 
if the action $F(Xf)=(\vfi X)(Ff)$ for all $f\in\cA$, $X\in\cD(\cA)$. 
In this case we shall write $(F,\vfi) : (\cA,\cD(\cA))\to(\cB,\cD(\cB))$. 
\end{defi}

Let $(\cA,\cD)$ be a differential algebra. 

\begin{defi} 
A subalgebra $\cB$ (an ideal $\cI$) of the algebra $\cA$ is called {\it differential} 
if $\cD\cB\subset\cB$, i.e., $Xf\in\cB$ for all $X\in\cD$ and $f\in\cB$,  
($\cD\cI\subset\cI$). In this case the pair $(\cB,\cD)$ (the pair $(\cI,\cD)$) 
is a differential algebra.
\end{defi} 
In particular, if $(\cI,\cD)$ is a differential ideal of the differential algebra 
$(\cA,\cD)$, then the quotient differential algebra $(\bcA,\bcD)$ is defined 
by the rule: $\bcA=\cA\big/\cI$, $\bcD=\{\bX=[X]\in\fD(\bcA)\mid X\in\cD\}$, 
$[X][f]=[Xf]$ for all $f\in\cA$, where $[f]=f+\cI,[Xf]=Xf+\cI\in\bcA$. 

\begin{prop} 
For every morphism $(F,\vfi) : (\cA,\cD(\cA))\to(\cB,\cD(\cB))$
the kernel $\ke F$ is a differential ideal of $(\cA,\cD(\cA))$, while  
the image $\im F$ will be a differential subalgebra of $(\cB,\cD(\cB))$ 
if, in addition, the mapping $\vfi : \cD(\cA)\to\cD(\cB)$ is a  surjection. 
\end{prop}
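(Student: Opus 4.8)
The plan is to treat the two assertions separately. In each case one starts from the well-known purely algebraic fact — that the kernel of an algebra morphism is an ideal, and its image a subalgebra — and then checks the additional stability condition from Definition~3 using the compatibility identity $F(Xf)=(\vfi X)(Ff)$.

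For the kernel: fix $X\in\cD(\cA)$ and $f\in\ke F$, so that $Ff=0$. Since $\vfi X\in\cD(\cB)\subset\fD(\cB)$ is a differentiation, it is $\bbF$-linear and in particular sends $0$ to $0$; hence $F(Xf)=(\vfi X)(Ff)=(\vfi X)0=0$, i.e.\ $Xf\in\ke F$. This gives $\cD(\cA)\,\ke F\subset\ke F$, so $(\ke F,\cD(\cA))$ is a differential ideal, and the quotient construction recalled after Definition~3 applies to it.

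For the image: one must show $\cD(\cB)\,\im F\subset\im F$. Take $Y\in\cD(\cB)$ and $g\in\im F$, and write $g=Ff$ with $f\in\cA$. Using the surjectivity of $\vfi:\cD(\cA)\to\cD(\cB)$, pick $X\in\cD(\cA)$ with $\vfi X=Y$; then $Yg=(\vfi X)(Ff)=F(Xf)\in\im F$, as needed.

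I do not expect a genuine obstacle here — both verifications are one line once the compatibility identity is in hand. The only point worth emphasizing is that the surjectivity hypothesis in the second part is essential and used exactly once: if $Y\in\cD(\cB)$ is not of the form $\vfi X$, there is no reason for $Y$ to map $\im F$ into itself, so $\im F$ need not be a differential subalgebra in general.
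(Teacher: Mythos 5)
Your proof is correct and is exactly the intended argument: the paper states this proposition without proof, and the evident verification is the one you give, applying the compatibility identity $F(Xf)=(\vfi X)(Ff)$ to $f\in\ke F$ for the first claim and, via surjectivity of $\vfi$, pulling back $Y\in\cD(\cB)$ to some $X\in\cD(\cA)$ for the second. Your closing remark on where surjectivity is used (and why it cannot be dropped) is also accurate.
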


\begin{defi} 
An element $f\in\cA$ is called {\it $\cD$-constant} if 
$\cD f=0$, i.e., $Xf=0$ for all $X\in\cD$.
\end{defi} 
Let $\cA_\cD$ be the set of all $\cD$-constant elements of the algebra $\cA$. 
Clear, $\cA_\cD$ is a subalgebra of $\cA$. 

\begin{prop} 
Let $(F,\vfi) : (\cA,\cD(\cA))\to(\cB,\cD(\cB))$, where $\vfi$ is a surjection. 
Then $F\big|_{\cA_\cD} : \cA_\cD\to\cB_\cD$. 
\end{prop}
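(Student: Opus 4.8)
The plan is to unwind Definition 2 (the notion of morphism of differential algebras) and use that $\vfi$ is onto. Fix $f\in\cA_\cD$; the goal is to show $Ff\in\cB_\cD$, that is, $Y(Ff)=0$ for every $Y\in\cD(\cB)$.

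First I would take an arbitrary $Y\in\cD(\cB)$ and, by surjectivity of $\vfi:\cD(\cA)\to\cD(\cB)$, pick $X\in\cD(\cA)$ with $\vfi X=Y$. The defining identity of the morphism then gives $F(Xf)=(\vfi X)(Ff)=Y(Ff)$.

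Next, $f\in\cA_\cD$ means $Xf=0$ for all $X\in\cD(\cA)$; in particular $Xf=0$ for the chosen $X$, and applying $F$ — which, being an algebra morphism, is $\bbF$-linear and hence sends $0$ to $0$ — yields $F(Xf)=0$. Combining the last two displays, $Y(Ff)=0$. Since $Y\in\cD(\cB)$ was arbitrary, $Ff\in\cB_\cD$; and as $F$ maps $\cA$ into $\cB$, this gives $F\big|_{\cA_\cD}:\cA_\cD\to\cB_\cD$.

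The only point worth flagging is the role of the surjectivity hypothesis: it is precisely what permits the passage from ``$(\vfi X)(Ff)=0$ for all $X\in\cD(\cA)$'' to ``$Z(Ff)=0$ for all $Z\in\cD(\cB)$''. Without it one would only obtain that $Ff$ is annihilated by the subalgebra $\vfi(\cD(\cA))$, which need not exhaust $\cD(\cB)$. Accordingly I expect no real obstacle here; the statement follows immediately from the morphism axiom together with the assumption on $\vfi$.
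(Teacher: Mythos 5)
Your argument is correct and is exactly the intended one: surjectivity of $\vfi$ lets you write an arbitrary $Y\in\cD(\cB)$ as $\vfi X$, and the morphism identity $F(Xf)=(\vfi X)(Ff)$ together with $Xf=0$ gives $Y(Ff)=0$. The paper states this proposition without proof, and your one-step verification is the natural argument it presupposes.
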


\begin{defi} 
A multiplicator  $R\in\fM$ is called {\it $\cD$-constant} if 
$\cD R=[\cD,R]=0$, i.e., $[X,R]=0$ for all $X\in\cD$. 
\end{defi}
Let $\fM_\cD$ be the set of all $\cD$-constant multiplicators of the algebra $\fM$. 
Clear, 
\begin{itemize}
	\item
		$\fM_\cD$ is an unital subalgebra of the algebra $\fM$; 
	\item 
		$\cA_\cD$ is a submodule of the $\fM_\cD$-module $\cA$.  
\end{itemize}

\begin{defi} 
A differentiation $X\in\fD$ is called a {\it Lie-B\"aclund}
differentiation if $[\cD,X]\subset\cD$, i.e., $[Y,X]\in\cD$ for all $Y\in\cD$. 
\end{defi}
Let $\fD_\cD=\fD_\cD(\cA)$ be the set of all Lie-B\"aclund differentiations 
of the differential algebra $(\cA,\cD)$. 
Clear, 
\begin{itemize} 
	\item 
		$\fD_\cD$ is a subalgebra of the Lie $\fM_\cD$-algebra $\fD$. 
\end{itemize}

\begin{prop} 
The ascending filtration 
\begin{equation*}
	\cD=\fD^{(-1)}_\cD\subset\fD_\cD=\fD^{(0)}_\cD\subset\dots 
	\subset\fD^{(q)}_\cD\subset\fD^{(q+1)}_\cD\subset\cdots
\end{equation*}
of Lie $\fM_\cD$-algebras is defined, where 
	$\fD^{(q)}_\cD=\big\{X\in\fD \ \big| \ [\cD,X]\subset\fD^{(q-1)}_\cD\big\}$, 
	$q\in\bbZ_+$. 
Moreover, 
\begin{itemize} 
	\item 
		$\cD$ is an ideal of the Lie $\fM_\cD$-algebra $\fD_\cD$; 
	\item 
		$[\fD^{(p)}_\cD,\fD^{(q)}_\cD]\subset\fD^{(p+q)}_\cD$, 
		$p,q\in\bbZ_+$. 
\end{itemize}
\end{prop}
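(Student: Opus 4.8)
The plan is to bootstrap everything from one structural input---that $\cD$ is simultaneously a Lie subalgebra and an $\fM$-submodule of $\fD$---together with the Jacobi identity and the compatibility, recalled from \cite{Z}, among the Lie bracket on $\fD$, the $\fM$-module structure on $\fD$, and the action of $\fD$ on the algebra $\fM$ for which $\fM_\cD$ is exactly the set of $\cD$-invariant multiplicators. The inclusions, the $\fM_\cD$-module property and the bracket bound will all follow by short inductions whose only engine is Jacobi.

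\emph{Bottom of the filtration.} Since $\cD$ is Lie-closed, $[\cD,X]\subset\cD$ for each $X\in\cD$, so $\cD=\fD^{(-1)}_\cD\subset\fD^{(0)}_\cD=\fD_\cD$; and $\fD_\cD$ is itself Lie-closed by one use of Jacobi: for $X,Y\in\fD_\cD$ and $Z\in\cD$ we get $[Z,[X,Y]]=[[Z,X],Y]+[X,[Z,Y]]\in[\cD,\fD_\cD]\subset\cD$. That $\cD$ is an ideal of $\fD_\cD$ is then immediate: $[\fD_\cD,\cD]=-[\cD,\fD_\cD]\subset\cD$ is the very definition of $\fD_\cD$, and $\cD$ is an $\fM_\cD$-submodule of $\fD_\cD$ because it is already an $\fM$-submodule of $\fD$.

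\emph{The filtration and its $\fM_\cD$-module structure.} I would rewrite the definition uniformly as $\fD^{(q)}_\cD=\{X\in\fD\mid[\cD,X]\subset\fD^{(q-1)}_\cD\}$ for all $q\in\bbZ_+$ and induct on $q$: if $\fD^{(q-1)}_\cD\subset\fD^{(q)}_\cD$, then $X\in\fD^{(q)}_\cD$ gives $[\cD,X]\subset\fD^{(q-1)}_\cD\subset\fD^{(q)}_\cD$, hence $X\in\fD^{(q+1)}_\cD$; the base case is $\cD\subset\fD_\cD$ above. For the module structure, the matching identity yields, for $R\in\fM_\cD$, $X\in\fD^{(q)}_\cD$, $Z\in\cD$, $[Z,R\cdot X]=[Z,R]\cdot X+R\cdot[Z,X]=R\cdot[Z,X]$ since $[Z,R]=0$; as $[Z,X]\in\fD^{(q-1)}_\cD$ and $\fD^{(q-1)}_\cD$ is an $\fM_\cD$-submodule by the inductive hypothesis (the base $q=-1$ being that $\cD$ is an $\fM$-submodule), we get $R\cdot X\in\fD^{(q)}_\cD$. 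The same monotone rewriting records the fact $[\cD,\fD^{(q)}_\cD]\subset\fD^{(q-1)}_\cD$, i.e.\ the bracket bound already holds whenever one index equals $-1$.

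\emph{The bracket bound.} The substantive claim $[\fD^{(p)}_\cD,\fD^{(q)}_\cD]\subset\fD^{(p+q)}_\cD$ for $p,q\in\bbZ_+$ I would prove by a double induction---outer on $p$, inner on $q$---driven by Jacobi. For $X\in\fD^{(p)}_\cD$, $Y\in\fD^{(q)}_\cD$, $Z\in\cD$ one has $[Z,X]\in\fD^{(p-1)}_\cD$ and $[Z,Y]\in\fD^{(q-1)}_\cD$, so the two terms of $[Z,[X,Y]]=[[Z,X],Y]+[X,[Z,Y]]$ lie in $[\fD^{(p-1)}_\cD,\fD^{(q)}_\cD]$ and $[\fD^{(p)}_\cD,\fD^{(q-1)}_\cD]$ respectively, each inside $\fD^{(p+q-1)}_\cD$ by an induction hypothesis (the first by the outer induction, the second by the inner one; when an index is $0$ one uses instead the definitional inclusion from the previous paragraph). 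Hence $[\cD,[X,Y]]\subset\fD^{(p+q-1)}_\cD$, i.e.\ $[X,Y]\in\fD^{(p+q)}_\cD$. The base $p=0$ is a separate inner induction on $q$ anchored at $[\fD_\cD,\fD_\cD]\subset\fD_\cD$, and the base $q=0$ for $p\geq1$ is the $p=0$ statement read through antisymmetry of the bracket. I expect the only real difficulty to be bookkeeping: keeping the two nested inductions and their base cases straight, and being careful that when an index drops to $-1$ one invokes the definitional $[\cD,\fD^{(q)}_\cD]\subset\fD^{(q-1)}_\cD$, the case $p=q=-1$ of the bound itself being undefined.
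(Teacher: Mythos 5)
Your argument is correct, and it fills in what the paper leaves unproved: this proposition is stated there without any proof (the text only points to \cite{RG},\cite{SM} for the general notion of filtration), so the Jacobi-identity-plus-induction verification you give is exactly the intended routine argument. Two small remarks: your reading of ``filtration of Lie $\fM_\cD$-algebras'' as ``each term is an $\fM_\cD$-submodule and the filtration is compatible with the bracket'' is the right one, since for $q\ge1$ the terms $\fD^{(q)}_\cD$ need not be closed under the bracket (only $[\fD^{(p)}_\cD,\fD^{(q)}_\cD]\subset\fD^{(p+q)}_\cD$ holds); and your double induction on $(p,q)$ could be streamlined to a single induction on $p+q$ with the conventions $\fD^{(-1)}_\cD=\cD$ and the definitional inclusion $[\cD,\fD^{(q)}_\cD]\subset\fD^{(q-1)}_\cD$ serving as the base, which removes most of the bookkeeping you worry about.
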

The general definition of filtration one can find in \cite{RG},\cite{SM}. 
Note, that here and in similar situations below we don't claim that 
	$\cup_{q\in\bbZ_+}\fD^{(q)}_\cD=\fD$  
or 
	$\lim_{q\to\infty}\fD^{(q)}_{\cD}=\fD$
in some sense. 

\begin{defi}\label{D6} 
A differential algebra $(\cA,\cD)$ is called {\it regular} if:
\begin{itemize} 
	\item 
		the Lie $\fM$-algebra $\fD$ is splitted into vertical and horizontal 
		subalgebras, $\fD=\fD_V\oplus_\fM\fD_H$;  
	\item 
		the {\it vertical} subalgebra $\fD_V=\fD_V(\cA)$ has  
		a $\fM$-basis $\p=\{\p_a\in\fD_V\mid a\in\ba\}$, $\ba$ is an 
		index set, $[\p_a,\p_b]=0$, $a,b\in\ba$; 
	\item 
		the {\it horizontal} (Cartan) subalgebra $\fD_H=\fD_H(\cA)=\cD$ 
		has a $\fM$-basis $D=\{D_\mu\in\fD_H\mid \mu\in\bm\}$, 
		$\bm=\{1,\dots,m\}$, $m\in\bbN$, 
		$[D_\mu,D_\nu]=0$, $\mu,\nu\in\bm$; 
	\item 
		the commutators $[D_\mu,\p_a]=\Ga^b_{\mu a}\p_b\in\fD_V$, 
		$\mu\in\bm$, $a,b\in\ba$, the coefficients $\Ga^a_{\mu b}\in\fM$, 
		in particular, 
		$[D_\mu,X]=\na_\mu X=(\na_\mu X)^a\p_a\in\fD_V$ 
		for any $X=X^a\p_a\in\fD_V$, $X^a\in\fM$, where 
		$(\na_\mu X)^a=D_\mu X^a+\Ga^a_{\mu b}X^b$.
\end{itemize}
\end{defi}

Let $(\cA,\fD_H)$ be a regular differential algebra. 

\begin{prop} 
The commutator 
\begin{equation*} 
	[\na_\mu,\na_\nu]^a_b X^b=\big((D_\mu\Ga^a_{\nu b}-D_\nu\Ga^a_{\mu b})
		+(\Ga^a_{\mu c}\Ga^c_{\nu b}-\Ga^a_{\nu c}\Ga^c_{\mu b})\big)X^b, 
		\quad a\in\ba,
\end{equation*} 
or, in the matrix notation, $[\na_\mu,\na_\nu]=F_{\mu\nu}$, where 
\begin{itemize} 
	\item 
		the covariant derivative $\na=(\na_\mu)\in\Hom_\bbF(\fM^\ba;\fM^\ba_\bm)$; 
	\item 
		the connection $\Ga=(\Ga_\mu)$, its components 
		$\Ga_\mu=(\Ga^a_{\mu b})\in\fM^\ba_\ba$; 	  
	\item
		the curvature $F=(F_{\mu\nu})$, its components 
		$F_{\mu\nu}\in\fM^\ba_\ba$; 
	\item 
		$F_{\mu\nu}=D_\mu\Ga_\nu-D_\nu\Ga_\mu+[\Ga_\mu,\Ga_\nu]$, 
		$\mu,\nu\in\bm$.
\end{itemize}
\end{prop}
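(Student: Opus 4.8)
The plan is to compute the commutator $[\na_\mu,\na_\nu]$ acting on an arbitrary vertical differentiation $X=X^a\p_a\in\fD_V$ in two different ways and compare. From Definition~\ref{D6} we know that for any $Y\in\fD_V$ the bracket $[D_\mu,Y]=\na_\mu Y$ again lies in $\fD_V$, with components $(\na_\mu Y)^a=D_\mu Y^a+\Ga^a_{\mu b}Y^b$; in particular $\na_\mu X\in\fD_V$, so $\na_\nu(\na_\mu X)=[D_\nu,[D_\mu,X]]$ is again a well-defined element of $\fD_V$. Hence $[\na_\mu,\na_\nu]X=\na_\mu\na_\nu X-\na_\nu\na_\mu X=[D_\mu,[D_\nu,X]]-[D_\nu,[D_\mu,X]]$. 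By the Jacobi identity in the Lie $\fM$-algebra $\fD$ this equals $[[D_\mu,D_\nu],X]$, which vanishes because $[D_\mu,D_\nu]=0$ by the regularity hypothesis. So on the level of differentiations the commutator of covariant derivatives is zero; the content of the proposition is the identification of the operator-level commutator $[\na_\mu,\na_\nu]$ on the coefficient module $\fM^\ba$ with multiplication by the curvature matrix $F_{\mu\nu}$.

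Concretely, I would expand $(\na_\mu\na_\nu X)^a$ using the component formula twice:
\begin{equation*}
	(\na_\mu\na_\nu X)^a=D_\mu\big(D_\nu X^a+\Ga^a_{\nu b}X^b\big)
		+\Ga^a_{\mu c}\big(D_\nu X^c+\Ga^c_{\nu b}X^b\big).
\end{equation*}
Expanding the derivative of the product via the Leibniz rule for $D_\mu\in\fD$ gives $D_\mu D_\nu X^a+(D_\mu\Ga^a_{\nu b})X^b+\Ga^a_{\nu b}D_\mu X^b+\Ga^a_{\mu c}D_\nu X^c+\Ga^a_{\mu c}\Ga^c_{\nu b}X^b$. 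Now antisymmetrize in $\mu\leftrightarrow\nu$: the term $D_\mu D_\nu X^a$ drops because $D_\mu D_\nu=D_\nu D_\mu$ (as $[D_\mu,D_\nu]=0$ acting on the element $X^a\in\fM\subset\cA$, or more precisely as multiplicators $D_\mu D_\nu-D_\nu D_\mu=0$ since $[D_\mu,D_\nu]=0$ in $\fD$), and the two mixed first-order terms $\Ga^a_{\nu b}D_\mu X^b$ and $\Ga^a_{\mu c}D_\nu X^c$ cancel against their images under the swap. What survives is exactly
\begin{equation*}
	[\na_\mu,\na_\nu]^a_b X^b=\big((D_\mu\Ga^a_{\nu b}-D_\nu\Ga^a_{\mu b})
		+(\Ga^a_{\mu c}\Ga^c_{\nu b}-\Ga^a_{\nu c}\Ga^c_{\mu b})\big)X^b,
\end{equation*}
which is the claimed formula; rewriting the bracketed coefficient in matrix form yields $F_{\mu\nu}=D_\mu\Ga_\nu-D_\nu\Ga_\mu+[\Ga_\mu,\Ga_\nu]$, and the membership assertions $\na\in\Hom_\bbF(\fM^\ba;\fM^\ba_\bm)$, $\Ga_\mu,F_{\mu\nu}\in\fM^\ba_\ba$ are immediate from the fact that each $\Ga^a_{\mu b}\in\fM$ and $\fM$ is an algebra closed under $D_\mu$.

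The only genuinely delicate point is bookkeeping about \emph{where} each operation takes place: $X^a$ and $\Ga^a_{\mu b}$ are multiplicators in $\fM$, $D_\mu$ acts on them as an element of $\fD(\cA)$ via the Cartan action, and one must be careful that the Leibniz identity $D_\mu(R S)=(D_\mu R)S+R(D_\mu S)$ is being applied to the right objects and that the vanishing $[D_\mu,D_\nu]=0$ is used consistently at the multiplicator level. I expect this indexing/placement check to be the main (and essentially only) obstacle; once it is handled the computation is the standard curvature-of-a-connection calculation. I would therefore present the Jacobi-identity remark first as a conceptual check, then carry out the component expansion as above, and finally package the result into the stated matrix identities.
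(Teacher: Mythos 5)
Your component computation is correct and is exactly the standard verification one would give: the paper states this proposition without proof (it is treated as a routine check), and your expansion of $(\na_\mu\na_\nu X)^a$ via the Leibniz rule for the action of $D_\mu$ on multiplicators, followed by antisymmetrization in $\mu,\nu$, establishes the displayed formula and the matrix identity $F_{\mu\nu}=D_\mu\Ga_\nu-D_\nu\Ga_\mu+[\Ga_\mu,\Ga_\nu]$. One remark on your opening paragraph: the dichotomy you draw between ``the level of differentiations, where the commutator vanishes'' and ``the coefficient module, where it is multiplication by $F_{\mu\nu}$'' is illusory, because under the $\fM$-basis identification $X\leftrightarrow(X^a)$ the operator $\na_\mu$ on coefficients \emph{is} $\ad_{D_\mu}$ restricted to $\fD_V$; so your Jacobi argument, applied for instance to $X=\p_b$, i.e.\ to $[[D_\mu,D_\nu],\p_b]=0$, actually shows that the curvature itself vanishes in any regular differential algebra (granting the commutativity of the relevant multiplicators that the formula $(\na_\mu X)^a=D_\mu X^a+\Ga^a_{\mu b}X^b$ already presupposes). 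This is consistent with, and indeed explains, the paper's later assertions that $F=0$ in the Example and in Proposition~\ref{PC}; it does not affect the validity of your proof of the stated identity, but stated as you have it the remark suggests a distinction that is not there.
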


\begin{exa} 
Here: 
\begin{itemize} 
	\item 
		$\X=\bbR^\bm=\{x=(x^\mu)\mid x^\mu\in\bbR, \ \mu\in\bm\}$ 
		is the space of {\it independent} variables; 
	\item 
		$\bU=\bbR^\ba=\{\bu=(u^a) \mid u^a\in\bbR, \ a\in\ba\}$ 
		is the space of the {\it differential} variables, 
		$\ba$ is an infinite index set; 
	\item 
		$\cA=\cC^\infty_{fin}(\X\bU)$ is the unital associative commutative algebra 
		of $\bbF$-valued smooth functions depending on a finite number of the arguments 
		$x^\mu,u^a$, where $\X\bU=\X\times\bU$.
\end{itemize} 
We split the Lie $\cA$-algebra $\fD=\fD(\cA)$ into vertical and horizontal parts,  
$\fD=\fD_V\oplus_\cA\fD_H$, as follows: 
\begin{itemize} 
	\item 
		$\fD_V$ has the $\cA$-basis $\p=\{\p_{u^a}\mid a\in\ba\}$; 
	\item 
		$\fD_H$ has the $\cA$-basis $D=\{D_\mu\mid \mu\in\bm\}$;
\end{itemize} 
where $D_\mu=\p_{x^\mu}+h^a_\mu\cdot\p_{u^a}$, 
while $\p_{u^a},\p_{x^\mu}$ are partial derivatives. 
The condition $[D_\mu,D_\nu]=0$ to be valid, we assume that the coefficients 
$h^a_\mu\in\cA$ satisfy the equalities: $D_\mu h^a_\nu-D_\nu h^a_\mu=0$ 
for all $a\in\ba$, $\mu,\nu\in\bm$. 
 
In this case, the connection $\Ga=(\Ga_\mu)$ has components 
$\Ga^a_{\mu b}=-\p_{u^b}h^a_\mu$, while the curvature $F=(F_{\mu\nu})=0$. 

From the geometrical point of view the the set $D=\{D_\mu\mid\mu\in\bm\}$ 
defines an involutive (in the Frobenius sense) distribution on the space $\X\bU$. 
Any function $\bphi=(\phi^a(x))\in\cC^\infty(\X;\bU)$ defines a $m$-dimensional 
submanifold $\Phi=\{\bu=\bphi(x)\}$ in $\X\bU$. This submanifold will be 
integral manifold of $D$ if $\big(D_\mu(u^a-\phi^a(x))\big)\big|_{\bu=\bphi(x)}=0$ 
for all $\mu\in\bm$, $a\in\ba$, i.e. the function $\bphi(x)$ 
satisfy the {\it defining system}:
$\p_{x^\mu}\phi^a(x)-h^a_\mu(x,\bphi(x))$, $\mu\in\bm$, $a\in\bA$. 
This system has the integrability condition: 
$\p_{x^\mu}h^a_\nu(x,\bphi(x)=\p_{x^\nu}h^a_\mu(x,\bphi(x)$, 
$\mu,\nu\in\bm$, $a\in\ba$, for any solution $\bphi(x)$ of the system. 
This condition is valid due to assumed equalities 
$D_\mu h^a_\nu-D_\nu h^a_\mu=0$ and the chain rule: 
\begin{equation*} 
	\p_{x^\mu}f(x,\bphi(x))=\big(D_\mu f(x,\bu)\big)\big|_{\bu=\bphi(x)}
\end{equation*}
for all $\mu\in\bm$, $f\in\cA$ and all solutions $\bphi(x)$ of the defining system.
\end{exa}

Now, again let $(\cA,\fD_H)$ be a regular differential algebra.

\begin{prop} 
There is defined the ascending filtration 
\begin{equation*} 
	0\subset\cA^{(0)}_H=\cA_\cD\subset\cA^{(1)}_H\subset\dots
	\subset\cA^{(q)}_H\subset\cA^{(q+1)}_H\subset\cdots
\end{equation*}
of $\fM_\cD$-modules, where  
	$\cA^{(q)}_H=\big\{f\in\cA \ \big| \ D_\mu f\in\cA^{(q-1)}_H, 
	\ \mu\in\bm\big\}$, $q\in\bbN$. 
In particular, 
$\cA^{(p)}_H\cdot\cA^{(q)}_H\subset\cA^{(p+q)}_H$, $p,q\in\bbZ_+$.
\end{prop}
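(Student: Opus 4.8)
The plan is to prove the four assertions packed into the statement --- that each $\cA^{(q)}_H$ is an $\fM_\cD$-submodule of $\cA$, that $\cA^{(q)}_H\subset\cA^{(q+1)}_H$, that the filtration starts with $\cA^{(0)}_H=\cA_\cD$, and that $\cA^{(p)}_H\cdot\cA^{(q)}_H\subset\cA^{(p+q)}_H$ --- by a joint induction, resting on the two elementary identities
\begin{equation*}
	D_\mu(Rf)=R\,D_\mu f+[D_\mu,R]\,f,\qquad
	D_\mu(fg)=(D_\mu f)\,g+f\,(D_\mu g),
\end{equation*}
valid for $R\in\fM$, $f,g\in\cA$, $\mu\in\bm$ (the first because $R$ is an $\cA$-linear endomorphism, the second because $D_\mu$ is a differentiation), combined with the observation that $D_\mu\in\cD$, so that $[D_\mu,R]=0$ as soon as $R\in\fM_\cD$. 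It is convenient to set $\cA^{(-1)}_H:=0$; then $\cA^{(0)}_H=\cA_\cD=\{f\in\cA\mid D_\mu f=0,\ \mu\in\bm\}$ also satisfies the recursion $\cA^{(q)}_H=\{f\mid D_\mu f\in\cA^{(q-1)}_H,\ \mu\in\bm\}$, so the whole family is defined uniformly for $q\in\bbZ_+$. For $q=0$ the needed facts are already recorded in the excerpt: $\cA_\cD$ is a subalgebra of $\cA$ and a submodule of the $\fM_\cD$-module $\cA$; and $D_\mu f=0$ for $f\in\cA_\cD$ gives $\cA^{(0)}_H\subset\cA^{(1)}_H$ as well as the base case $\cA^{(0)}_H\cdot\cA^{(0)}_H\subset\cA^{(0)}_H$ of the product inclusion.

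Assuming the submodule property and the ascending property for all indices less than $q$, I would first check that $\cA^{(q)}_H$ is an $\fM_\cD$-submodule: $\bbF$-linearity of the $D_\mu$ makes $\cA^{(q)}_H$ a linear subspace, and for $f\in\cA^{(q)}_H$, $R\in\fM_\cD$ the first identity gives $D_\mu(Rf)=R\,D_\mu f$, where $D_\mu f\in\cA^{(q-1)}_H$ and $\cA^{(q-1)}_H$ is an $\fM_\cD$-submodule by the inductive hypothesis, so $R\,D_\mu f\in\cA^{(q-1)}_H$, whence $Rf\in\cA^{(q)}_H$. The ascending property at level $q$ follows at once: $f\in\cA^{(q)}_H$ means $D_\mu f\in\cA^{(q-1)}_H\subset\cA^{(q)}_H$ (inductive hypothesis applied at level $q-1$), i.e.\ $f\in\cA^{(q+1)}_H$.

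For the product inclusion I would induct on $n=p+q$, the base $n=0$ being the subalgebra property $\cA_\cD\cdot\cA_\cD\subset\cA_\cD$. For $n\ge1$, take $f\in\cA^{(p)}_H$, $g\in\cA^{(q)}_H$; the Leibniz identity gives $D_\mu(fg)=(D_\mu f)\,g+f\,(D_\mu g)$ with $D_\mu f\in\cA^{(p-1)}_H$ and $D_\mu g\in\cA^{(q-1)}_H$ (using the convention $\cA^{(-1)}_H=0$ when $p=0$ or $q=0$). By the inductive hypothesis for the smaller sums $(p-1)+q=n-1$ and $p+(q-1)=n-1$, both summands lie in $\cA^{(n-1)}_H$, hence $D_\mu(fg)\in\cA^{(n-1)}_H$ and therefore $fg\in\cA^{(n)}_H=\cA^{(p+q)}_H$. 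I do not anticipate a genuine obstacle; the only delicate points are the bookkeeping at the boundary values $p=0$ or $q=0$, which the convention $\cA^{(-1)}_H=0$ handles uniformly, and the fact that the $\fM_\cD$-action restricts to each filtration term, which is exactly the content of the vanishing $[D_\mu,R]=0$ in the definition of a $\cD$-constant multiplicator.
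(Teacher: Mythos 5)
Your proof is correct: the paper states this proposition without an explicit proof, and your argument is exactly the routine verification it leaves implicit --- the Leibniz rule for the differentiations $D_\mu\in\cD$, the vanishing $[D_\mu,R]=0$ for $R\in\fM_\cD$, and induction on $q$ (respectively on $p+q$), with the convention $\cA^{(-1)}_H=0$ making the base case $\cA^{(0)}_H=\cA_\cD$ uniform. Nothing is missing; the boundary cases $p=0$ or $q=0$ are handled correctly by that convention.
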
 

\begin{prop} 
There is defined the ascending filtration 
\begin{equation*} 
	0\subset\fM^{(0)}_H=\fM_\cD\subset\fM^{(1)}_H\subset\dots
	\subset\fM^{(q)}_H\subset\fM^{(q+1)}_H\subset\cdots 
\end{equation*} 
of $\fM_\cD$-modules, where 
	$\fM^{(q)}_H=\big\{R\in\fM \ \big| \ D_\mu(R)\in\fM^{(q-1)}_H, 
	\ \mu\in\bm\big\}$, $q\in\bbN$. 
In particular, 
$\fM^{(p)}_H\com\fM^{(q)}_H\subset\fM^{(p+q)}_H$, $p,q\in\bbZ_+$.
\end{prop}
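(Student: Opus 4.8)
The plan is to reproduce, almost verbatim, the argument used for the two earlier propositions that set up the analogous filtrations $\fD^{(q)}_\cD\subset\fD$ and $\cA^{(q)}_H\subset\cA$; the only genuinely new input is the way the horizontal fields act on multiplicators, and essentially all the mathematical content sits in recording two elementary facts about that action. For $R\in\fM$ and $\mu\in\bm$ set $D_\mu(R)=[D_\mu,R]=D_\mu\com R-R\com D_\mu\in\End_\bbF(\cA)$. First I would check that $D_\mu(R)$ is again $\cA$-linear, hence lies in $\fM$: for $f,g\in\cA$,
\begin{equation*}
  [D_\mu,R](fg)=D_\mu\big(f\cdot Rg\big)-R\big((D_\mu f)\cdot g+f\cdot D_\mu g\big)
  =f\cdot\big(D_\mu(Rg)-R(D_\mu g)\big)=f\cdot[D_\mu,R](g),
\end{equation*}
using the Leibniz rule for $D_\mu$ and the $\cA$-linearity of $R$. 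Thus $R\mapsto D_\mu(R)$ is a well-defined $\bbF$-linear self-map of $\fM$, and the sets $\fM^{(q)}_H$ are meaningful by induction on $q$ (with $\fM^{(0)}_H=\fM_\cD$ as declared, since $D_\mu(R)\in\fM^{(q-1)}_H$ is legitimate once $\fM^{(q-1)}_H\subset\fM$ is known). Secondly, this map is a derivation of the associative algebra $(\fM,\com)$,
\begin{equation*}
  D_\mu(R\com S)=[D_\mu,R\com S]=[D_\mu,R]\com S+R\com[D_\mu,S]=D_\mu(R)\com S+R\com D_\mu(S),
\end{equation*}
which is just the commutator identity holding in any associative algebra.

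Next I would prove, by induction on $q\in\bbZ_+$, that $\fM^{(q)}_H$ is an $\fM_\cD$-submodule of $\fM$ (viewed as a bimodule over its subalgebra $\fM_\cD$ via $\com$) and that $\fM^{(q)}_H\subset\fM^{(q+1)}_H$. The case $q=0$ is exactly the already-stated fact that $\fM_\cD$ is a unital subalgebra of $\fM$, together with $\fM_\cD\subset\fM^{(1)}_H$ because $R\in\fM_\cD$ forces $D_\mu(R)=0\in\fM_\cD=\fM^{(0)}_H$. For the step, assume the assertion for $q-1$; if $R,R'\in\fM^{(q)}_H$ and $S\in\fM_\cD$, then $D_\mu(R+R')=D_\mu(R)+D_\mu(R')\in\fM^{(q-1)}_H$ and, by the derivation property and $D_\mu(S)=0$,
\begin{equation*}
  D_\mu(S\com R)=S\com D_\mu(R)\in\fM_\cD\com\fM^{(q-1)}_H\subset\fM^{(q-1)}_H,\qquad
  D_\mu(R\com S)=D_\mu(R)\com S\in\fM^{(q-1)}_H\com\fM_\cD\subset\fM^{(q-1)}_H,
\end{equation*}
the last inclusions by the inductive hypothesis (that $\fM^{(q-1)}_H$ is an $\fM_\cD$-subbimodule); hence $R+R',\ S\com R,\ R\com S\in\fM^{(q)}_H$. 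Finally, for $R\in\fM^{(q)}_H$ one has $D_\mu(R)\in\fM^{(q-1)}_H\subset\fM^{(q)}_H$, i.e. $R\in\fM^{(q+1)}_H$, which gives the ascending chain.

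It then remains to establish $\fM^{(p)}_H\com\fM^{(q)}_H\subset\fM^{(p+q)}_H$ for $p,q\in\bbZ_+$, and I would do this by induction on $p+q$. For $p=q=0$ it is once more the subalgebra property of $\fM_\cD$. Let $p+q\ge1$ and take $R\in\fM^{(p)}_H$, $R'\in\fM^{(q)}_H$. If $p=0$ then $q\ge1$ and $D_\mu(R\com R')=R\com D_\mu(R')\in\fM_\cD\com\fM^{(q-1)}_H\subset\fM^{(q-1)}_H$; the case $q=0$ is symmetric. If $p,q\ge1$, the derivation property gives
\begin{equation*}
  D_\mu(R\com R')=D_\mu(R)\com R'+R\com D_\mu(R'),\qquad
  D_\mu(R)\in\fM^{(p-1)}_H,\quad D_\mu(R')\in\fM^{(q-1)}_H,
\end{equation*}
so by the inductive hypothesis both summands lie in $\fM^{(p+q-1)}_H$. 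In every case $D_\mu(R\com R')\in\fM^{(p+q-1)}_H$ for all $\mu\in\bm$, i.e. $R\com R'\in\fM^{(p+q)}_H$, completing the induction. I do not expect a real obstacle anywhere: the whole proof is formal once the $\cA$-linearity of $[D_\mu,R]$ and the Leibniz identity for $D_\mu(\cdot)$ on $(\fM,\com)$ are in hand, and the only point requiring attention is keeping the nested inductions and the boundary cases $p=0$ or $q=0$ straight, precisely as in the proof of the preceding proposition.
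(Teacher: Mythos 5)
Your argument is correct: the only real inputs are that $R\mapsto[D_\mu,R]$ maps $\fM$ to $\fM$ and is a derivation of $(\fM,\com)$, after which the nested inductions on $q$ and on $p+q$ go through exactly as you carry them out. The paper itself states this proposition without proof (treating it as routine, parallel to the preceding filtrations and the material of \cite{Z},\cite{Z0}), and your write-up is precisely the standard argument that is being left implicit, so there is nothing to correct.
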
 

\begin{prop}\label{PE} 
There is defined the ascending filtration 
\begin{equation*}
	0=\cE^{(-1)}\subset\cE=\cE^{(0)}\subset\dots\subset
	\cE^{(q)}\subset\cE^{(q+1)}\subset\cdots
\end{equation*}
of Lie $\fM_\cD$-algebras, where 
	$\cE^{(q)}=\big\{X\in\fD_V \ \big| \ [D_\mu,X]\in\cE^{(q-1)}, \ 	
	\mu\in\bm\big\}$, $q\in\bbN$. 
Moreover, 
\begin{itemize} 
	\item
		$[\cE^{(p)},\cE^{(q)}]\subset\cE^{(p+q)}$, $p,q\in\bbZ_+$; 
	\item 
		$\fD^{(q)}_\cD=\cE^{(q)}\oplus_{\fM_\cD}\cD$, $q\in\bbZ_+$.
\end{itemize}
\end{prop}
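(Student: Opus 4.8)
The plan is to reduce the whole statement to the single identity
\[
\cE^{(q)}=\fD^{(q)}_\cD\cap\fD_V,\qquad q\in\bbZ_+,
\]
proved by induction on $q$; everything else is then formal. Granting this identity, the decomposition $\fD^{(q)}_\cD=\cE^{(q)}\oplus_{\fM_\cD}\cD$ holds because $\cD=\fD^{(-1)}_\cD\subseteq\fD^{(q)}_\cD$ and $\cE^{(q)}\subseteq\fD^{(q)}_\cD$, the sum being direct since $\fD_V\cap\fD_H=0$ and a sum of $\fM_\cD$-submodules since $\fD^{(q)}_\cD$ is an $\fM_\cD$-module (by the earlier proposition on the filtration $\{\fD^{(q)}_\cD\}$) and $\fD_V$ an $\fM$-module; intersecting the ascending chain $\fD^{(q-1)}_\cD\subseteq\fD^{(q)}_\cD$ with $\fD_V$ gives $\cE^{(q-1)}\subseteq\cE^{(q)}$; each $\cE^{(q)}$ is an $\fM_\cD$-submodule of $\fD_V$ as an intersection of $\fM_\cD$-submodules; and, since $\fD_V$ is a Lie subalgebra and $[\fD^{(p)}_\cD,\fD^{(q)}_\cD]\subseteq\fD^{(p+q)}_\cD$, one gets
\[
[\cE^{(p)},\cE^{(q)}]\subseteq[\fD^{(p)}_\cD,\fD^{(q)}_\cD]\cap[\fD_V,\fD_V]\subseteq\fD^{(p+q)}_\cD\cap\fD_V=\cE^{(p+q)},
\]
so that $\cE=\cE^{(0)}$ is a Lie $\fM_\cD$-subalgebra of $\fD_V$ and the chain is a filtration of the Lie $\fM_\cD$-algebra $\bigcup_q\cE^{(q)}$.

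For the inductive identity I would use only what regularity (Definition~\ref{D6}) provides: the splitting $\fD=\fD_V\oplus_\fM\fD_H$ with $\fD_H=\cD$; that $\ad_{D_\mu}=[D_\mu,\cdot\,]$ carries $\fD_V$ into $\fD_V$ (namely $[D_\mu,X]=\na_\mu X$ for $X\in\fD_V$); and that $\cD$ is a Lie subalgebra, so $[D_\mu,\cD]\subseteq\cD$. I will use $\fD^{(q)}_\cD$ in the form $X\in\fD^{(q)}_\cD\iff[D_\mu,X]\in\fD^{(q-1)}_\cD$ for all $\mu\in\bm$ (for $q=0$ this is exactly the Lie--B\"acklund condition $[\cD,X]\subseteq\cD$, $\cD$ being an $\fM$-module), so that the defining conditions of $\fD^{(q)}_\cD$ and of $\cE^{(q)}$ are of the same shape and are tested on the same generators $D_\mu$. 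The base case $q=-1$ is trivial: $\fD^{(-1)}_\cD\cap\fD_V=\cD\cap\fD_V=0=\cE^{(-1)}$.

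For the inductive step, assume $\fD^{(q-1)}_\cD=\cE^{(q-1)}\oplus_{\fM_\cD}\cD$. If $X\in\fD^{(q)}_\cD$, split $X=X_V+X_H$ with $X_V\in\fD_V$, $X_H\in\cD$. For each $\mu$ we have $[D_\mu,X]\in\fD^{(q-1)}_\cD=\cE^{(q-1)}\oplus_{\fM_\cD}\cD$, while $[D_\mu,X]=[D_\mu,X_V]+[D_\mu,X_H]$ with $[D_\mu,X_V]\in\fD_V$ and $[D_\mu,X_H]\in\cD$; by uniqueness of the decomposition $\fD=\fD_V\oplus_\fM\fD_H$, $[D_\mu,X_V]$ is the $\fD_V$-component of $[D_\mu,X]$ and hence lies in $\cE^{(q-1)}$, the $\fD_V$-part of $\cE^{(q-1)}\oplus_{\fM_\cD}\cD$. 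Thus $X_V\in\cE^{(q)}$ and $X\in\cE^{(q)}\oplus_{\fM_\cD}\cD$. Conversely $\cD=\fD^{(-1)}_\cD\subseteq\fD^{(q)}_\cD$, and for $X\in\cE^{(q)}$ we have $[D_\mu,X]\in\cE^{(q-1)}\subseteq\fD^{(q-1)}_\cD$ for all $\mu$, so $X\in\fD^{(q)}_\cD$; with $\fD_V\cap\fD_H=0$ this gives $\fD^{(q)}_\cD\cap\fD_V=\cE^{(q)}$, closing the induction. (The case $q=0$ is this step applied with $q-1=-1$, and the computation recovers $\fD^{(0)}_\cD=\fD_\cD$, as it must.)

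The step carrying the actual content — the one I would treat most carefully — is the component matching in the inductive step: it rests squarely on the regularity assumption that $\ad_{D_\mu}$ preserves the splitting $\fD=\fD_V\oplus_\fM\fD_H$, so that the vertical part of $\fD^{(q-1)}_\cD$ is precisely $\cE^{(q-1)}$ and membership in $\cE^{(q)}$ may be tested against the generators $D_\mu$ alone; if $\ad_{D_\mu}$ leaked out of $\fD_V$, or if the condition defining $\fD^{(q)}_\cD$ had to be checked against all of $\cD$ rather than its generators, the two filtrations would not interlock. If one prefers a self-contained proof of $[\cE^{(p)},\cE^{(q)}]\subseteq\cE^{(p+q)}$ instead of the formal deduction above, it is a routine double induction on $p+q$ using $[D_\mu,[X,Y]]=[[D_\mu,X],Y]+[X,[D_\mu,Y]]$ together with $[\cE^{(-1)},\,\cdot\,]=0$.
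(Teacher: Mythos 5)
Your reduction to the identity $\cE^{(q)}=\fD^{(q)}_\cD\cap\fD_V$ is natural, and one half of it is argued correctly: the inclusion $\fD^{(q)}_\cD\subseteq\cE^{(q)}\oplus_{\fM_\cD}\cD$ uses only $[D_\mu,X]\in\fD^{(q-1)}_\cD$ (immediate, since $D_\mu\in\cD$) together with the regularity of the splitting, and your component-matching there is fine. The genuine gap is in the converse inclusion $\cE^{(q)}\subseteq\fD^{(q)}_\cD$, which you base on the asserted equivalence ``$X\in\fD^{(q)}_\cD\iff[D_\mu,X]\in\fD^{(q-1)}_\cD$ for all $\mu\in\bm$''. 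Your only justification, ``$\cD$ being an $\fM$-module'', covers exactly $q=0$: there the target of the test is $\cD$ itself, which is closed under multiplication by $\fM$, so in $[f^\mu D_\mu,X]=f^\mu[D_\mu,X]-(Xf^\mu)D_\mu$ both terms land in $\cD$. For $q\ge1$ the target is $\fD^{(q-1)}_\cD$, which is only an $\fM_\cD$-module, and the term $f^\mu[D_\mu,X]$ with arbitrary $f^\mu\in\fM$ has no reason to stay in it. This is not a removable technicality: in the paper's own jet example with $m=1$, let $X_\phi$ be an evolutionary field (so $X_\phi\in\cE^{(0)}$, $[D_x,X_\phi]=0$) and $W=x\cdot X_\phi$. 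Then $[D_x,W]=X_\phi\in\cE^{(0)}$, so $W\in\cE^{(1)}$; but $[xD_x,W]=x\,X_\phi$ (since $Wx=0$), and $[D_x,x\,X_\phi]=X_\phi$ is vertical and nonzero, hence not in $\cD$, so $x\,X_\phi\notin\fD_\cD$ and $W\notin\fD^{(1)}_\cD$ when $\fD^{(1)}_\cD$ is read literally as $\{X\mid[\cD,X]\subseteq\fD^{(0)}_\cD\}$ with $\cD$ the full $\fM$-span of the $D_\mu$. So the equivalence you rely on is false in one direction for $q\ge1$, and with it your ``Conversely\dots'' step collapses; the decomposition $\fD^{(q)}_\cD=\cE^{(q)}\oplus_{\fM_\cD}\cD$ can only be reached by reading the filtration $\fD^{(q)}_\cD$ as defined by the generator test (a matter of definition, which the statement of the earlier proposition does not make explicit), not by proving the two readings agree, as your write-up does.

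What survives independently of this: the ascending chain $\cE^{(q-1)}\subseteq\cE^{(q)}$, the $\fM_\cD$-module structure of each $\cE^{(q)}$, and the bullet $[\cE^{(p)},\cE^{(q)}]\subseteq\cE^{(p+q)}$ can all be obtained by the direct induction on $p+q$ via the Jacobi identity that you sketch in your final sentence, without invoking $\fD^{(q)}_\cD$ at all; that route is sound and is the safer backbone for the first part of the proposition. The paper itself states this proposition without proof, so there is no argument to compare against; but as written, your proposal establishes only the inclusion $\fD^{(q)}_\cD\subseteq\cE^{(q)}\oplus_{\fM_\cD}\cD$, and the step you yourself flag as carrying the content is exactly the one that fails under the stated definitions.
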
 

\section{Differential modules.} 
Let $(\cA,\cD)$ be a differential algebra, $\cM$  be an $\cA$-module, 
$\bfM=\fM(\cM)$ be the algebra of all multiplicators of the $\cA$-module $\cM$, 
$\bfD=\fD(\cM)$ be the Lie $\bfM$-algebra of all differentiations of the 
$\cA$-module $\cM$. 

\begin{defi} 
A {\it differential module over a differential algebra} $(\cA,\cD)$ 
(an $(\cA,\cD)$-module) is a triple $(\cM,\ka,\bcD)$, 
where 
\begin{itemize} 
	\item 
		$\cM$ is an $\cA$-module; 
	\item 
		a mapping $\ka\in\Hom_{\Lie\cap\cA}(\fD;\bfD)$, 
		in particular, $\ka[X,Y]=[\ka X,\ka Y]$ for all $X,Y\in\fD$; 
	\item 
		$\bcD$ is the {\it Cartan subalgebra} of the Lie $\bfM$-algebra $\bfD$; 
	\item 
		the matching condition   $\ka\big|_{\cD} : \cD\to\bcD$.
\end{itemize}
\end{defi} 

Let $(\cM,\ka,\bcD)$ be a differential module. 

\begin{defi} 
A submodule $\cN$ of the $\cA$-module $\cM$ is called {\it differential} 
if $\bcD\cN\subset\cN$.
In this case the pair $(\cN,\bcD)$ is a differential module.
\end{defi}

\begin{defi} 
The element $M\in\cM$ is called {\it $\bcD$-constant} if $\bcD M=0$.
\end{defi} 

Let $\cM_{\bcD}$ be the set of all $\bcD$-constant elements of the $\cA$-module $\cM$. 

\begin{defi} 
The multiplicator $\bR\in\bfM$ is called {\it $\bcD$-constant} 
if $\bcD(\bR)=[\bcD,\bR]=0$.
\end{defi} 

Let $\bfM_{\bcD}$ be the set of all $\bcD$-constant elements of the algebra $\bfM$.
Clear, 
\begin{itemize}
	\item
		$\fM_{\bcD}$ is an unital subalgebra of the algebra $\bfM$; 
	\item 
		$\cM_{\bcD}$ is a submodule of the $\fM_{\bcD}$-module $\cM$.  
\end{itemize}

\begin{defi} 
A differentiation $\bX\in\bfD$ is called a {\it Lie-B\"aclund} 
differentiation if $[\bcD,\bX]\subset\bcD$. 
\end{defi}
Let $\bfD_{\bcD}$ be the set of all Lie-B\"aclund differentiations of the differential 
module $(\cM,\bcD)$. 
Clear, 
\begin{itemize} 
	\item 
		$\bfD_{\bcD}$ is a subalgebra of the Lie $\bfM_{\bcD}$-algebra $\bfD$. 
\end{itemize}

\begin{prop} 
There is defined the ascending filtration 
\begin{equation*}
	\bcD=\bfD^{(-1)}_{\bcD}\subset\bfD_{\bcD}=\bfD^{(0)}_{\bcD}\subset\dots 
	\subset\bfD^{(q)}_{\bcD}\subset\bfD^{(q+1)}_{\bcD}\subset\cdots
\end{equation*}
of Lie $\bfM_{\bcD}$-algebras, where 
	$\bfD^{(q)}_{\bcD}=\big\{\bX\in\bfD \ \big| \ [\bcD,\bX]
	\subset\bfD^{(q-1)}_{\bcD}\big\}$, $q\in\bbZ_+$. 
Moreover, 
\begin{itemize} 
	\item 
		$\bcD$ is an ideal of the Lie $\bfM_{\bcD}$-algebra $\bfD_{\bcD}$; 
	\item 
		$[\bfD^{(p)}_{\bcD},\bfD^{(q)}_{\bcD}]\subset\bfD^{(p+q)}_{\bcD}$, 
		$p,q\in\bbZ_+$. 
\end{itemize}
\end{prop}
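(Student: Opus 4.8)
The plan is to carry over, essentially verbatim, the argument behind the analogous Proposition for the differential algebra $(\cA,\cD)$, the one yielding the filtration $\cD=\fD^{(-1)}_\cD\subset\fD_\cD=\fD^{(0)}_\cD\subset\cdots$: the triple $(\cM,\ka,\bcD)$ carries precisely the structure used there, with $\bfM,\bfD,\bcD$ in the roles of $\fM,\fD,\cD$. The structural facts I will use (recorded above, or coming from the framework of \cite{Z},\cite{Z0}) are: $\bfD$ is a Lie $\bfM$-algebra, i.e. an $\bfM$-module with a Lie bracket subject to the matching identity $[\bY,\bR\cdot\bX]=\bY(\bR)\cdot\bX+\bR\cdot[\bY,\bX]$, where $\bY(\bR)=[\bY,\bR]$; the Cartan subalgebra $\bcD$ is at once a Lie subalgebra and an $\bfM$-submodule of $\bfD$; and $\bfM_{\bcD}$ is a unital subalgebra of $\bfM$ with $[\bcD,\bR]=0$ for all $\bR\in\bfM_{\bcD}$. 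The sets $\bfD^{(q)}_{\bcD}$ are defined recursively by $\bfD^{(-1)}_{\bcD}=\bcD$, $\bfD^{(0)}_{\bcD}=\bfD_{\bcD}$ and the displayed formula for $q\ge 1$; for $q=0$ the formula reproduces the Lie-B\"aclund condition $[\bcD,\bX]\subset\bcD$, so the indexing is consistent.

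First I would establish the inclusions $\bfD^{(q-1)}_{\bcD}\subset\bfD^{(q)}_{\bcD}$ by induction on $q$: the base case $\bfD^{(-1)}_{\bcD}=\bcD\subset\bfD_{\bcD}$ is just $[\bcD,\bcD]\subset\bcD$, while if $\bfD^{(q-2)}_{\bcD}\subset\bfD^{(q-1)}_{\bcD}$ then any $\bX\in\bfD^{(q-1)}_{\bcD}$ has $[\bcD,\bX]\subset\bfD^{(q-2)}_{\bcD}\subset\bfD^{(q-1)}_{\bcD}$, so $\bX\in\bfD^{(q)}_{\bcD}$. Then I would show each $\bfD^{(q)}_{\bcD}$ is an $\bfM_{\bcD}$-submodule of $\bfD$, again by induction on $q$, the base case $q=-1$ being that $\bcD$ is an $\bfM$-submodule and hence an $\bfM_{\bcD}$-submodule: for $\bR\in\bfM_{\bcD}$, $\bX\in\bfD^{(q)}_{\bcD}$ and $\bY\in\bcD$, the matching identity and $\bY(\bR)=[\bY,\bR]=0$ give $[\bY,\bR\cdot\bX]=\bR\cdot[\bY,\bX]$; since $[\bY,\bX]\in\bfD^{(q-1)}_{\bcD}$, which is an $\bfM_{\bcD}$-module by the inductive hypothesis, this lies in $\bfD^{(q-1)}_{\bcD}$, whence $[\bcD,\bR\cdot\bX]\subset\bfD^{(q-1)}_{\bcD}$ and $\bR\cdot\bX\in\bfD^{(q)}_{\bcD}$.

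Next I would prove $[\bfD^{(p)}_{\bcD},\bfD^{(q)}_{\bcD}]\subset\bfD^{(p+q)}_{\bcD}$ for $p,q\in\bbZ_+$ by induction on $p+q$, the base case $p=q=0$ being the already-recorded fact that the Lie-B\"aclund differentiations $\bfD_{\bcD}$ form a subalgebra. For the step, take $\bX_1\in\bfD^{(p)}_{\bcD}$, $\bX_2\in\bfD^{(q)}_{\bcD}$, $\bY\in\bcD$ and use the Jacobi identity $[\bY,[\bX_1,\bX_2]]=[[\bY,\bX_1],\bX_2]+[\bX_1,[\bY,\bX_2]]$: one has $[\bY,\bX_1]\in\bfD^{(p-1)}_{\bcD}$ and $[\bY,\bX_2]\in\bfD^{(q-1)}_{\bcD}$ (reading $\bfD^{(-1)}_{\bcD}=\bcD$ if an index hits $-1$), so each term on the right sits in $\bfD^{(p+q-1)}_{\bcD}$ --- by the inductive hypothesis applied to the smaller sum when the relevant index is $\ge 1$, and straight from the definition ($[\bcD,\bfD^{(r)}_{\bcD}]\subset\bfD^{(r-1)}_{\bcD}$, valid for all $r\ge 0$) when it is $0$. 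Hence $[\bY,[\bX_1,\bX_2]]\in\bfD^{(p+q-1)}_{\bcD}$, i.e. $[\bX_1,\bX_2]\in\bfD^{(p+q)}_{\bcD}$. Finally, $\bcD$ is an ideal of the Lie $\bfM_{\bcD}$-algebra $\bfD_{\bcD}$: it is a Lie $\bfM_{\bcD}$-subalgebra (bracket-closed as a Cartan subalgebra, and stable under $\bfM_{\bcD}\subset\bfM$), and $[\bfD_{\bcD},\bcD]\subset\bcD$ is exactly the condition defining $\bfD_{\bcD}=\bfD^{(0)}_{\bcD}$.

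There is no genuine obstacle: the whole argument runs on the matching identity and the Jacobi identity in $\bfD$, both built into the notion of the Lie $\bfM$-algebra of differentiations of the $\cA$-module $\cM$. The two points needing mild care are bookkeeping the index $-1$ in the double induction (remembering that $[\bcD,\bfD^{(r)}_{\bcD}]\subset\bfD^{(r-1)}_{\bcD}$ holds for every $r\ge 0$ by definition), and reading ``ascending filtration of Lie $\bfM_{\bcD}$-algebras'' correctly: it means an ascending filtration by $\bfM_{\bcD}$-submodules compatible with the bracket in the filtered sense $[\bfD^{(p)}_{\bcD},\bfD^{(q)}_{\bcD}]\subset\bfD^{(p+q)}_{\bcD}$, the terms $\bfD^{(q)}_{\bcD}$ with $q\ge 1$ not being claimed to be closed under the bracket themselves.
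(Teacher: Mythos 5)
Your proof is correct, and it is exactly the argument the paper intends: the proposition is stated without proof as the verbatim module analogue of the algebra-case filtration of $\fD_\cD$, and the standard justification is precisely your double induction using the matching identity $[\bY,\bR\cdot\bX]=[\bY,\bR]\cdot\bX+\bR\cdot[\bY,\bX]$ (with $[\bY,\bR]=0$ for $\bR\in\bfM_{\bcD}$) together with the Jacobi identity, with the index $-1$ handled via $[\bcD,\bfD^{(r)}_{\bcD}]\subset\bfD^{(r-1)}_{\bcD}$ for all $r\ge 0$. Your reading of ``filtration of Lie $\bfM_{\bcD}$-algebras'' (terms are $\bfM_{\bcD}$-submodules, bracket-compatible in the filtered sense, with only $\bfD^{(0)}_{\bcD}$ claimed to be a subalgebra and $\bcD$ an ideal in it) also matches the paper's usage.
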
 

\begin{defi} 
Let $(\cA,\cD)$ be a regular differential algebra with a vertical $\fM$-basis 
$\p=\{\p_a\mid a\in\ba\}$ and a horizontal $\fM$-basis $D=\{D_\mu\mid \mu\in\bm\}$.
A differential module $(\cM,\ka,\bcD)$ is called {\it regular} if: 
\begin{itemize} 
	\item 
		the Lie $\bfM$-algebra $\bfD$ is splitted into {\it vertical} and 
		{\it horizontal} subalgebras, 
		$\bfD=\bfD_V\oplus_{\bfM}\bfD_H$; 
	\item 
		the vertical subalgebra $\bfD_V$ has a $\bfM$-basis 
		$\bp=\{\bp_s=(\na_s,\p_s)\in\bfD_V\mid \p_s\in\p, \ s\in\ba_\cM\}$, 
		$[\bp_s,\bp_t]=0$, $s,t\in\ba_\cM$; 
	\item 
		the horizontal subalgebra $\bfD_H=\bcD$ has a $\bfM$-basis 
		$\bD=\{\bD_\si=(\na_\si,D_\si)\in\bfD_H\mid 
		D_\si\in D, \ \si\in\bm_\cM\}$, 
		$[\bD_\si,\bD_\tau]=0$, $\si,\tau\in\bm_\cM$; 
	\item 
		the commutators $[\bD_\si,\bp_s]=\bGa^t_{\si s}\bp_t\in\bfD_V$, 
		$\bGa^t_{\si s}=(\De^t_{\si s},\Ga^t_{\si s})\in\bfM$, 
		in particular 
		$[\bD_\si,\bX]=\bna_\si\bX=(\bna_\si\bX)^s\bp_s\in\bfD_V$ 
		for all $\bX=\bX^s\bp_s\in\bfD_V$, $\bX^s\in\bfM$, where 
		$(\bna_\si\bX)^s=\bD_\si\bX^s+\bGa^s_{\si t}\bX^t$.
\end{itemize}
\end{defi} 

Let $(\cM,\ka,\bfD_H)$ be a regular differential module. 

\begin{prop} 
There is defined the ascending filtration 
\begin{equation*} 
	0\subset\cM^{(0)}_H=\cM_{\bcD}\subset\cM^{(1)}_H\subset\dots
	\subset\cM^{(q)}_H\subset\cA^{(q+1)}_H\subset\cdots
\end{equation*}
of $\bfM_{\bcD}$-modules, where  
	$\cM^{(q)}_H=\big\{M\in\cM \ \big| \ \bD_{\mu'} M\in\cM^{(q-1)}_H, 
	\ \mu'\in\bm'\big\}$, $q\in\bbN$. 
\end{prop}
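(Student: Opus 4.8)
\emph{Proof plan.} The plan is to imitate the argument establishing the filtration $\{\cA^{(q)}_H\}$ of the regular differential algebra $(\cA,\fD_H)$, running an induction on $q$. First I would note that the recursion is well posed: $\cM^{(0)}_H=\cM_{\bcD}$ is given outright, and for $q\in\bbN$ the set $\cM^{(q)}_H=\{M\in\cM\mid\bD_{\mu'}M\in\cM^{(q-1)}_H,\ \mu'\in\bm'\}$ refers only to the previously constructed $\cM^{(q-1)}_H$, so all the $\cM^{(q)}_H$ are unambiguously defined. There are then two claims to verify for every $q\in\bbZ_+$: (i) $\cM^{(q)}_H$ is an $\bfM_{\bcD}$-submodule of $\cM$, and (ii) $\cM^{(q)}_H\subset\cM^{(q+1)}_H$.

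For (i), the base case $q=0$ is precisely the fact already recorded above, that $\cM_{\bcD}$ is a submodule of the $\bfM_{\bcD}$-module $\cM$. For the inductive step, assume $\cM^{(q-1)}_H$ is an $\bfM_{\bcD}$-submodule. Each $\bD_{\mu'}\in\bcD\subset\bfD$ is a differentiation of the $\cA$-module $\cM$, hence an additive, $\bbF$-linear self-map of $\cM$; therefore $\cM^{(q)}_H=\bigcap_{\mu'\in\bm'}\bD_{\mu'}^{-1}(\cM^{(q-1)}_H)$ is an $\bbF$-subspace of $\cM$. For closure under $\bfM_{\bcD}$, the key identity is the Leibniz rule for a $\bcD$-constant multiplicator: if $\bR\in\bfM_{\bcD}$ then $[\bD_{\mu'},\bR]=0$, so $\bD_{\mu'}(\bR M)=\bR(\bD_{\mu'}M)$ for all $M\in\cM$. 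Hence for $M\in\cM^{(q)}_H$ and $\bR\in\bfM_{\bcD}$ one gets $\bD_{\mu'}(\bR M)=\bR(\bD_{\mu'}M)\in\bfM_{\bcD}\cdot\cM^{(q-1)}_H\subset\cM^{(q-1)}_H$ for each $\mu'$, i.e. $\bR M\in\cM^{(q)}_H$. This proves (i).

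For (ii) I would again induct on $q$. When $q=0$: if $M\in\cM^{(0)}_H=\cM_{\bcD}$ then $\bD_{\mu'}M=0\in\cM_{\bcD}=\cM^{(0)}_H$ (as $\bD_{\mu'}\in\bcD$), so $M\in\cM^{(1)}_H$. For the step, suppose $\cM^{(q-1)}_H\subset\cM^{(q)}_H$ and let $M\in\cM^{(q)}_H$; then $\bD_{\mu'}M\in\cM^{(q-1)}_H\subset\cM^{(q)}_H$ for each $\mu'$, so $M\in\cM^{(q+1)}_H$ by definition. Together with (i) this produces the asserted ascending chain $0\subset\cM^{(0)}_H=\cM_{\bcD}\subset\cM^{(1)}_H\subset\cdots$ of $\bfM_{\bcD}$-modules.

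I do not anticipate a real obstacle; the one place needing care is the Leibniz identity $\bD_{\mu'}(\bR M)=\bR(\bD_{\mu'}M)$ for $\bR\in\bfM_{\bcD}$ — the fact that a $\bcD$-constant multiplicator commutes with the horizontal basis differentiations — since this is exactly what propagates the submodule property up the filtration, in parallel with the role of $\fM_\cD$-linearity in the algebra case. One should also keep in mind the additivity and $\bbF$-linearity of the $\bD_{\mu'}$, so that each $\cM^{(q)}_H$ is genuinely a submodule rather than merely a subset.
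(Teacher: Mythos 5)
Your proof is correct: the paper states this proposition without any proof (it is the module analogue of the filtration $\{\cA^{(q)}_H\}$ for a regular differential algebra), and your induction on $q$ — with the key observation that a $\bcD$-constant multiplicator commutes with the horizontal differentiations, $\bD_{\mu'}(\bR M)=\bR(\bD_{\mu'}M)$, propagating the $\bfM_{\bcD}$-submodule property up the filtration — is exactly the routine argument the paper leaves implicit. (Incidentally, the occurrence of $\cA^{(q+1)}_H$ in the displayed chain is evidently a typo for $\cM^{(q+1)}_H$, as your reading assumes.)
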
 

\begin{prop} 
There is defined the ascending filtration 
\begin{equation*} 
	0\subset\bfM^{(0)}_H=\bfM_{\bcD}\subset\bfM^{(1)}_H\subset\dots
	\subset\bfM^{(q)}_H\subset\bfM^{(q+1)}_H\subset\cdots 
\end{equation*} 
of $\bfM_{\bcD}$-modules, where 
	$\bfM^{(q)}_H=\big\{\bR\in\bfM \ \big| \ \bD_\mu(\bR)\in\bfM^{(q-1)}_H 
	\ \mu\in\bm\big\}$, $q\in\bbN$. 
In particular, 
$\bfM^{(p)}_H\com\bfM^{(q)}_H\subset\bfM^{(p+q)}_H$, $p,q\in\bbZ_+$.
\end{prop}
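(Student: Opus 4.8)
The plan is to transplant, essentially verbatim, the proof of the analogous filtration $\fM^{(q)}_H$ for a regular differential algebra to the module setting. The key structural fact is that for each $\bD_\mu$ in the horizontal $\bfM$-basis $\bD$ of $\bfD_H=\bcD$, the assignment $\bR\mapsto\bD_\mu(\bR)=[\bD_\mu,\bR]$ is a well-defined $\bbF$-linear endomorphism of $\bfM$ (a differentiation of $\cM$ carries multiplicators to multiplicators) which is moreover a derivation of the associative algebra $(\bfM,\com)$, i.e. $\bD_\mu(\bR\com\bS)=\bD_\mu(\bR)\com\bS+\bR\com\bD_\mu(\bS)$. The base of the filtration is immediate: $\bfM^{(0)}_H=\bfM_{\bcD}=\bigcap_\mu\ke\bD_\mu$ is the already-noted unital subalgebra of $\bfM$, hence in particular an $\bfM_{\bcD}$-submodule of $\bfM$, and it contains $\bfM^{(-1)}_H:=0$.

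Next I would prove, by induction on $q\in\bbN$, the conjunction of the two assertions ``$\bfM^{(q-1)}_H\subset\bfM^{(q)}_H$'' and ``$\bfM^{(q)}_H$ is an $\bfM_{\bcD}$-submodule of $\bfM$''. For the inclusion: if $\bR\in\bfM^{(q-1)}_H$ then $\bD_\mu(\bR)\in\bfM^{(q-2)}_H\subset\bfM^{(q-1)}_H$ for all $\mu$ by the inductive hypothesis, so $\bR$ meets the defining condition of $\bfM^{(q)}_H$. For the module property: $\bfM^{(q)}_H$ is the preimage of the $\bbF$-subspace $\bfM^{(q-1)}_H$ under the $\bbF$-linear family $(\bD_\mu)$, hence an $\bbF$-subspace; and if $\bR\in\bfM^{(q)}_H$ and $\bS\in\bfM_{\bcD}$, the Leibniz rule gives $\bD_\mu(\bS\com\bR)=\bS\com\bD_\mu(\bR)\in\bfM^{(q-1)}_H$ and $\bD_\mu(\bR\com\bS)=\bD_\mu(\bR)\com\bS\in\bfM^{(q-1)}_H$, because $\bD_\mu(\bS)=0$ and $\bfM^{(q-1)}_H$ is an $\bfM_{\bcD}$-module by the inductive hypothesis. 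Thus $\bS\com\bR,\bR\com\bS\in\bfM^{(q)}_H$.

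Finally, for the multiplicativity $\bfM^{(p)}_H\com\bfM^{(q)}_H\subset\bfM^{(p+q)}_H$ I would induct on $n=p+q\in\bbZ_+$. The case $n=0$ is the subalgebra property of $\bfM_{\bcD}$. For $n\ge1$, take $\bR\in\bfM^{(p)}_H$, $\bS\in\bfM^{(q)}_H$ with $p+q=n$; by symmetry we may assume $p\ge1$. The Leibniz rule yields $\bD_\mu(\bR\com\bS)=\bD_\mu(\bR)\com\bS+\bR\com\bD_\mu(\bS)$ with $\bD_\mu(\bR)\in\bfM^{(p-1)}_H$ and $\bD_\mu(\bS)\in\bfM^{(q-1)}_H$ (reading $\bfM^{(-1)}_H=0$ when $q=0$). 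By the inductive hypothesis $\bD_\mu(\bR)\com\bS\in\bfM^{(p-1+q)}_H=\bfM^{(n-1)}_H$ and $\bR\com\bD_\mu(\bS)\in\bfM^{(p+q-1)}_H=\bfM^{(n-1)}_H$, so $\bD_\mu(\bR\com\bS)\in\bfM^{(n-1)}_H$ for every $\mu$, whence $\bR\com\bS\in\bfM^{(n)}_H$.

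I do not anticipate a serious obstacle. The two points that need care are: confirming that $\bR\mapsto[\bD_\mu,\bR]$ really lands in $\bfM$ and is a derivation of $(\bfM,\com)$ --- this is the Jacobi identity together with the standard fact that the Lie action of $\bfD$ on $\bfM$ is by derivations --- and keeping the nested inductions logically separate, so that in the multiplicativity step one invokes the filtration statement only for indices already established and the product statement only at total degree $n-1$. Everything else duplicates the bookkeeping already carried out for $\fM^{(q)}_H$ and for the $\cM^{(q)}_H$ filtration above.
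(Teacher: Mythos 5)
Your proof is correct. The paper states this proposition without proof (as it does the parallel filtration $\fM^{(q)}_H$ in the algebra case), treating it as routine, and your argument is exactly the verification it implicitly relies on: closure of $\bfM$ under $\bR\mapsto[\bD_\mu,\bR]$ and the derivation property of this map on $(\bfM,\com)$, followed by the induction on $q$ for the ascending chain and module structure and the induction on $p+q$ for $\bfM^{(p)}_H\com\bfM^{(q)}_H\subset\bfM^{(p+q)}_H$, with the convention $\bfM^{(-1)}_H=0$ handling the base cases cleanly.
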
 

\begin{prop}\label{BE} 
There is defined the ascending filtration 
\begin{equation*}
	0=\bcE^{(-1)}\subset\bcE=\bcE^{(0)}\subset\dots\subset
	\bcE^{(q)}\subset\bcE^{(q+1)}\subset\cdots
\end{equation*}
of Lie $\bfM_{\bcD}$-algebras, where 
	$\bcE^{(q)}=\big\{\bX\in\bfD_V \ \big| \ [\bD_{\mu'},\bX]\subset\bcE^{(q-1)}, \ 	
	\mu'\in\bm'\big\}$, $q\in\bbZ_+$. 
Moreover, 
\begin{itemize} 
	\item
		$[\bcE^{(p)},\bcE^{(q)}]\subset\bcE^{(p+q)}$, $p,q\in\bbZ_+$; 
	\item 
		$\bfD^{(q)}_{\bcD}=\bcE^{(q)}\oplus_{\bfM_{\bcD}}\bcD$, $q\in\bbZ_+$.
\end{itemize}
\end{prop}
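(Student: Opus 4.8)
The plan is to transcribe the proof of Proposition~\ref{PE} to the present setting: regularity of $(\cM,\ka,\bfD_H)$ furnishes exactly the data that argument relied on, namely the splitting $\bfD=\bfD_V\oplus_{\bfM}\bfD_H$ with $\bfD_H=\bcD$, the vertical and horizontal $\bfM$-bases $\bp,\bD$, the relations $[\bD_{\mu'},\bp_s]=\bGa^t_{\mu' s}\bp_t\in\bfD_V$, and the Jacobi and matching identities valid in every Lie $\bfM$-algebra. I would first record the structural fact that bracketing with a Cartan basis element preserves $\bfD_V$: for $\bX=\bX^s\bp_s\in\bfD_V$ the matching condition gives $[\bD_{\mu'},\bX]=(\bD_{\mu'}\bX^s)\bp_s+\bX^s\bGa^t_{\mu' s}\bp_t=\bna_{\mu'}\bX\in\bfD_V$, so that for an arbitrary $\bX=\bX_V+\bX_H$ ($\bX_V\in\bfD_V$, $\bX_H\in\bcD$) the vertical component of $[\bD_{\mu'},\bX]$ is precisely $[\bD_{\mu'},\bX_V]$, while $[\bD_{\mu'},\bX_H]\in\bcD$. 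This is the hinge of the whole argument.

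Next I would check, by induction on $q$, that each $\bcE^{(q)}$ is a well-defined $\bfM_{\bcD}$-submodule of $\bfD_V$ with $\bcE^{(q-1)}\subset\bcE^{(q)}$: the inclusion is immediate since $\bX\in\bcE^{(q-1)}$ forces $[\bD_{\mu'},\bX]\in\bcE^{(q-2)}\subset\bcE^{(q-1)}$; and for $\bR\in\bfM_{\bcD}$ one has $\bD_{\mu'}\bR=[\bD_{\mu'},\bR]=0$, so $[\bD_{\mu'},\bR\,\bX]=\bR\,[\bD_{\mu'},\bX]\in\bR\,\bcE^{(q-1)}\subset\bcE^{(q-1)}$, whence $\bR\,\bX\in\bcE^{(q)}$ (and $\bR\,\bX\in\bfD_V$ since $\bfD_V$ is an $\bfM$-submodule). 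The base case is $\bcE^{(-1)}=0$, $\bcE^{(0)}=\{\bX\in\bfD_V\mid[\bD_{\mu'},\bX]=0,\ \mu'\in\bm'\}$. Then I would prove the graded bracket bound $[\bcE^{(p)},\bcE^{(q)}]\subset\bcE^{(p+q)}$ by induction on $p+q$: the Jacobi identity gives $[\bD_{\mu'},[\bX,\bY]]=[[\bD_{\mu'},\bX],\bY]+[\bX,[\bD_{\mu'},\bY]]$, which for $p=q=0$ vanishes, and in general lies in $[\bcE^{(p-1)},\bcE^{(q)}]+[\bcE^{(p)},\bcE^{(q-1)}]\subset\bcE^{(p+q-1)}$ by the inductive hypothesis; since also $[\bX,\bY]\in\bfD_V$ we get $[\bX,\bY]\in\bcE^{(p+q)}$. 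Taking $p=q$ shows each $\bcE^{(q)}$ is a Lie subalgebra, hence a Lie $\bfM_{\bcD}$-algebra with structure inherited from $\bfD$, and the displayed chain is an ascending filtration.

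Finally I would establish $\bfD^{(q)}_{\bcD}=\bcE^{(q)}\oplus_{\bfM_{\bcD}}\bcD$ by induction on $q$, taking the filtration $\{\bfD^{(q)}_{\bcD}\}$ from the earlier proposition as given. The sum is genuinely direct: $\bcE^{(q)}\cap\bcD\subset\bfD_V\cap\bfD_H=0$ and both summands are $\bfM_{\bcD}$-submodules, so every $\bX$ has a unique decomposition $\bX=\bX_V+\bX_H$. If $\bX\in\bfD^{(q)}_{\bcD}$, then $[\bD_{\mu'},\bX]\in\bfD^{(q-1)}_{\bcD}=\bcE^{(q-1)}\oplus_{\bfM_{\bcD}}\bcD$ by the inductive hypothesis; reading off vertical components via the hinge fact yields $[\bD_{\mu'},\bX_V]\in\bcE^{(q-1)}$ for all $\mu'$, i.e. $\bX_V\in\bcE^{(q)}$. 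Conversely, for $\bX_V\in\bcE^{(q)}$ and $\bX_H\in\bcD$ one has $[\bD_{\mu'},\bX_V]\in\bcE^{(q-1)}\subset\bfD^{(q-1)}_{\bcD}$ and $[\bD_{\mu'},\bX_H]\in\bcD=\bfD^{(-1)}_{\bcD}\subset\bfD^{(q-1)}_{\bcD}$, and the reduction from the basis $\bD$ to all of $\bcD$ is handled with the matching identity $[\bR\,\bD_{\mu'},\bX]=\bR\,[\bD_{\mu'},\bX]-(\bX\bR)\,\bD_{\mu'}$, whose last term lies in $\bcD$. The base case $q=0$ is the same computation with $\bcE^{(-1)}=0$.

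I expect the last step to be the main obstacle: it is where regularity of the module is genuinely used (through $[\bD_{\mu'},\bfD_V]\subset\bfD_V$), and where one has to keep the vertical/horizontal bookkeeping and the distinction between $\bfM_{\bcD}$- and $\bfM$-linearity straight while manipulating brackets with the Jacobi and matching identities and reducing conditions on all of $\bcD$ to conditions on the basis $\bD$. The preceding steps are routine inductions, identical in form to those behind Proposition~\ref{PE} with $(\cA,\fM,\cD,\fD,\fD_V,D_\mu,\cE^{(q)},\fD^{(q)}_\cD)$ replaced throughout by $(\cM,\bfM,\bcD,\bfD,\bfD_V,\bD_{\mu'},\bcE^{(q)},\bfD^{(q)}_{\bcD})$.
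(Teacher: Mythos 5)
Your filtration, $\bfM_{\bcD}$-module and graded-bracket arguments are fine (note the paper itself states Proposition~\ref{BE}, like Proposition~\ref{PE}, without proof, so there is no argument of the author's to compare against), and your forward inclusion and the directness of the sum are correct. The genuine gap sits exactly where you predicted: in the reverse inclusion, the ``reduction from the basis $\bD$ to all of $\bcD$''. The matching identity $[\bR\bD_{\mu'},\bX]=\bR[\bD_{\mu'},\bX]-(\bX\bR)\bD_{\mu'}$ disposes of the last term, but you still need $\bR[\bD_{\mu'},\bX]\in\bfD^{(q-1)}_{\bcD}$ for an \emph{arbitrary} multiplicator $\bR\in\bfM$, and $\bfD^{(q-1)}_{\bcD}$ (like $\bcE^{(q-1)}$) is only an $\bfM_{\bcD}$-module, not an $\bfM$-module: already $\bR\cdot\bcE^{(0)}\not\subset\bfD^{(0)}_{\bcD}$, since $[\bD_{\mu'},\bR\bX]=(\bD_{\mu'}\bR)\cdot\bX$ is a generally nonzero vertical element. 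For $q=0$ the offending term vanishes because $[\bD_{\mu'},\bX_V]=0$, which is why your base case works; for $q\ge1$ the step fails and cannot be repaired, because the inclusion $\bcE^{(q)}\subset\bfD^{(q)}_{\bcD}$ is actually false if $\bfD^{(q)}_{\bcD}$ is read literally as $\{\bX\mid[\bcD,\bX]\subset\bfD^{(q-1)}_{\bcD}\}$ with brackets against \emph{all} of $\bcD$. Concretely, in the jet algebra of Section~6 with one independent and one dependent variable, $Z=\sum_i i\,(D^{i-1}\phi)\,\p_{u_i}$ lies in $\cE^{(1)}$ (indeed $[D,Z]=-X_\phi$ with $X_\phi=\sum_i(D^i\phi)\p_{u_i}$ evolutionary), yet $[xD,Z]=-x\,X_\phi\notin\fD_\cD$ because $[D,x\,X_\phi]=X_\phi$ is vertical and nonzero; so $Z\notin\fD^{(1)}_\cD$ under the literal reading.

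The upshot is that the second bullet can only be proved after one fixes the reading of $\bfD^{(q)}_{\bcD}$ for $q\ge1$ as the condition $[\bD_{\mu'},\bX]\in\bfD^{(q-1)}_{\bcD}$ on the basis differentiations only (the same reading used for $\bcE^{(q)}$); with that convention your basis computation already completes both inclusions and the problematic reduction step is simply not needed. You should state this explicitly rather than appeal to a reduction that does not go through. One further small slip: taking $p=q$ in $[\bcE^{(p)},\bcE^{(q)}]\subset\bcE^{(p+q)}$ gives only $[\bcE^{(q)},\bcE^{(q)}]\subset\bcE^{(2q)}$, which does not show that $\bcE^{(q)}$ is a Lie subalgebra (and for $q\ge1$ it generally is not); fortunately the proposition asserts only the graded bound, so this claim should just be dropped.
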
 

\section{Spectral sequences.} 
\begin{prop}
Let $\cA$ be an unital associative commutative algebra 
(in particular, $\cA$ has a multiplicative unit $e\in\cA$). 
There is the natural isomorphism
\begin{itemize} 
	\item 
		$\cA\simeq\fM(\cA), \quad f\mapsto\ad_f : \cA\to\cA, \quad g\mapsto f\cdot g$. 
\end{itemize}
Moreover, for any $\cA$-module $\cM$ there is the natural isomorphism 
\begin{itemize} 
	\item 
		$\cA\simeq\fM(\cM), \quad f\mapsto\bad_f=(\ad_f,\ad_f), \quad 
		\ad_f : \cM\to\cM,\quad M\mapsto f\cdot M$.
\end{itemize}
\end{prop}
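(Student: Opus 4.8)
The plan is to exhibit explicit maps in both directions and check they are mutually inverse algebra isomorphisms, doing the module case by the same recipe. First I would treat the claim $\cA\simeq\fM(\cA)$. Define $\Phi:\cA\to\fM(\cA)$ by $\Phi(f)=\ad_f$, where $\ad_f(g)=f\cdot g$; since $\cA$ is commutative and associative, $\ad_f$ is $\cA$-linear (i.e. $\ad_f(h\cdot g)=f\cdot h\cdot g=h\cdot\ad_f(g)$), so $\ad_f\in\End_\cA(\cA)=\fM(\cA)$, and $\Phi$ is well defined. Linearity of $\Phi$ over $\bbF$ is immediate, and $\Phi(f\cdot f')=\ad_{f\cdot f'}=\ad_f\com\ad_{f'}=\Phi(f)\com\Phi(f')$ by associativity, while $\Phi(e)=\ad_e=\id$; hence $\Phi$ is a unital algebra morphism. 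For surjectivity and injectivity at once, define $\Psi:\fM(\cA)\to\cA$ by $\Psi(R)=R(e)$. Then $\Psi(\Phi(f))=\ad_f(e)=f\cdot e=f$, so $\Psi\com\Phi=\id_\cA$; conversely, for $R\in\fM(\cA)$ and any $g\in\cA$, $\cA$-linearity gives $R(g)=R(g\cdot e)=g\cdot R(e)=\ad_{R(e)}(g)$, so $\Phi(\Psi(R))=\Phi(R(e))=R$, i.e. $\Phi\com\Psi=\id_{\fM(\cA)}$. Thus $\Phi$ is a bijective unital algebra morphism, proving the first isomorphism.

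For the module statement, recall that a multiplicator of the $\cA$-module $\cM$ is a pair acting compatibly on $\cA$ and on $\cM$; the element of $\bfM(\cM)$ attached to $f\in\cA$ is $\bad_f=(\ad_f,\ad_f)$, where the first $\ad_f$ is multiplication by $f$ on $\cA$ (already identified with an element of $\fM(\cA)$ above) and the second is the map $M\mapsto f\cdot M$ on $\cM$. One checks that $M\mapsto f\cdot M$ is additive and compatible with the $\cA$-action in the sense required for a multiplicator, using the module axioms $(f\cdot h)\cdot M=f\cdot(h\cdot M)$ and $f\cdot(h\cdot M)=h\cdot(f\cdot M)$. Then the map $f\mapsto\bad_f$ is again an $\bbF$-linear unital algebra morphism $\cA\to\bfM(\cM)$ by the same computation as before, now carried out in both components simultaneously. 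Injectivity follows since the first component already determines $f$ (via $f\mapsto\ad_f\in\fM(\cA)$, which is injective by the previous paragraph); alternatively, if $\cM$ has a distinguished generating behaviour one evaluates on elements, but injectivity via the $\cA$-component is cleanest. Surjectivity uses the structural fact (from the cited conventions in \cite{Z}) that a multiplicator of $\cM$ over the commutative algebra $\cA$ is determined by its $\cA$-component together with $\cA$-linearity on $\cM$, exactly as $R(g)=g\cdot R(e)$ reconstructed $R$ above.

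The only real subtlety — the step I expect to be the main obstacle — is pinning down the precise definition of $\fM(\cM)=\End_\cA(\cM)$-style multiplicators for a module as used in this paper, since here a multiplicator of a module is encoded as a \emph{pair} acting on $\cA$ and on $\cM$ rather than merely as an $\cA$-endomorphism of $\cM$; one must verify that the reconstruction formula and the compatibility between the two components of a general multiplicator $\bR=(R,R_\cM)$ force $R=\ad_{R_\cM(\text{unit-like element})}$ and hence $\bR=\bad_f$ for a unique $f$. Once the bookkeeping of that pair structure is matched to the conventions of \cite{Z}, everything else is the routine verification above: both maps are unital $\bbF$-algebra morphisms, and the evaluation-at-$e$ map furnishes a two-sided inverse, giving the asserted natural isomorphisms.
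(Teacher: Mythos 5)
Your argument is correct, and in fact the paper offers no proof of this proposition at all: it is stated as a standard fact (with the notion of multiplicator taken from \cite{Z}) and immediately used to justify the identification $\fM(\cA)\equiv\fM(\cM)\equiv\cA$. Your evaluation-at-the-unit argument is exactly the verification being left implicit: $\Psi(R)=R(e)$ inverts $f\mapsto\ad_f$ because $R(g)=R(g\cdot e)=g\cdot R(e)$ for $R\in\End_\cA(\cA)=\fM(\cA)$, and the unitality of $\cA$ is precisely the hypothesis that makes this work. The subtlety you flag in the module case is the right one, and it resolves just as you indicate: in the conventions of \cite{Z} a multiplicator of the $\cA$-module $\cM$ is a pair $\bR=(R,R_\cM)$ with $R\in\fM(\cA)$ and $R_\cM(f\cdot M)=R(f)\cdot M$, so setting $f=e$ gives $R_\cM(M)=R(e)\cdot M$, whence $\bR=\bad_{R(e)}$ with $R(e)$ unique by the algebra case; injectivity through the $\cA$-component, as you chose, is indeed the clean route. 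So your proposal supplies, correctly and by the standard route, the proof the paper omits.
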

Taking this into account, {\it we shall identify: $\fM(\cA)\equiv\fM(\cM)\equiv\cA$}. 

\begin{ass}\label{A1}
Let $\cU=\cA,\cM$, and $\cV=\cA,\cK$, where 
\begin{itemize} 
	\item 
		$(\cA,\cD)$ is an unital associative commutative algebra; 
	\item 
		$(\cM,\ka=\ka_\cM,\bcD)$ is an $(\cA,\cD)$-module,  
		$\cK$ is an $\cA$-module;
	\item 
		$\ka_{\cK\cM}\in\Hom_{\Lie\cap\cA}(\fD(\cM);\fD(\cK))$, 
		$\ka_\cK\in\Hom_{\Lie\cap\cA}(\fD(\cA);\fD(\cK))$, \\
		$\ka_\cK=\ka_{\cK\cM}\com\ka_\cM$. 
\end{itemize}
\end{ass}

\begin{ass} 
To simplify the notation, below we write $\vk : \fD(\cU)\to\fD(\cV)$, where 
\begin{alignat*}{2} 
	&\vk=\id_{\fD(\cA)}      \text{ \ \  when } \cU=\cV=\cA,  & \quad
	&\vk=\ka_\cK                 \text{ \ \  when } \cU=\cA, \ \cV=\cK,   \\ 
	&\vk=\Pi                         \text{ \ \ when } \cU=\cM, \ \cV=\cA,  &
	& \vk=\vk_{\cK\cM}     \text{ \ \ when } \cU=\cM, \ \cV=\cK,  
\end{alignat*}
the projection $\Pi : \fD(\cM)\to\fD(\cA)$, \ $\bX=(\na_X,X)\mapsto X$ (see \cite{Z}).
\end{ass}

\begin{defi}
The $\cA$-module $\Om(\cU;\cV)=\oplus_{r\in\bbZ}\Om^r(\cU;\cV)$ 
of differential forms over $\cU$ with coefficients in $\cV$ is defined by the rule 
(see \cite{Z}, for example): 
\begin{equation*} 
	\Om^r(\cU,\cV)=
		\begin{cases}  0, &  r<0, \\ \cV, & r=0, \\
					    \Hom_\cA(\w^r_\cA\fD(\cU);\cV), & r>0. 
		\end{cases}
\end{equation*}
The set $\Om(\cU,\cA)$ has the structure of an exterior $\cA$-algebra, 
and the set $\Om(\cU,\cK)$ has the structure of an exterior $\Om(\cU,\cA)$-module. 
Moreover, in general, $\Om(\cU,\cV)=\Om(\cU,\cA)\otimes_\cA\cV$.
\end{defi} 

\begin{prop}\label{P10}
For every $X\in\fD(\cU)$ 
\begin{itemize} 
	\item 
		the {\it interior product} $i_X\in\End_\cA(\Om(\cU,\cV))$ is defined 
		by the {\it contraction rule}   
		$i_X\om(X_1,\dots,X_{r-1})=r\om(X,X_1,\dots,X_{r-1})$  
		for all $r\in\bbZ$, $\om\in\Om^r(\cU,\cV)$, $X_1,\dots,X_{r-1}\in\fD(\cU)$; 
	\item 
		the {\it Lie derivative} $L_X\in\End_\bbF(\Om(\cU,\cV))$ is defined 
		by the rule 
\begin{equation*} 
	L_X\om(X_1,\dots,X_r)=(\vk X)\om(X_1,\dots,X_r)
		-\sum_{1\le s \le r}\om(X_1,\dots[X,X_s]\dots,X_r)
\end{equation*}
	for all $r\in\bbZ$, $\om\in\Om^r(\cU,\cV)$, $X_1,\dots,X_r\in\fD(\cU)$; 
	\item 
		$i_X$ is an exterior differentiation and $L_X$ is a differentiation
		of the $\Om(\cU,\cA)$-module $\Om(\cU,\cV)$, i.e., 
\begin{align*}  
	&i_X\big(\om\w\chi\big)=\big(i_X\om\big)\w\chi+(-1)^r\om\w\big(i_X\chi\big), \\
	&L_X\big(\om\w\chi\big)=\big(L_X\om\big)\w\chi+\om\w\big(L_X\chi\big), 	
\end{align*}
for all $\om\in\Om^r(\cU,\cA)$, $r\in\bbZ_+$, $\chi\in\Om(\cU,\cV)$. 
\end{itemize} 
\end{prop}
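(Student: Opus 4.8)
The plan is to treat $i_X$ and $L_X$ separately, reading well-definedness off the $\cA$-multilinearity and alternation of $\om$, and then to reduce both Leibniz identities to the case $\cV=\cA$ via the decomposition $\Om(\cU,\cV)=\Om(\cU,\cA)\otimes_\cA\cV$. For $i_X$ there is little to do: for fixed $X$ the rule $(i_X\om)(X_1,\dots,X_{r-1})=r\,\om(X,X_1,\dots,X_{r-1})$ merely freezes one argument of the $\cA$-multilinear alternating map $\om$, so $i_X\om\in\Om^{r-1}(\cU,\cV)$ (and $i_X=0$ on $\Om^0$, since $\Om^{-1}(\cU,\cV)=0$), while $\om\mapsto i_X\om$ is visibly $\cA$-linear; hence $i_X\in\End_\cA(\Om(\cU,\cV))$.

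For $L_X$ the one genuinely delicate point — the step I expect to be the main obstacle — is that the displayed formula again returns an $\cA$-multilinear form. Additivity in each slot is immediate, and alternation follows as usual: if two arguments of a summand $\om(\dots,[X,X_s],\dots)$ coincide, the two surviving terms cancel after a transposition, while $(\vk X)\om(X_1,\dots,X_r)$ vanishes outright. For homogeneity one replaces $X_s$ by $f\cdot X_s$ with $f\in\cA$. Let $\bar X\in\fD(\cA)$ be the differentiation of $\cA$ underlying $\vk X\in\fD(\cV)$; checking the four cases of the above assumption fixing $\vk$ (using $\ka_\cK=\ka_{\cK\cM}\com\ka_\cM$ and the compatibility of the maps $\ka$ with the projections onto $\fD(\cA)$) shows that $\bar X=X$ when $\cU=\cA$ and $\bar X=\Pi X$ when $\cU=\cM$, and that this same $\bar X$ governs both the Leibniz rule of $\vk X$ on $\cV$, $(\vk X)(f\cdot v)=(\bar X f)\cdot v+f\cdot(\vk X)v$, and the matching condition of the Lie $\cA$-algebra $\fD(\cU)$, $[X,f\cdot Y]=(\bar X f)\cdot Y+f\cdot[X,Y]$. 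Hence the contributions of $(\bar X f)$ coming from $(\vk X)\big(f\cdot\om(\dots)\big)$ and from $\om\big(\dots,[X,f\cdot X_s],\dots\big)$ cancel, and what remains is $f\cdot(L_X\om)(X_1,\dots,X_r)$, so $L_X\om\in\Om^r(\cU,\cV)$. Running the same computation with $\om$ replaced by $g\cdot\om$ gives $L_X(g\cdot\om)=(\bar X g)\cdot\om+g\cdot L_X\om$, so $L_X$ is $\bbF$-linear but not $\cA$-linear; thus $L_X\in\End_\bbF(\Om(\cU,\cV))$, acting on $\Om^0=\cV$ as $v\mapsto(\vk X)v$.

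For the two Leibniz identities I would write $\chi\in\Om(\cU,\cV)$ as a finite sum $\sum_j\be_j\otimes v_j$ with $\be_j\in\Om(\cU,\cA)$ and $v_j\in\cV$. Since $\be\otimes v$ is the form $(X_1,\dots)\mapsto\be(X_1,\dots)\cdot v$, this gives $\om\w\chi=\sum_j(\om\w\be_j)\otimes v_j$, $i_X(\be\otimes v)=(i_X\be)\otimes v$, and (by the $L_X$-formula applied to the generator $\be\otimes v$) $L_X(\be\otimes v)=(L_X\be)\otimes v+\be\otimes(\vk X)v$. Substituting these expressions reduces both identities to the case $\cV=\cA$ — that is, to the classical facts that on the exterior $\cA$-algebra $\Om(\cU,\cA)$ attached to the Lie $\cA$-algebra $\fD(\cU)$ the contraction $i_X$ is an antiderivation of degree $-1$ and $L_X$ (built from the derivation $\vk X$ of $\cA$ together with the adjoint action of $X$ on $\fD(\cU)$) is a derivation of degree $0$. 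These in turn are proved, as in \cite{Z}, by evaluating each side on $p+q-1$ (resp.\ $p+q$) arguments, expanding the wedge products, and matching the terms according to whether the distinguished argument $X$, resp.\ the derivation $\vk X$ acting through the Leibniz rule in $\cA$ on a product $\om(\dots)\chi(\dots)$, falls on the $\om$-factor or on the $\chi$-factor.
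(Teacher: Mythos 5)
Your argument is correct, but it is worth saying how it relates to the paper: the paper offers no internal proof at all for Proposition \ref{P10} — it simply points to \cite{Z}, where the statements are verified by direct evaluation of both sides of each identity on tuples of differentiations. Your route is genuinely different in structure: you first isolate the only delicate point (that the displayed formula for $L_X\om$ is again $\cA$-multilinear), and you resolve it by observing that the same underlying differentiation $\bar X$ of $\cA$ (namely $X$ when $\cU=\cA$ and $\Pi X$ when $\cU=\cM$) governs both the Leibniz rule of $\vk X$ on $\cV$ and the matching condition $[X,f\cdot X_s]=(\bar X f)\cdot X_s+f\cdot[X,X_s]$ in the Lie $\cA$-algebra $\fD(\cU)$, so the two $(\bar X f)$-terms cancel; you then reduce the two Leibniz identities to the case $\cV=\cA$ through the decomposition $\Om(\cU,\cV)=\Om(\cU,\cA)\otimes_\cA\cV$, using $i_X(\be\otimes v)=(i_X\be)\otimes v$ and $L_X(\be\otimes v)=(L_X\be)\otimes v+\be\otimes(\vk X)v$. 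What this buys is a cleaner separation of the module-theoretic bookkeeping from the purely combinatorial exterior-algebra computation, which you (like the paper) ultimately delegate to the direct term-matching argument of \cite{Z}; the cost is that your reduction silently relies on the structural facts that the maps $\ka$ are compatible with the projections onto $\fD(\cA)$ (so that $\bar X$ is well defined and consistent in all four cases of the assumption fixing $\vk$) and on the identification $\Om(\cU,\cV)=\Om(\cU,\cA)\otimes_\cA\cV$, both of which the paper asserts but takes from \cite{Z} rather than proving. Since the paper itself presupposes exactly this background, your proposal is acceptable and in fact more informative than the paper's one-line citation; just make sure, if you write it out in full, to track the normalization factor $r$ in the contraction rule against the wedge-product convention of \cite{Z}, since the antiderivation identity for $i_X$ is sensitive to that choice.
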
 
\begin{proof} 
See \cite{Z}, for example.  
\end{proof}

\begin{defi} 
A form $\om\in\Om^r(\cU,\cV)$ is called a {\it $p$-Cartan form}, $0\le p\le r$, 
if $\om(X_1,\dots,X_r)=0$ when at least $r-p+1$ of the differentiations 
$X_1,\dots,X_r$ are Cartan, i.e., belong to the subalgebra $\cD(\cU)\subset\fD(\cU)$.
\end{defi}
Let $\Om^r_p(\cU,\cV)$ be the $\cA$-module of all $p$-Cartan forms in $\Om^r(\cU,\cV)$. 

\begin{prop} 
The descending filtrations 
\begin{equation*} 
	\Om^r_0(\cU,\cV)=\Om^r(\cU,\cV)\supset\Om^r_1(\cU,\cV)\supset\dots
	\supset\Om^r_r(\cU,\cV)\supset 0, \quad r\in\bbZ,
\end{equation*} 
of $\cA$-modules are defined. Moreover, 
\begin{itemize} 
	\item 
		$\Om^r_p(\cU,\cA)\w_\cA\Om^s_q(\cU,\cV)\subset\Om^{r+s}_{p+q}(\cU,\cV)$ 
		for all possible $p,q,r,s\in\bbZ$. 
\end{itemize}
\end{prop}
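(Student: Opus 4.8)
The statement has two parts: first, that for each fixed $r$ the submodules $\Om^r_p(\cU,\cV)$, $0\le p\le r$, form a descending chain of $\cA$-submodules; and second, that the wedge product respects the filtration index additively. The plan is to verify each part directly from the defining vanishing condition of a $p$-Cartan form, using the already-established structure of $\Om(\cU,\cA)$ as an exterior algebra and of $\Om(\cU,\cV)$ as a module over it (Proposition~\ref{P10} and the definition of $\Om(\cU,\cV)=\Om(\cU,\cA)\otimes_\cA\cV$).

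First I would treat the filtration claim. Each $\Om^r_p(\cU,\cV)$ is an $\cA$-submodule because the defining condition ``$\om(X_1,\dots,X_r)=0$ whenever at least $r-p+1$ of the arguments lie in $\cD(\cU)$'' is manifestly closed under $\cA$-linear combinations, the value at a fixed tuple being $\cA$-linear in $\om$. For the inclusion $\Om^r_{p}(\cU,\cV)\supset\Om^r_{p+1}(\cU,\cV)$ I observe that requiring vanishing whenever at least $r-(p+1)+1=r-p$ arguments are Cartan is a \emph{stronger} demand than requiring vanishing whenever at least $r-p+1$ are Cartan: any tuple with $\ge r-p+1$ Cartan entries in particular has $\ge r-p$ Cartan entries, so a $(p+1)$-Cartan form automatically kills it. Hence $\Om^r_{p+1}\subset\Om^r_p$. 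The endpoints are immediate: $\Om^r_0$ demands vanishing whenever $\ge r+1$ arguments are Cartan, which never happens among $r$ arguments, so $\Om^r_0=\Om^r$; and $\Om^r_r$ consists of forms vanishing as soon as one argument is Cartan, a nonzero submodule containing $0$, giving the terminal ``$\supset 0$''. This disposes of the first assertion for every $r\in\bbZ$ (the degenerate ranges $r\le 0$ being vacuous or trivial by the case definition of $\Om^r$).

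Next I would prove the wedge inequality $\Om^r_p(\cU,\cA)\w_\cA\Om^s_q(\cU,\cV)\subset\Om^{r+s}_{p+q}(\cU,\cV)$. Take $\om\in\Om^r_p(\cU,\cA)$ and $\chi\in\Om^s_q(\cU,\cV)$, and evaluate $\om\w\chi$ on a tuple $(X_1,\dots,X_{r+s})$ in which at least $(r+s)-(p+q)+1$ of the $X_i$ are Cartan; I must show this value is $0$. Expanding $\w$ by the usual shuffle/permutation formula, $(\om\w\chi)(X_1,\dots,X_{r+s})$ is a signed sum over $(r,s)$-shuffles $\si$ of terms $\om(X_{\si(1)},\dots,X_{\si(r)})\,\chi(X_{\si(r+1)},\dots,X_{\si(r+s)})$ (with the normalization matching the contraction rule used in the paper). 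Fix such a shuffle and let $k$ be the number of Cartan arguments falling into the first block of $r$ and $\ell$ the number falling into the second block of $s$; then $k+\ell\ge (r+s)-(p+q)+1$. If $k\ge r-p+1$ the factor $\om(\dots)$ vanishes since $\om$ is $p$-Cartan; otherwise $k\le r-p$, whence $\ell\ge (r+s)-(p+q)+1-(r-p)=s-q+1$, so the factor $\chi(\dots)$ vanishes since $\chi$ is $q$-Cartan. Either way the term is zero, and as the shuffle was arbitrary the whole sum vanishes; thus $\om\w\chi\in\Om^{r+s}_{p+q}(\cU,\cV)$, and it lies in $\Om^{r+s}(\cU,\cV)$ with the correct $\cA$-module structure by Proposition~\ref{P10}.

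The only genuinely delicate point is the combinatorial pigeonhole step in the last paragraph — making sure the bookkeeping $k+\ell\ge(r+s)-(p+q)+1$ forces one of the two blocks to exceed its own Cartan threshold ($k\ge r-p+1$ or $\ell\ge s-q+1$), which is exactly the contrapositive of $k\le r-p$ and $\ell\le s-q$ giving $k+\ell\le(r-p)+(s-q)=(r+s)-(p+q)$; everything else (submodule structure, inclusions, endpoints) is a direct reading of the definitions. One should also note the edge cases where $p+q>r+s$, in which $\Om^{r+s}_{p+q}$ is understood as $0$ and the claim degenerates; these are covered by the phrase ``for all possible $p,q,r,s$'' and need no separate argument.
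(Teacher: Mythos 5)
Your proof is correct, and since the paper states this proposition without any proof, your direct verification from the definition of $p$-Cartan forms (the inclusion $\Om^r_{p+1}\subset\Om^r_p$ from the strengthening of the vanishing condition, plus the shuffle-by-shuffle pigeonhole argument $k+\ell\ge(r+s)-(p+q)+1$ forcing $k\ge r-p+1$ or $\ell\ge s-q+1$) is exactly the argument the author leaves to the reader. No gaps; the only stray remark is calling $\Om^r_r$ ``nonzero,'' which is neither needed nor guaranteed, but it does not affect the claim $\Om^r_r\supset 0$.
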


\begin{prop} 
Let $\om\in\Om^r_p(\cU,\cV)$, $p,r\in\bbZ$, then 
\begin{equation*} 
	i_X\om\in\begin{cases} 
		\Om^{r-1}_{p-1}(\cU,\cV), & X\in\fD(\cU), \\
		\Om^{r-1}_p(\cU,\cV),         & X\in\cD(\cU), \\
		          \end{cases}
		\quad 
	L_X\om\in\begin{cases} 
	       \Om^r_{p-1}(\cU,\cV), & X\in\fD(\cU), \\ 
	       \Om^r_p(\cU,\cV),         & X\in\fD_\cD(\cU). 
	                \end{cases}
\end{equation*}
\end{prop}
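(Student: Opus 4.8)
The plan is to read off all four inclusions directly from the definition of a $p$-Cartan form together with the contraction rule for $i_X$ and the explicit formula for $L_X$ supplied by Proposition~\ref{P10}; no exterior differential and no Cartan-type magic formula are needed. Throughout I will use the equivalent phrasing of the definition: a form lies in $\Om^r_p(\cU,\cV)$ exactly when it vanishes on every $r$-tuple of differentiations that contains at least $r-p+1$ members of $\cD(\cU)$. In this language the whole proof is just a count of how many ``Cartan slots'' each operation preserves.

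For the interior product, fix $X\in\fD(\cU)$ and evaluate $i_X\om$ on a tuple $X_1,\dots,X_{r-1}$ that contains at least $(r-1)-(p-1)+1=r-p+1$ Cartan entries. By the contraction rule $i_X\om(X_1,\dots,X_{r-1})=r\,\om(X,X_1,\dots,X_{r-1})$, and the $r$-tuple $(X,X_1,\dots,X_{r-1})$ still has at least $r-p+1$ Cartan entries, so $\om$ annihilates it; hence $i_X\om\in\Om^{r-1}_{p-1}(\cU,\cV)$. If in addition $X\in\cD(\cU)$, I may instead start from a tuple with only $(r-1)-p+1=r-p$ Cartan entries: inserting the Cartan differentiation $X$ as an extra argument produces an $r$-tuple with at least $r-p+1$ Cartan entries, so $\om$ again vanishes and $i_X\om\in\Om^{r-1}_p(\cU,\cV)$.

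For the Lie derivative I will feed the formula
\[ L_X\om(X_1,\dots,X_r)=(\vk X)\,\om(X_1,\dots,X_r)-\sum_{1\le s\le r}\om(X_1,\dots,[X,X_s],\dots,X_r) \]
a suitably Cartan-heavy tuple and inspect each term separately. For arbitrary $X\in\fD(\cU)$ I take at least $r-(p-1)+1=r-p+2$ Cartan entries among $X_1,\dots,X_r$: the first term vanishes because $\om(X_1,\dots,X_r)=0$ already and $\vk X$ is $\bbF$-linear; in the $s$-th summand, even in the worst case that $X_s$ was Cartan while $[X,X_s]$ is not, the modified tuple still contains at least $r-p+1$ Cartan entries, so that summand vanishes too. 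Thus $L_X\om\in\Om^r_{p-1}(\cU,\cV)$. When $X\in\fD_\cD(\cU)$, the defining property $[\cD(\cU),X]\subset\cD(\cU)$ sharpens the count: now $X_s\in\cD(\cU)$ forces $[X,X_s]\in\cD(\cU)$, so replacing $X_s$ by $[X,X_s]$ never decreases the number of Cartan slots. Hence starting from a tuple with at least $r-p+1$ Cartan entries already annihilates the first term and every summand, which gives $L_X\om\in\Om^r_p(\cU,\cV)$.

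The verification is essentially all bookkeeping, and the one place where the hypotheses genuinely enter is the behaviour of the commutator $[X,X_s]$: for a general differentiation it need not stay in $\cD(\cU)$, which is exactly why the filtration index of $L_X$ drops by one ($\Om^r_p\to\Om^r_{p-1}$), whereas the Lie-B\"aclund condition is precisely what prevents that drop and keeps $L_X$ inside $\Om^r_p$. The extreme values ($p\le 0$, where $\Om^r_p=\Om^r$, and $p>r$, where $\Om^r_p=0$) are immediate, and the degenerate case $r=0$ is vacuous, so neither needs separate treatment.
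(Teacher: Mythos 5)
Your argument is correct: the paper states this proposition without proof, and your slot-counting verification from the definition of $p$-Cartan forms, the contraction rule, and the Lie-derivative formula of Proposition~\ref{P10} is exactly the routine check the paper leaves implicit, including the one substantive point that the Lie-B\"acklund condition $[\cD(\cU),X]\subset\cD(\cU)$ is what prevents the drop in the filtration index for $L_X$.
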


Remind (see \cite{Z}, for example), that the mapping  
$d=d_\vk\in\End_\bbF(\Om(\cU,\cV))$ is defined by the {\it  Cartan formula} 
\begin{align*} 
	d\om(X_0,\dots,X_r)
		&=\frac1{r+1}
			\bigg\{\sum_{0\le s\le r}(-1)^s(\vk X_s)\om(x_0,\dots\ck{X_s}\dots,X_r) \\ 
		&+\sum_{0\le s<t\le r}(-1)^{s+t}\om([X_s,X_t],X_0,\dots\ck{X_s}\dots
			\ck{X_t}\dots,X_r)
			\bigg\}
\end{align*}
for all $r\in\bbZ_+$, $\om\in\Om^r(\cU,\cV)$, $X_0,\dots,X_r\in\fD(\cU)$, 
the ``checked'' arguments are understood to be omitted. 

\begin{prop}\label{P12} 
The following statements hold: 
\begin{itemize} 
	\item 
		$d^r=d\big|_{\Om^r(\cU,\cV)} : \Om^r(\cU,\cV)\to\Om^{r+1}(\cU,\cV)$;
	\item 
		the endomorphism $d$ is an exterior differentiation 
		of the exterior $\cA$-algebra $\Om(\cU,\cA)$ 
		and the exterior $\Om(\cU,\cA)$-module $\Om(\cU,\cM)$; 
	\item  
		the composition $d\com d=0$.  
\end{itemize}
\end{prop}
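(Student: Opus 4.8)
The plan is to verify the three bullet points of Proposition~\ref{P12} directly from the Cartan formula, treating the grading claim first, then the Leibniz (exterior differentiation) property, and finally the crucial identity $d\com d=0$. For the first point, note that if $\om\in\Om^r(\cU,\cV)$ then $d\om$ is defined on $r+1$ arguments, so $d\om\in\Om^{r+1}(\cU,\cV)$ provided we check $\cA$-linearity in each slot; the terms $(\vk X_s)\om(\dots)$ are not individually $\cA$-linear in $X_s$, but the antisymmetrized combination together with the bracket terms is — this is the standard computation, and since $\vk$ is a Lie $\cA$-morphism (so $\vk(fX)=f\cdot\vk X$ as operators on $\cV$ up to the derivation correction, which is absorbed by the $[X_s,X_t]$-terms via the matching condition $[X,fY]=Xf\cdot Y+f[X,Y]$), the cancellation goes through exactly as in the classical de Rham case. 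I would state this and refer to \cite{Z} for the identical bookkeeping.

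For the second point, the graded Leibniz rule $d(\om\w\chi)=d\om\w\chi+(-1)^r\om\w d\chi$ for $\om\in\Om^r(\cU,\cA)$, $\chi\in\Om(\cU,\cV)$ follows by plugging the definition of $\w$ (via shuffles) into the Cartan formula and regrouping; again this is the standard derivation. The only point needing a word is that $\vk$ is a morphism of Lie algebras commuting with the $\cA$-action, so that $(\vk X)$ acts as a derivation on the product of a scalar form-value with a $\cV$-valued form-value; this is exactly what makes the first sum in the Cartan formula respect the wedge. Combined with the first point this gives that $d$ is an exterior differentiation of $\Om(\cU,\cA)$ and of the $\Om(\cU,\cA)$-module $\Om(\cU,\cV)$.

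The main obstacle — and the heart of the proposition — is $d\com d=0$. Here the cleanest route is to avoid expanding the double Cartan sum (which is a notorious index nightmare) and instead use the homotopy-type identity $L_X=i_X\com d+d\com i_X$ together with $[L_X,d]=0$ and $[L_X,i_Y]=i_{[X,Y]}$, all of which are themselves consequences of the Cartan formula and Proposition~\ref{P10}. Concretely: first establish $L_X=i_X d+d i_X$ on $\Om(\cU,\cV)$ by a direct comparison of both sides on $r+1$ arguments (a finite, mechanical check). Then $d^2$ commutes with every $i_X$: indeed $i_X d^2 = (L_X - d i_X)d = L_X d - d i_X d = d L_X - d(L_X - d i_X) = d^2 i_X$, using $[L_X,d]=0$ (which follows from the Cartan formula for $d$ and the definition of $L_X$). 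Since $d^2$ is $\cA$-linear, raises degree by $2$, and is annihilated by contraction with every $X\in\fD(\cU)$ — and a form all of whose interior products vanish and which sits in degree $\ge 2$ with enough contractions must vanish (formally: $d^2\om$ evaluated on any $X_0,\dots,X_{r+1}$ equals $\tfrac1{r+2}i_{X_0}(d^2\om)(X_1,\dots,X_{r+1})=0$) — we conclude $d^2=0$ on each $\Om^r$. I would present this identity-based argument as the proof, flagging the verification of $L_X=i_Xd+di_X$ as the one computation that must be done honestly, and pointing to \cite{Z} for it if the author prefers.

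Alternatively, if one wants the fully self-contained expansion: write out $d(d\om)(X_0,\dots,X_{r+1})$, split into the ``$\vk$ applied to $d\om$'' sum and the ``$d\om$ of a bracket'' sum, substitute the Cartan formula for $d\om$ in each, and organize the resulting terms into (i) double-$\vk$ terms, which cancel in pairs because $\vk$ is a Lie-algebra morphism so $[\vk X_s,\vk X_t]=\vk[X_s,X_t]$ and the $\vk[X_s,X_t]$-contributions match against (ii); (ii) $\vk$-of-bracket terms; and (iii) Jacobi-identity terms of the form $\om([[X_s,X_t],X_u],\dots)$, which cancel by the Jacobi identity in $\fD(\cU)$. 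This is the route taken in \cite{Z}; I would merely indicate it and not reproduce the indices.
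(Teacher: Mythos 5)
The paper itself gives no proof of Proposition~\ref{P12}: it just cites \cite{Z}, Theorem~6, where (as you guess in your last paragraph) the double Cartan sum is expanded directly and the terms are cancelled using the Jacobi identity and $\vk[X,Y]=[\vk X,\vk Y]$. Your identity-based route to $d\com d=0$ is therefore genuinely different from what the paper relies on, and it is viable; your treatment of the first two bullets is the standard bookkeeping (one quibble: $\vk(fX)=f\cdot\vk X$ holds exactly, since $\vk$ is an $\cA$-morphism --- the correction terms in the multilinearity check come from $\vk X_t$ acting as a differentiation on $f\cdot\om(\dots)$, and these are what cancel against $[X_t,fX_s]=X_tf\cdot X_s+f[X_t,X_s]$, so your parenthetical should be rephrased).

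Two points in the $d\com d=0$ argument need repair. First, $[L_X,d]=0$ must be verified directly from the Cartan formula and the definition of $L_X$ (again Jacobi plus the morphism property of $\vk$); you cannot borrow it from the paper's Cartan-magic-formula proposition, because there $L_X\com d=d\com L_X$ is deduced \emph{using} $d\com d=0$, which would be circular. That direct verification is a computation of essentially the same size as the direct expansion proof, so the homotopy trick buys elegance rather than labour --- but it is sound, provided you flag this check as honestly as you flag $L_X=i_X\com d+d\com i_X$. Second, your concluding parenthetical overstates what one contraction gives: $i_{X_0}(d^2\om)=d^2(i_{X_0}\om)$ is not yet zero. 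You must either iterate, pushing contractions through $d^2$ until the original form is exhausted (for $\om\in\Om^r(\cU,\cV)$, $i_{X_r}\cdots i_{X_0}d^2\om=d^2\big(i_{X_r}\cdots i_{X_0}\om\big)=0$ because an $r$-form contracted $r+1$ times vanishes, and then full evaluation of $d^2\om$ is proportional to one more contraction of this zero form), or run an induction on degree with base $\Om^0(\cU,\cV)$, where $i_X\om=0$. With these two repairs your proof is complete and is an acceptable alternative to the cited computation in \cite{Z}.
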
 
\begin{proof} See \cite{Z}, Theorem 6. \end{proof}
In particular, the {\it de Rham complex} $\{\Om^r(\cU,\cV),d^r\mid r\in\bbZ\}$ is defined 
with the cohomology spaces $H^r(\cU,\cV)=\ke d^r\big/\im d^{r-1}$, $r\in\bbZ$. 

\begin{prop} 
The Cartan magic formula 
\begin{equation*} 
	L_X=d\com i_X+i_X\com d
\end{equation*} 
holds for any $X\in\fD(\cU)$. 
In particular, $L_X\com d=d\com L_X$ for any $X\in\fD(\cU)$. 
\end{prop}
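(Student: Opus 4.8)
The plan is to prove the Cartan magic formula $L_X = d\circ i_X + i_X\circ d$ by verifying the identity on a form $\om\in\Om^r(\cU,\cV)$ evaluated on an arbitrary $(r)$-tuple $X_1,\dots,X_r\in\fD(\cU)$, using the explicit Cartan formula for $d$ and the contraction rule for $i_X$ that were recorded just before the statement. The whole argument is a bookkeeping computation: expand both sides into sums indexed by the arguments, and match terms. I would first handle the degenerate low-degree cases ($r<0$ trivially; $r=0$ directly, where $\om\in\cV$, $i_X\om=0$, and the formula reduces to $L_X\om = i_X(d\om) = (\vk X)\om$, which is the definition of both $L_X$ and $d$ in degree $0$), and then treat $r\ge 1$ in general.

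For the general case I would compute $\bigl(i_X\,d\om\bigr)(X_1,\dots,X_r) = (r+1)\,d\om(X,X_1,\dots,X_r)$ and expand via the Cartan formula. This produces two groups of terms: the ``$s=0$'' term $(\vk X)\om(X_1,\dots,X_r)$ together with terms $(-1)^s(\vk X_s)\om(X,X_1,\dots,\ck{X_s},\dots,X_r)$; and the double-sum terms involving $[X,X_t]$ and $[X_s,X_t]$. Separately I would compute $\bigl(d\,i_X\om\bigr)(X_1,\dots,X_r)$ by applying the Cartan formula to the $(r-1)$-form $i_X\om$ and then rewriting every occurrence of $(i_X\om)(\dots)$ via the contraction rule as $r\,\om(X,\dots)$. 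The key combinatorial point is that the normalization constants $\tfrac1{r+1}$, $\tfrac1r$, and the factors $r+1$, $r$ coming from the contraction rule conspire so that the surviving terms have matching coefficients; I would track these carefully since the paper's conventions carry these degree-dependent factors (unlike the more common unnormalized convention).

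**The matching then goes as follows.** The term $(\vk X)\om(X_1,\dots,X_r)$ from $i_X\,d\om$ is exactly the first term in the definition of $L_X\om$. The terms $\om([X,X_s],X_1,\dots,\ck{X_s},\dots,X_r)$ — arising from the $[X,X_t]$ part of $d\om$ and, with opposite sign, from $d\,i_X\om$ — must combine to give $-\sum_s \om(X_1,\dots,[X,X_s],\dots,X_r)$, the second term of $L_X\om$; here I would use antisymmetry of $\om$ to move $[X,X_s]$ into its correct slot, picking up the sign $(-1)^{s-1}$ that cancels against the $(-1)^s$. Finally, all the ``purely $X_s,X_t$'' double-sum terms (those not involving $X$) appearing in $i_X\,d\om$ and in $d\,i_X\om$ must cancel in pairs; this is where I expect the main obstacle, since it requires carefully re-indexing the two double sums — one running over pairs inside $\{X_1,\dots,X_r\}$ that occur in $d$ applied after $i_X$, the other over the same pairs but shifted because $X$ sits in slot $0$ — and checking that every such term appears exactly twice with opposite signs. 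Once that cancellation is confirmed, the identity $L_X = d\circ i_X + i_X\circ d$ follows. The consequence $L_X\circ d = d\circ L_X$ is then immediate: $L_X d = (d i_X + i_X d)d = d i_X d = d(d i_X + i_X d) = d L_X$, using $d\circ d = 0$ from Proposition~\ref{P12}.
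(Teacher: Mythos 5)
Your proposal is correct, but note that the paper itself gives no internal argument for this proposition: its ``proof'' is the citation to \cite{Z}, Theorem 7. What you supply is the standard self-contained verification, and with the paper's normalized conventions it does go through exactly as you plan: writing $X_0=X$, the contraction factor $r+1$ on $d\om\in\Om^{r+1}$ cancels the $\tfrac1{r+1}$ in the Cartan formula, and the factor $r$ from $(i_X\om)(\cdot)=r\,\om(X,\cdot)$ cancels the $\tfrac1r$ in the Cartan formula for the $(r-1)$-form $i_X\om$, so all surviving terms carry coefficient $1$ and the bookkeeping is exactly as in the unnormalized convention. One small misattribution in your matching step: the terms $\om([X,X_s],X_1,\dots,\ck{X_s},\dots,X_r)$ come \emph{only} from $i_X\com d$ (the $s=0$ part of the double sum in the Cartan formula), not partly from $d\com i_X$; what $d\com i_X$ contributes is (i) the single-derivative terms $(-1)^{s-1}(\vk X_s)\om(X,\dots,\ck{X_s},\dots)$, which cancel the corresponding $s\ge1$ terms of $i_X\com d$, and (ii) the terms $\om(X,[X_s,X_t],\dots)$, which cancel the $1\le s<t$ bracket terms $\om([X_s,X_t],X,\dots)$ of $i_X\com d$ by antisymmetry. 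After these cancellations only $(\vk X)\om(X_1,\dots,X_r)$ and the $[X,X_t]$ terms remain, and moving $[X,X_t]$ into slot $t$ costs $(-1)^{t-1}$, yielding $L_X\om$. Your treatment of degree $0$ and the deduction $L_X\com d=d\com L_X$ from $d\com d=0$ are fine. So your route is a legitimate (and more informative) alternative to the paper's appeal to an external reference.
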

\begin{proof} 
See \cite{Z}, Theorem 7. 
\end{proof}

The filtrations $\{\Om^r_p(\cU,\cV)\mid p\in\bbZ\}$, $r\in\bbZ$, 
allow one to refine the de Rham complex to a spectral sequence. 

\begin{prop} 
The filtrations  $\{\Om^r_p(\cU,\cV)\}$ are consistent with the differential 
$d\in\End_\bbF(\Om(\cU,\cV))$, namely, 
\begin{equation*} 
	d\big|_{\Om^r_p(\cU,\cV)} : \Om^r_p(\cU,\cV)\to\Om^{r+1}_p(\cU,\cV)  
	\quad\text{for all}\quad  r,p\in\bbZ_+.
\end{equation*}
\end{prop}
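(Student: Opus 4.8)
The plan is to argue directly from the Cartan formula for $d$, reducing everything to a count of Cartan arguments. Fix $r,p\in\bbZ_+$ with $p\le r$ and $\om\in\Om^r_p(\cU,\cV)$. By Proposition~\ref{P12} we already know $d\om\in\Om^{r+1}(\cU,\cV)$, so the only thing to prove is the Cartan degree: by the definition of a $p$-Cartan form in degree $r+1$, I must show that $d\om(X_0,\dots,X_r)=0$ whenever at least $(r+1)-p+1=r-p+2$ of the arguments $X_0,\dots,X_r\in\fD(\cU)$ lie in the Cartan subalgebra $\cD(\cU)$.

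First I would expand $d\om(X_0,\dots,X_r)$ by the Cartan formula and treat the two sums separately. In the first sum each summand is a multiple of $\om(X_0,\dots\ck{X_s}\dots,X_r)$, a tuple of $r$ entries; deleting a single argument lowers the number of Cartan entries by at most one, so at least $r-p+2-1=r-p+1$ of these $r$ entries are still Cartan, and hence $\om$ vanishes on them by the $p$-Cartan hypothesis. Since $\vk X_s$ acts $\bbF$-linearly on $\cV$, the whole summand $(\vk X_s)\,\om(X_0,\dots\ck{X_s}\dots,X_r)$ is then $0$, so the first sum vanishes identically.

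Next, for the double sum each summand is a multiple of $\om\big([X_s,X_t],X_0,\dots\ck{X_s}\dots\ck{X_t}\dots,X_r\big)$, whose argument tuple again has $r$ entries: the commutator together with the $r-1$ surviving $X_j$. I would split into cases according to how many of $X_s,X_t$ are Cartan. If neither or exactly one of $X_s,X_t$ is Cartan, then the $r-1$ surviving $X_j$ already contain at least $r-p+1$ Cartan entries, so $\om$ vanishes. If both $X_s$ and $X_t$ are Cartan, then the surviving $X_j$ still contain at least $r-p$ Cartan entries, and—using that $\cD(\cU)$ is a Lie subalgebra of $\fD(\cU)$, which is part of the definition of a differential algebra (resp. module)—the commutator $[X_s,X_t]$ is itself Cartan, giving a total of at least $r-p+1$ Cartan arguments; hence $\om$ vanishes again. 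Thus the double sum is zero, so $d\om(X_0,\dots,X_r)=0$ and $d\om\in\Om^{r+1}_p(\cU,\cV)$.

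The only non-formal ingredient is the closure of the Cartan subalgebra under the bracket, which is precisely what is needed in the single case where the commutator enters as a genuinely new argument; every other case is pure bookkeeping of Cartan multiplicities. I therefore expect no real obstacle — the one point demanding care is the off-by-one accounting in the two sums (one argument deleted in the first sum, two deleted but one commutator inserted in the second), and the observation that $\vk X_s$ annihilates the zero it is applied to.
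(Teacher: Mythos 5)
Your argument is correct: the counting in both sums of the Cartan formula is handled properly, and the only structural input—that $\cD(\cU)$ is closed under the bracket, so $[X_s,X_t]$ is again Cartan when both entries are—is exactly what the definition of the Cartan subalgebra provides; the paper states this proposition without proof, and yours is precisely the argument it implicitly relies on. The only cosmetic remark is that for $p>r$ the claim is trivial since $\Om^r_p(\cU,\cV)=0$, so your restriction to $p\le r$ loses nothing.
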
 

\begin{prop} 
The {\it spectral sequence}\footnote{The general definition see, for example, in \cite{SM},\cite{RG}.} 
$\{\E^{pq}_r,d^{pq}_r\mid p,q,r\in\bbZ\}$ is defined, where 
\begin{itemize} 
	\item 
		$\E^{pq}_r=\Z^{pq}_r\big/\big(\B^{pq}_{r-1}+\Z^{p+1,q-1}_{r-1}\big)$; 
	\item 
		$\Z^{pq}_r=\big\{\om\in\Om^{p+q}_p(\cU,\cV) \ \big| \ 
		d\om\in\Om^{p+q+1}_{p+r}(\cU,\cV)\big\}$; 
	\item 
		$\B^{pq}_r=\big\{\om=d\chi\in\Om^{p+q}_p(\cU,\cV) \ \big| \ 
		\chi\in\Om^{p+q-1}_{p-r}(\cU,\cV)\big\}$; 
	\item 
		$d^{pq}_r : \E^{pq}_r\to\E^{p+r,q-r+1}_r$, \quad 
		$d^{pq}_r[\om]^{pq}_r=[d\om]^{p+r,q-r+1}_r$, \\
		where $[\om]^{pq}_r$ is the equivalence class of a form $\om\in\Z^{pq}_r$ 
		in the quotient space $\E^{pq}_r$.
\end{itemize}
\end{prop}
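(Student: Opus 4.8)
## Proof proposal

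The plan is to verify that the four items in the statement fit together into a genuine spectral sequence by checking the standard axioms: that the $d^{pq}_r$ are well-defined on the quotients $\E^{pq}_r$, that $d^{pq}_r\com d^{p-r,q+r-1}_r=0$, and that $\E^{pq}_{r+1}$ is canonically the cohomology of $(\E^{\bullet\bullet}_r,d^{\bullet\bullet}_r)$ at the spot $(p,q)$. This is exactly the filtered-complex machinery, applied to the complex $\Om(\cU,\cV)$ with differential $d$ and the decreasing filtration by Cartan degree $\Om^n_p(\cU,\cV)$, $n=p+q$. The crucial inputs are already in place: the preceding proposition says $d$ preserves the filtration, $d\com d=0$ by Proposition~\ref{P12}, and $\Om^n_p(\cU,\cV)=0$ for $p<0$ and for $p>n$, so the filtration is bounded on each $\Om^n$.

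First I would check that $\Z^{pq}_r$ and $\B^{pq}_r$ are $\cA$-submodules of $\Om^{p+q}_p(\cU,\cV)$ with the expected inclusions: $\B^{pq}_{r}\subset\B^{pq}_{r+1}\subset\cdots\subset\Z^{p+1,q-1}_{\infty}\subset\cdots\subset\Z^{pq}_{r+1}\subset\Z^{pq}_r$, all living inside $\Om^{p+q}_p$. The inclusion $\B^{pq}_{r-1}\subset\Z^{pq}_r$ is immediate because if $\om=d\chi$ then $d\om=0\in\Om^{p+q+1}_{p+r}$; and $\Z^{p+1,q-1}_{r-1}\subset\Z^{pq}_r$ since $\Om^{p+q}_{p+1}\subset\Om^{p+q}_p$ and $d(\Om^{p+q}_{p+1})\subset\Om^{p+q+1}_{p+1}\subset\Om^{p+q+1}_{p+r-1+1}=\Om^{p+q+1}_{p+r}$... wait, that needs $r\ge 1$ and the filtration to be decreasing, which it is, so $\Om^{p+q+1}_{p+1}\supset\Om^{p+q+1}_{p+r}$ — I must be careful with the direction; the honest check is that $d\chi$ for $\chi\in\Om^{p+q}_{p+1}$ has $d(d\chi)=0$, so $\Z^{p+1,q-1}_{r-1}\subset\Z^{pq}_r$ trivially once we know $\Om^{p+q}_{p+1}\subset\Om^{p+q}_p$. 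Hence the denominator $\B^{pq}_{r-1}+\Z^{p+1,q-1}_{r-1}$ sits inside $\Z^{pq}_r$ and the quotient $\E^{pq}_r$ makes sense.

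Next I would show $d^{pq}_r$ is well-defined: if $\om\in\Z^{pq}_r$ then by definition $d\om\in\Om^{p+q+1}_{p+r}(\cU,\cV)=\Om^{(p+r)+(q-r+1)}_{p+r}(\cU,\cV)$, and $d(d\om)=0\in\Om^{\cdot}_{p+2r}$, so $d\om\in\Z^{p+r,q-r+1}_r$; and if $\om\in\B^{pq}_{r-1}+\Z^{p+1,q-1}_{r-1}$ then $d\om=d(\text{something in }\Z^{p+1,q-1}_{r-1})$, which by the same computation lands in $\B^{p+r,q-r+1}_{r-1}$, i.e. in the denominator at $(p+r,q-r+1)$. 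Thus $d^{pq}_r$ descends to the quotients. Then $d^{pq}_r\com d^{p-r,q+r-1}_r=0$ is immediate from $d\com d=0$. The only genuinely substantive step — and the one I expect to be the main obstacle — is the \emph{cohomology identification} $\E^{pq}_{r+1}\simeq\ke d^{pq}_r\big/\im d^{p-r,q+r-1}_r$: one must trace through the definitions and show that a cocycle for $d^{pq}_r$ is represented by a form $\om\in\Om^{p+q}_p$ with $d\om\in\Om^{p+q+1}_{p+r+1}$ modulo the filtration-shifted boundary and $\Z^{p+1,q-1}_r$ terms, which is precisely $\Z^{pq}_{r+1}$ modulo $\B^{pq}_{r}+\Z^{p+1,q-1}_{r}$. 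This is the classical but bookkeeping-heavy lemma on associated spectral sequences of a filtered complex; I would either cite it from \cite{SM} or \cite{RG} verbatim, or carry out the diagram-chase in the one nonobvious direction (that every element of the numerator of $\E^{pq}_{r+1}$ arises from a $d^{pq}_r$-cocycle), using the boundedness of the filtration on each $\Om^n$ to guarantee the chase terminates. Everything else is routine verification that the filtration is compatible with the $\cA$-module structure, which follows from the multiplicativity $\Om^r_p(\cU,\cA)\w_\cA\Om^s_q(\cU,\cV)\subset\Om^{r+s}_{p+q}(\cU,\cV)$ already recorded above.
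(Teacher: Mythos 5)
Your proposal is correct and matches the paper's approach: the paper gives no independent argument but simply invokes the standard spectral-sequence-of-a-filtered-complex construction from \cite{SM},\cite{RG}, with the required inputs ($d\com d=0$, $d$ preserving the Cartan filtration, boundedness $0\le p\le p+q$) supplied by the preceding propositions, which is exactly the verification you outline. One tiny cleanup: the inclusion $\Z^{p+1,q-1}_{r-1}\subset\Z^{pq}_r$ needs no appeal to $d\com d=0$ at all, since $\om\in\Z^{p+1,q-1}_{r-1}$ means $\om\in\Om^{p+q}_{p+1}\subset\Om^{p+q}_p$ and $d\om\in\Om^{p+q+1}_{(p+1)+(r-1)}=\Om^{p+q+1}_{p+r}$, which is verbatim the defining condition of $\Z^{pq}_r$.
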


In particular, 
\begin{itemize} 
	\item 
		$\E^{pq}_r=\Om^{p+q}_p(\cU,\cV)\big/\Om^{p+q}_{p+1}(\cU,\cV)$ 
		for all $r\leq 0$, $p,q\in\bbZ$; 
	\item 
		$\E^{pq}_r=0$ for all $p<0$, $r,q\in\bbZ$, and for all $q<0$, $r,p\in\bbZ$; 
	\item 
		$\E^{pq}_{r+1}=\ke d^{pq}_r\big/\im d^{p-r,q+r-1}_r$ for all $p,q,r\in\bbZ$.
\end{itemize} 

\begin{prop} 
The limit terms of the spectral sequence $\{\E^{pq}_r,d^{pq}_r\mid p,q,r\in\bbZ\}$ 
are defined as follows: 
\begin{itemize} 
	\item 
		$\E^{pq}_\infty
		=\Z^{pq}_\infty\big/\big(\B^{pq}_\infty+\Z^{p+1,q-1}_\infty\big)$; 
	\item 
		$\Z^{pq}_\infty=\big\{\om\in\Om^{p+q}_p(\cU,\cV) \ \big| \ d\om=0\big\}$; 
	\item 
		$\B^{pq}_\infty=\big\{\om=d\chi\in\Om^{p+q}_p(\cU,\cV) \ \big| \ 
		\chi\in\Om^{p+q-1}(\cU,\cV)\big\}$.
\end{itemize}
\end{prop}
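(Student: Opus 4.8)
The plan is to use the standard description of the limit term of a filtered spectral sequence, $\Z^{pq}_\infty=\bigcap_{r\ge 0}\Z^{pq}_r$ and $\B^{pq}_\infty=\bigcup_{r\ge 0}\B^{pq}_r$, and to observe that for each fixed pair $p,q$ both of these stabilize after finitely many steps, because the Cartan filtrations are finite in the relevant degree: $\Om^s_t(\cU,\cV)=\Om^s(\cU,\cV)$ for $t\le 0$, and $\Om^s_t(\cU,\cV)=0$ for $t>s$. The proposition then follows by passing to the limit in the formula $\E^{pq}_r=\Z^{pq}_r\big/\big(\B^{pq}_{r-1}+\Z^{p+1,q-1}_{r-1}\big)$ of the preceding proposition.

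First I would identify $\Z^{pq}_\infty$. By the consistency of $d$ with the filtrations (established just above), $\om\in\Om^{p+q}_p(\cU,\cV)$ implies $d\om\in\Om^{p+q+1}_p(\cU,\cV)$; moreover $\Om^{p+q+1}_{p+r}(\cU,\cV)=0$ as soon as $p+r>p+q+1$, i.e. $r\ge q+2$. Hence for all such $r$ the defining condition $d\om\in\Om^{p+q+1}_{p+r}(\cU,\cV)$ reduces to $d\om=0$, so the descending chain $\Z^{pq}_0\supset\Z^{pq}_1\supset\cdots$ is eventually equal to $\{\om\in\Om^{p+q}_p(\cU,\cV)\mid d\om=0\}$, which is therefore its intersection; this is the asserted $\Z^{pq}_\infty$. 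Next I would identify $\B^{pq}_\infty$: as $r$ grows, $p-r$ decreases, so the subspaces $\Om^{p+q-1}_{p-r}(\cU,\cV)$ increase, and once $p-r\le 0$ they all equal $\Om^{p+q-1}(\cU,\cV)$. Thus the ascending chain $\B^{pq}_0\subset\B^{pq}_1\subset\cdots$ is eventually equal to $\{\om=d\chi\in\Om^{p+q}_p(\cU,\cV)\mid\chi\in\Om^{p+q-1}(\cU,\cV)\}$, which is its union and the asserted $\B^{pq}_\infty$. (When $p<0$ or $q<0$ every module occurring is zero and the formulas hold trivially.)

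Applying the same two stabilizations also to $\B^{pq}_{r-1}$ and to $\Z^{p+1,q-1}_{r-1}$, I conclude that for $r$ sufficiently large (depending only on $p,q$) the group $\E^{pq}_r$ equals the fixed quotient $\Z^{pq}_\infty\big/\big(\B^{pq}_\infty+\Z^{p+1,q-1}_\infty\big)$, which is by definition the limit term $\E^{pq}_\infty$. The argument is purely formal; the only point requiring attention is the bookkeeping of the index ranges in which the Cartan filtrations become, respectively, trivial (for $t>s$) and exhaustive (for $t\le 0$), and I do not anticipate any genuine obstacle beyond that routine check.
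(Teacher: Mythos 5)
Your argument is correct, but it does more than the paper does: the paper offers no proof here at all, presenting the limit terms as \emph{defined} by the general theory of spectral sequences (with \cite{SM},\cite{RG} as background), and it defers even the convergence statement $\lim_{r\to\infty}\E^{pq}_r=\E^{pq}_\infty$ to the next proposition, whose proof is again only a citation. You instead verify directly that the displayed $\Z^{pq}_\infty$ and $\B^{pq}_\infty$ are the stable values of $\Z^{pq}_r$ and $\B^{pq}_r$, using that $\Om^{p+q+1}_{p+r}(\cU,\cV)=0$ once $r\ge q+2$ and $\Om^{p+q-1}_{p-r}(\cU,\cV)=\Om^{p+q-1}(\cU,\cV)$ once $r\ge p$, and then pass to the limit in $\E^{pq}_r=\Z^{pq}_r\big/\big(\B^{pq}_{r-1}+\Z^{p+1,q-1}_{r-1}\big)$. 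That is a sound and genuinely more informative route: it proves, in one stroke, both this proposition (as the intersection/union description $\Z^{pq}_\infty=\cap_r\Z^{pq}_r$, $\B^{pq}_\infty=\cup_r\B^{pq}_r$) and the pointwise stabilization $\E^{pq}_r=\E^{pq}_\infty$ for $r\ge\max(q+2,p+1)$, and it makes visible that for fixed $(p,q)$ this stabilization uses only the boundedness of the Cartan filtration in each degree ($\Om^s_t=\Om^s$ for $t\le0$, $\Om^s_t=0$ for $t>s$), not the Cartan-dimension hypothesis invoked later. The only point you should flag explicitly is that the conventions $\Om^s_t(\cU,\cV)=\Om^s(\cU,\cV)$ for $t\le0$ and $\Om^s_t(\cU,\cV)=0$ for $t>s$ are extensions of the paper's definition of $p$-Cartan forms (stated only for $0\le p\le r$), though they are the natural ones and are consistent with the displayed filtration terminating in $\supset0$.
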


\begin{defi} 
The filtration $\{\Om^r_p(\cU,\cV)\}$ is called {\it Cartan} if there exists a number 
$m\in\bbN$, s.t., 
\begin{equation*} 
	\Om^r_0(\cU,\cV)=\dots=\Om^r_{r-m}(\cU,\cV)
	\supset\Om^r_{r-m+1}(\cU,\cV)\supset\dots\supset\Om^r_r(\cU,\cV)\supset 0.
\end{equation*}
This number $m$ is called the {\it Cartan dimension} of the Lie $\cA$-algebra $\cD(\cU)$. 
\end{defi} 

\begin{prop} 
Let the filtration  $\{\Om^r_p(\cU,\cV)\}$ be Cartan.
Then the limit equalities 
\begin{equation*} 
	\lim_{r\to\infty}\E^{pq}_r=\E^{pq}_\infty \quad\text{hold for all}\quad  p,q\in\bbZ.
\end{equation*}
\end{prop}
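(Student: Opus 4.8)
The plan is to prove the sharper statement that, for each fixed pair $(p,q)\in\bbZ\times\bbZ$, the three groups $\Z^{pq}_r$, $\B^{pq}_{r-1}$ and $\Z^{p+1,q-1}_{r-1}$ that enter the definition $\E^{pq}_r=\Z^{pq}_r\big/\big(\B^{pq}_{r-1}+\Z^{p+1,q-1}_{r-1}\big)$ become independent of $r$ once $r$ exceeds an explicit bound, and that their stationary values coincide with $\Z^{pq}_\infty$, $\B^{pq}_\infty$ and $\Z^{p+1,q-1}_\infty$; hence $\E^{pq}_r=\E^{pq}_\infty$ for all large $r$, which is the content of $\lim_{r\to\infty}\E^{pq}_r=\E^{pq}_\infty$. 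Throughout I use the standard out-of-range conventions $\Om^n_p(\cU,\cV)=\Om^n_0(\cU,\cV)=\Om^n(\cU,\cV)$ for $p\le 0$ and $\Om^n_p(\cU,\cV)=0$ for $p>n$, together with the extra stabilization at the lower end furnished by the Cartan hypothesis, $\Om^n_p(\cU,\cV)=\Om^n_0(\cU,\cV)$ whenever $p\le n-m$. I also use the already established facts that $d$ respects the filtration, $d\big(\Om^n_p(\cU,\cV)\big)\subset\Om^{n+1}_p(\cU,\cV)$, and that $d\com d=0$ (Proposition \ref{P12}); together with the nestings $\Om^n_{p+1}(\cU,\cV)\subset\Om^n_p(\cU,\cV)$ these make all the quotients below well defined.

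First I would stabilize the cocycle groups. By definition $\Z^{pq}_r=\big\{\om\in\Om^{p+q}_p(\cU,\cV)\mid d\om\in\Om^{p+q+1}_{p+r}(\cU,\cV)\big\}$, and once $p+r>p+q+1$, i.e. $r\ge q+2$, the target $\Om^{p+q+1}_{p+r}(\cU,\cV)$ vanishes; so for $r\ge q+2$ the defining condition reads $d\om=0$ and therefore $\Z^{pq}_r=\big\{\om\in\Om^{p+q}_p(\cU,\cV)\mid d\om=0\big\}=\Z^{pq}_\infty$. Applying the same computation with $(p+1,q-1)$ in place of $(p,q)$ gives $\Z^{p+1,q-1}_{r-1}=\Z^{p+1,q-1}_\infty$ for $r\ge q+2$ as well.

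Next I would stabilize the coboundary groups, and here the Cartan dimension $m$ is exactly what makes the estimate finite. We have $\B^{pq}_{r-1}=\big\{\om=d\chi\in\Om^{p+q}_p(\cU,\cV)\mid\chi\in\Om^{p+q-1}_{p-r+1}(\cU,\cV)\big\}$, and the Cartan hypothesis yields $\Om^{p+q-1}_{p-r+1}(\cU,\cV)=\Om^{p+q-1}_0(\cU,\cV)=\Om^{p+q-1}(\cU,\cV)$ as soon as $p-r+1\le(p+q-1)-m$, i.e. $r\ge m-q+2$. Hence for $r\ge m-q+2$ we get $\B^{pq}_{r-1}=\big\{\om=d\chi\in\Om^{p+q}_p(\cU,\cV)\mid\chi\in\Om^{p+q-1}(\cU,\cV)\big\}=\B^{pq}_\infty$. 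Combining the three stabilizations, for every $r\ge\max\{q+2,\ m-q+2\}$ one obtains
\[
	\E^{pq}_r=\Z^{pq}_r\big/\big(\B^{pq}_{r-1}+\Z^{p+1,q-1}_{r-1}\big)
	=\Z^{pq}_\infty\big/\big(\B^{pq}_\infty+\Z^{p+1,q-1}_\infty\big)=\E^{pq}_\infty,
\]
so the sequence $r\mapsto\E^{pq}_r$ is eventually constant with value $\E^{pq}_\infty$, which is precisely $\lim_{r\to\infty}\E^{pq}_r=\E^{pq}_\infty$. In the same range the outgoing differential $d^{pq}_r$ vanishes, since its target $\E^{p+r,q-r+1}_r$ is $0$ (indeed $\Z^{p+r,q-r+1}_r\subset\Om^{p+q+1}_{p+r}(\cU,\cV)=0$), and one checks likewise that $\B^{pq}_r$ is stationary, so the isomorphism $\E^{pq}_{r+1}=\ke d^{pq}_r\big/\im d^{p-r,q+r-1}_r$ is consistent with this.

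I do not anticipate a genuine obstacle: this is the familiar ``bounded filtration implies degeneration after finitely many steps'' phenomenon. The only points requiring care are (i) fixing the conventions for $\Om^n_p(\cU,\cV)$ when $p<0$ or $p>n$ so that the groups with subscript $\infty$ literally coincide with the stabilized $\Z^{pq}_r$, $\B^{pq}_{r-1}$, $\Z^{p+1,q-1}_{r-1}$; and (ii) verifying the two threshold inequalities $r\ge q+2$ and $r\ge m-q+2$, for which the existence of the Cartan number $m$ is exactly what is used. The degenerate ranges $p<0$ and $q<0$ need only a glance, since the argument above applies verbatim and in both cases $\E^{pq}_r$ and $\E^{pq}_\infty$ collapse to $0$ (for $p<0$ because $\Om^{p+q}_{p+1}(\cU,\cV)=\Om^{p+q}_p(\cU,\cV)$ forces $\Z^{pq}_\infty=\Z^{p+1,q-1}_\infty$, and for $q<0$ because $\Om^{p+q}_p(\cU,\cV)=0$).
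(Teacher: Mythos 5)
Your argument is correct. Note, though, that the paper does not actually write out a proof here: it simply appeals to ``the general properties of spectral sequences'' with a reference to Godement, i.e.\ to the standard fact that a filtration which is bounded in each total degree forces the sequence to degenerate at a finite stage. What you have done is unpack exactly that citation into a self-contained verification: since the paper defines $\E^{pq}_r$ directly as the subquotient $\Z^{pq}_r\big/\big(\B^{pq}_{r-1}+\Z^{p+1,q-1}_{r-1}\big)$ of the fixed module $\Om^{p+q}_p(\cU,\cV)$, your observation that $\Z^{pq}_r=\Z^{pq}_\infty$ and $\Z^{p+1,q-1}_{r-1}=\Z^{p+1,q-1}_\infty$ for $r\ge q+2$ (because $\Om^{p+q+1}_{p+r}=0$ once $p+r>p+q+1$) and that $\B^{pq}_{r-1}=\B^{pq}_\infty$ for $r\ge m-q+2$ (by the Cartan equality $\Om^{p+q-1}_{p-r+1}=\Om^{p+q-1}$) yields literal equality $\E^{pq}_r=\E^{pq}_\infty$ beyond the explicit threshold $r\ge\max\{q+2,\,m-q+2\}$, which is stronger and cleaner than merely invoking convergence; your treatment of the conventions for $p\le 0$ and $p>p+q$ matches the natural extension of the paper's definition of $p$-Cartan forms, so the subscript-$\infty$ groups really are the stabilized ones. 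One small remark: for a \emph{fixed} pair $(p,q)$ the stabilization of $\B^{pq}_{r-1}$ already follows from $p-r+1\le 0$ (the filtration starts at $\Om^n_0=\Om^n$), so the Cartan hypothesis is not strictly needed pointwise; what it buys, and what your bound exhibits, is a threshold independent of the filtration degree $p$, which is the form in which the degeneration is used later (e.g.\ $E^{pq}_r=E^{pq}_2$ for $r\ge2$ in the regular case). Your closing check that the outgoing differentials $d^{pq}_r$ vanish in the stable range is consistent with this and harmless.
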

\begin{proof} 
The existence of the limit follows from the general properties of spectral sequences, 
see \cite{RG} for full detail.
\end{proof}

In applications, as a rule, only separate terms of the spectral sequence are used. 
Thus, in the algebra-geometrical approach to partial differential equations 
the Cartan spectral sequence $\{\E^{pg}_r,d^{pq}_r\mid p,r\in\bbZ_+,0\le q\le m\}$ 
($m$ is the Cartan dimension) arises (see, \cite{AV} and, for example, \cite{Z6}) 
and the important role play the following terms: 
\begin{itemize}  
	\item 
		$\E^{0m}_1$ is the space of {\it functionals}, 
		elements of the equivalence classes are called {\it Lagrangians};
	\item 
		$\E^{0,m-1}_1$ is the space of {\it conservation laws}, 
		elements of the equivalence classes are called {\it conserved currents}; 
	\item 
		$\E^{0,q}_1$, $0\le q\le m-2$, are the spaces of 
		{\it conservation laws of lower order};
	\item 
		$\E^{pm}_1$, $p\in\bbN$, are the spaces of the {\it functional $p$-forms}; 
	\item 
		$d^{pm}_1 : \E^{pm}_1\to\E^{p+1,m}_1$, $p\in\bbZ_+$, 
		are {\it functional (variation) differentials}, 
		the differential $\de=d^{0m}_1$ is called the {\it Euler-Lagrange operator}.
\end{itemize}

\begin{defi} 
The quotient Lie algebra of {\it symmetries} of  a differential algebra 
(a differential module)  $\cU$ is defined as $\Sym_\cD\cU=\fD_\cD(\cU)\big/\cD(\cU)$.
\end{defi}

\begin{prop} 
For any $[X]=X+\cD(U)\in\Sym_\cD\cU$, $X\in\fD_\cD(\cU)$, 
the following quotient morphisms are defined: 
\begin{itemize} 
	\item 
		$\big(i_{[X]}\big)_r : \E^{pq}_r\to\E^{p-1,q}_r$, \quad 
		$[\om]^{pq}_r\mapsto[i_X\om]^{p-1,q}_r$; 
	\item 
		$\big(L_{[X]}\big)_1 :  \E^{pq}_1\to\E^{pq}_1$, \quad 
		$[\om]^{pq}_1\mapsto[L_X\om]^{pq}_1$.
\end{itemize}
\end{prop}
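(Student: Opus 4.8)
The plan is to check, for each of the two assignments, that the operator is compatible with the subquotient structure $\E^{pq}_r=\Z^{pq}_r/(\B^{pq}_{r-1}+\Z^{p+1,q-1}_{r-1})$: it must carry the numerator $\Z^{pq}_r$ into the numerator of the target term, carry the denominator into the denominator of the target term, and --- this is exactly what makes the result depend only on the coset $[X]\in\Sym_\cD\cU$ and not on the chosen lift $X\in\fD_\cD(\cU)$ --- produce an element of the target denominator whenever the lift is Cartan, i.e. $X\in\cD(\cU)$. The tools are the Cartan magic formula $L_X=d\com i_X+i_X\com d$, the commutation $d\com L_X=L_X\com d$, and the filtration behaviour recorded in the Propositions above: for $X\in\fD_\cD(\cU)$ one has $L_X\Om^r_p(\cU,\cV)\subset\Om^r_p(\cU,\cV)$; for $X\in\fD(\cU)$ one has $i_X\Om^r_p(\cU,\cV)\subset\Om^{r-1}_{p-1}(\cU,\cV)$; and for $X\in\cD(\cU)$ one has $i_X\Om^r_p(\cU,\cV)\subset\Om^{r-1}_p(\cU,\cV)$.

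First I would settle $(L_{[X]})_1$. If $\om\in\Z^{pq}_1$, then $L_X\om\in\Om^{p+q}_p(\cU,\cV)$ by the Lie--B\"aclund property and $d(L_X\om)=L_X(d\om)\in\Om^{p+q+1}_{p+1}(\cU,\cV)$ by filtration preservation, so $L_X\om\in\Z^{pq}_1$; the same two facts show $L_X$ preserves $\B^{pq}_0=d\,\Om^{p+q-1}_p(\cU,\cV)$ and $\Z^{p+1,q-1}_0=\Om^{p+q}_{p+1}(\cU,\cV)$, hence $L_X$ descends to $\E^{pq}_1$. If in addition $X\in\cD(\cU)$, the magic formula gives $L_X\om=d(i_X\om)+i_X(d\om)$ with $i_X\om\in\Om^{p+q-1}_p(\cU,\cV)$, so that $d(i_X\om)\in\B^{pq}_0$, and with $i_X(d\om)\in\Om^{p+q}_{p+1}(\cU,\cV)$ since $d\om$ is $(p+1)$-Cartan and $X$ is Cartan; hence $L_X\om$ lies in the denominator and $[L_X\om]^{pq}_1=0$. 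This proves $(L_{[X]})_1$ is well defined.

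For $(i_{[X]})_r$ the scheme is identical, with the degree shift of $i_X$ inserted. For $\om\in\Z^{pq}_r$ the form $i_X\om$ lies in $\Om^{p+q-1}_{p-1}(\cU,\cV)$, the bidegree matching $\E^{p-1,q}_r$, and from the magic formula $d(i_X\om)=L_X\om-i_X(d\om)$, where $i_X(d\om)\in\Om^{p+q}_{p+r-1}(\cU,\cV)$ because $d\om\in\Om^{p+q+1}_{p+r}(\cU,\cV)$ --- precisely the Cartan degree demanded by $\Z^{p-1,q}_r$. One then checks, again via the magic formula applied to $i_X(d\chi)$ together with the filtration bounds on $\chi$, that $i_X$ maps $\B^{pq}_{r-1}+\Z^{p+1,q-1}_{r-1}$ into $\B^{p-1,q}_{r-1}+\Z^{p,q-1}_{r-1}$, and that when $X\in\cD(\cU)$ the form $i_X\om$ itself lands in the target denominator; granting this, $(i_{[X]})_r$ is well defined and the homotopy identity $d_r\com(i_{[X]})_r+(i_{[X]})_r\com d_r=(L_{[X]})_r$ on $\E_r$ descends formally from the magic formula. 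The step I expect to be the real obstacle is the Cartan-degree bookkeeping for $d(i_X\om)$ when $r\ge2$: the Lie--B\"aclund hypothesis only delivers $L_X\om\in\Om^{p+q}_p(\cU,\cV)$, whereas membership of $i_X\om$ in $\Z^{p-1,q}_r$ asks for $d(i_X\om)\in\Om^{p+q}_{p+r-1}(\cU,\cV)$; closing this gap uses that $\om$ is merely one representative of a class on which $d_1,\dots,d_{r-1}$ vanish, so that $L_X\om$ --- which itself lies in $\Z^{pq}_r$ --- can be traded, modulo the denominator of $\E^{p-1,q}_r$ and after adjusting $i_X\om$ by an exact form of the appropriate filtration degree, for a form of Cartan degree $p+r-1$. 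In other words, at this point one must invoke the spectral-sequence machinery of the earlier Propositions rather than argue by filtration arithmetic alone.
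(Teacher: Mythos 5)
Your treatment of $(L_{[X]})_1$ is complete and correct, and your argument for $(i_{[X]})_r$ is in fact sound precisely when $r\le 1$: there $p+r-1=p$, so the bound $L_X\om\in\Om^{p+q}_p(\cU,\cV)$ coming from the Lie--B\"acklund property is exactly what membership in $\Z^{p-1,q}_1$ requires. For $r\ge 2$, however, the difficulty you flag is a genuine gap, and it is not confined to the numerator. Membership $i_X\om\in\Z^{p-1,q}_r$ needs $d(i_X\om)=L_X\om-i_X(d\om)\in\Om^{p+q}_{p+r-1}(\cU,\cV)$, while the hypothesis only yields $L_X\om\in\Om^{p+q}_p(\cU,\cV)$. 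The same defect undermines the step you dismissed with ``one then checks'': for $d\chi\in\B^{pq}_{r-1}$ with $\chi\in\Om^{p+q-1}_{p-r+1}(\cU,\cV)$ one gets $i_X(d\chi)=L_X\chi-d(i_X\chi)$ with $L_X\chi$ only in $\Om^{p+q-1}_{p-r+1}(\cU,\cV)$, which does not place it in $\B^{p-1,q}_{r-1}+\Z^{p,q-1}_{r-1}$; and even independence of the representative $X$ (the case $X\in\cD(\cU)$, where one wants $i_X\om$ in the target denominator, hence in particular $d(i_X\om)\in\Om^{p+q}_{p+r-1}(\cU,\cV)$) again only follows when $r=1$. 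So for $r\ge2$ none of the three required inclusions is delivered by the filtration arithmetic you rely on.

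The remedy you gesture at --- adjusting $i_X\om$ by exact forms and ``invoking the spectral-sequence machinery'' --- meets a structural obstruction, not a bookkeeping one: the relation $d\com i_X+i_X\com d=L_X$ descends on $\E_1$ to $d_1\com(i_{[X]})_1+(i_{[X]})_1\com d_1=(L_{[X]})_1$, so $(i_{[X]})_1$ carries $\ke d_1$ into $\ke d_1$ only where $(L_{[X]})_1$ vanishes, and the standard ``induced map on cohomology'' mechanism therefore does not by itself produce $(i_{[X]})_r$ for $r\ge2$. Any complete argument must explain why the obstruction classes $[L_X\om]$ die on the relevant pages --- for instance, in the regular situation one can replace $X$ by the vertical representative of $[X]$ in $\cE$, for which the interior product anticommutes with $d_H$, and use the degeneration $\E^{pq}_2=\E^{pq}_\infty$; but that is Section 5 machinery, not the filtration bounds quoted here. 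As written, your proposal proves the statement for $(L_{[X]})_1$ and for $(i_{[X]})_r$ with $r\le1$ only; the case $r\ge2$ remains open.
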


\section{Variation bicomplexes.}\label{S5}
Here we keep all notations and assumptions of the previous section. 
\begin{ass} 
In addition to the Assumption \ref{A1}, 
assume that the differential algebra $(\cA,\cD)$ and the differential module 
$(\cM,\ka=\ka_\cM,\bcD)$ are regular. 
In this case the Cartan dimensions are 
$m_\cA=\dim_\cA\cD$, $m_\cM=\dim_\cA\bcD$. 
\end{ass}

The splitting of the algebras $\fD(\cA)$ and $\fD(\cK)$  
into vertical and horizontal parts allows to refine the spectral sequence 
into the variation bicomplex (the detailed information about bicomplexes see, 
for example, in \cite{SM} or \cite{Z3}. 

\begin{defi} 
The $\cA$-modules $\Om^{pq}(\cU,\cV)$, $p,q\in\bbZ$,  are defined by the rule 
\begin{equation*} 
	\Om^{pq}(\cU,\cV)=
	\begin{cases}0,& p<0 \text{ and/or } q<0, \\ 
	 \cV, & p=q=0, \\ 
	\Hom_\cA((\w^p_\cA\fD_V(\cU))\w_\cA(\w^q_\cA\fD_H(\cU));\cV), &  p+q>0.  
	\end{cases} 
\end{equation*}
\end{defi} 
In particular, 
$\Om^p_V(\cU;\cV)=\Om^{p0}(\cU;\cV)$, $\Om^q_H(\cU;\cV)=\Om^{0q}(\cU;\cV)$ 
are the $\cA$-modules of  the {\it vertical} and the {\it horizontal} forms, correspondingly. 
Moreover, in general, 
$\Om^{pq}(\cU;\cV)=\Om^p_V(\cU;\cA)\w_\cA\Om^q_H(\cU;\cA)\otimes_\cA\cV$. 

Note, $\Om^q_H(\cU,\cV)=0$ for all $q>m_\cU$.

The splitting $\fD(\cU)=\fD_V(\cU)\oplus_\cA\fD_H(\cU)$ defines 
the projections 
\begin{equation*}
	\pi_{V,H} : \fD(\cU)\to \fD_{V,H}(\cU), \quad
	X=X_V+X_H\mapsto\pi_{V,H}X=X_{V,H}. 
\end{equation*}
Hence, there are defined the dual injections 
\begin{equation*}
	\pi^*_{V,H} : \Om^r_{V,H}(\cU,\cV)\to\Om^r(\cU,\cV),
	\quad\om\mapsto\pi^*_{V,H}\om=\om\com\pi_{V,H}, 
\end{equation*}
where 
$\pi^*_{V,H}\om(X_1,\dots,X_r)=\om(\pi_{V,H}X_1,\dots,\pi_{V,H}X_s)$, 
$X_1,\dots,X_r\in\fD(\cU)$. 

\begin{prop} 
The identifications $\pi^*_{V,H}\om=\om$ define the representations 
\begin{equation*} 
	\Om^r(\cU,\cV)=\oplus_{s\in\bbZ}\Om^{s,r-s}(\cU,\cV), 
	\quad\text{and}\quad 
	\Om^r_p(\cU,\cV)=\oplus_{s\ge p}\Om^{s,r-s}(\cU,\cV).
\end{equation*}
In particular, the $\cA$-module 
$\Om(\cU,\cV)=\oplus_{p,q\in\bbZ}\Om^{pq}(\cU,\cV)$ 
is bigraded. 
\end{prop}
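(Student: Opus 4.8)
The plan is to unwind the definitions and reduce everything to a linear-algebra statement about decomposing multilinear forms according to how many arguments are fed into the vertical versus horizontal part of each differentiation. First I would recall that the splitting $\fD(\cU)=\fD_V(\cU)\oplus_\cA\fD_H(\cU)$ is a direct sum of $\cA$-modules, so for each $r$ the exterior power $\w^r_\cA\fD(\cU)$ decomposes as $\oplus_{s}\big((\w^s_\cA\fD_V(\cU))\w_\cA(\w^{r-s}_\cA\fD_H(\cU))\big)$; dualizing (applying $\Hom_\cA(-;\cV)$, which turns direct sums into direct products, but here the sum is finite so this is harmless) gives $\Om^r(\cU,\cV)=\oplus_{s\in\bbZ}\Om^{s,r-s}(\cU,\cV)$ at the level of $\cA$-modules. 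The content of the proposition is that this abstract decomposition is realized concretely by the injections $\pi^*_{V,H}$, i.e. that a form $\om\in\Om^r(\cU,\cV)$ is recovered from its ``components'' $\pi^*_V\om$, $\pi^*_H\om$, and more generally from the mixed components obtained by applying $\pi_V$ to some slots and $\pi_H$ to the others.

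The key steps, in order: (1) Given $\om\in\Om^r(\cU,\cV)$, define for each subset pattern — equivalently for each $s$ — a form $\om_{s,r-s}$ by the rule $\om_{s,r-s}(X_1,\dots,X_r)=\sum \pm\,\om(\dots)$ where the sum runs over ways of inserting the vertical projections of $s$ of the arguments and the horizontal projections of the remaining $r-s$; by $\cA$-multilinearity and antisymmetry this is a well-defined element of $\Om^{s,r-s}(\cU,\cV)$, i.e. it vanishes unless exactly $s$ arguments are vertical and $r-s$ horizontal. (2) Check that $\sum_s \om_{s,r-s}=\om$: evaluate both sides on arbitrary $X_1,\dots,X_r$, write each $X_i=X_{i,V}+X_{i,H}$, expand $\om$ by multilinearity into $2^r$ terms, and observe that these terms are exactly regrouped by the $\om_{s,r-s}$. (3) Check that the decomposition is direct: if $\sum_s\chi_{s,r-s}=0$ with $\chi_{s,r-s}\in\Om^{s,r-s}(\cU,\cV)$, feed in $s$ vertical and $r-s$ horizontal differentiations to isolate $\chi_{s,r-s}=0$. (4) For the second identity, note $\om\in\Om^r_p(\cU,\cV)$ means $\om$ kills any tuple with at least $r-p+1$ Cartan (= horizontal) arguments; since the horizontal differentiations are precisely the Cartan ones (here $\fD_H(\cU)=\cD(\cU)$), a form in $\Om^{s,r-s}$ automatically lies in $\Om^r_s$, and conversely $\om\in\Om^r_p$ forces $\om_{s,r-s}=0$ for $s<p$, giving $\Om^r_p(\cU,\cV)=\oplus_{s\ge p}\Om^{s,r-s}(\cU,\cV)$. (5) Finally, summing over $r$ and reindexing by $(p,q)=(s,r-s)$ yields the bigrading $\Om(\cU,\cV)=\oplus_{p,q\in\bbZ}\Om^{pq}(\cU,\cV)$.

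The main obstacle is bookkeeping rather than conceptual: one must be careful that the ``component'' forms $\om_{s,r-s}$ are genuinely $\cA$-linear and alternating — the projections $\pi_{V,H}$ are $\cA$-module maps (the splitting in Definition~\ref{D6} and its module analogue is an $\cA$-module, resp. $\fM$-module, splitting), so $\pi^*_{V,H}\om$ inherits $\cA$-multilinearity, and antisymmetry is preserved because we antisymmetrize consistently over all slots. A secondary point to get right is the identification of the Cartan filtration with the ``number of horizontal slots'' filtration: this uses exactly that in the regular case the horizontal subalgebra $\fD_H(\cU)$ coincides with the Cartan subalgebra $\cD(\cU)$, so ``at least $r-p+1$ arguments Cartan'' and ``at most $p-1$ arguments vertical'' say the same thing, which is what matches the filtration $\Om^r_p=\oplus_{s\ge p}\Om^{s,r-s}$ to the one already established in the earlier propositions. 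Once these two points are pinned down, steps (2)–(5) are routine expansions.
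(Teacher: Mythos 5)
Your proposal is correct. The paper states this proposition without giving a proof, and your argument — decomposing $\w^r_\cA\fD(\cU)$ along the $\cA$-module splitting $\fD(\cU)=\fD_V(\cU)\oplus_\cA\fD_H(\cU)$, recovering $\om$ from its components via the projections $\pi_{V,H}$, and then using that in the regular case $\fD_H(\cU)=\cD(\cU)$ so that ``at least $r-p+1$ Cartan arguments'' annihilates exactly the components with $s<p$ — is precisely the standard verification the author leaves implicit, with the finiteness of the range $0\le s\le r$ correctly invoked to pass $\Hom_\cA(-;\cV)$ through the direct sum.
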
 

By the previous assumptions, the unital differential algebra $(\cA,\cD)$ is regular, 
i.e., $\fD(\cA)=\fD_V(\cA)\oplus_\cA\fD_H(\cA)$, 
the vertical subalgebra $\fD_V(\cA)$ has an $\cA$-basis $\p=\{\p_a\mid a\in\ba\}$, 
$[\p_a,\p_b]=0$,
while the horizontal subalgebra $\fD_H(\cA)=\cD$ has an $\cA$-basis 
$D=\{D_\mu\mid\mu\in\bm\}$, $[D_\mu,D_\nu]=0$. 

In this case, the dual $\cA$-module 
$\fD^*_V(\cA)=\Hom_\cA(\fD_V(\cA);\cA)=\Om^1_V(\cA,\cA)$ 
has the dual $\cA$-basis $\rho=\{\rho^a\in\fD^*_V(\cA)\mid a\in\ba\}$, 
$\rho^a(\p_b)=\de^a_b$. 
In particular, $\om(X)=\om_a\rho^a(X^b\p_b)=\om_aX^a$ for all 
$\om\in\fD^*_V(\cA)$, $X\in\fD_V(\cA)$, while $\om(X)=0$ for any 
$\om\in\fD^*_V(\cA)$, $X\in\fD_H(\cA)$. 

The dual $\cA$-module 
$\fD^*_H(\cA)=\Hom_\cA(\fD_H(\cA);\cA)=\Om^1_H(\cA,\cA)$ 
has the dual $\cA$-basis $\vte=\{\vte^\mu\mid \mu\in\bm\}$, 
$\vte^\mu(D_\nu)=\de^\mu_\nu$. 
In particular, $\om(X)=\om_\mu\vte^\mu(X^\nu D_\nu)=\om_\mu X^\mu$ 
for all $\om\in\fD^*_H(\cA)$, $X\in\fD_H(\cA)$, 
while $\om(X)=0$ for any $\om\in\fD_H(\cA)$, $X\in\fD_V(\cA)$.

The same is true for the $(\cA,\cD)$-module $(\cM,\ka=\ka_\cM,\cD)$. 
Namely\footnote{We simplify the notation, for convenience.}, 
$\fD(\cM)=\fD_V(\cM)\oplus_\cA\fD_H(\cM)$, 
the vertical subalgebra $\fD_V(\cM)$ has an $\cA$-basis 
$\bp=\{\bp_s\mid s\in\ba_\cM\}$, $[\bp_s,\bp_t]=0$. 
Further, the horizontal subalgebra $\fD_H(\cM)$ has an $\cA$-basis 
$D=\{D_\si\mid \si\in\bm_\cM\}$, $[\bD_\si,\bD_\tau]=0$.  

Thus, the dual $\cA$-module 
$\fD^*_V(\cM)=\Hom_\cA(\fD_V(\cM);\cA)=\Om^1_V(\cM,\cA)$ 
has the dual $\cA$-basis 
$\brho=\{\brho^s\mid s\in\ba_\cM\}$, $\brho^s(\bp_t)=\de^s_t$.  
In particular, $\om(X)=\om_s\brho^s(X^t\bp_t)=\om_s X^s$ for all 
$\om\in\fD^*_V(\cM)$, $X\in\fD_V(\cM)$, while $\om(X)=0$ for any 
$\om\in\fD^*_V(\cM)$, $X\in\fD_H(\cM)$. 

The dual $\cA$-module  
$\fD^*_H(\cM)=\Hom_\cA(\fD_H(\cM);\cA)=\Om^1_H(\cM,\cA)$ 
has the dual $\cA$-basis $\bvte=\{\bvte^\si\mid \si\in\bm_\cM\}$, 
$\bvte^\si(\bD_\tau)=\de^\si_\tau$. 
In particular, $\om(X)=\om_\si\bvte^\si(X^\tau \bD_\tau)=\om_\si X^\si$ 
for all $\om\in\fD^*_H(\cM)$, $X\in\fD_H(\cM)$, 
while $\om(X)=0$ for any $\om\in\fD_H(\cM)$, $X\in\fD_V(\cM)$. 

\begin{ass} 
Below we further simplify the notation and write: 
$\p,D$ for the vertical and the horizontal bases in $\fD(\cU)$ 
and $\rho,\vte$ for the dual bases. 
\end{ass}

\begin{prop}\label{P17} 
The representations  
\begin{equation*} 
	\Om^{pq}(\cU,\cV)
	=\bigg\{\om=\frac1{p!q!}\om_{a_1\dots a_p\mu_1\dots\mu_q}
	\cdot\rho^{a_1}\w\dots\w\rho^{a_p}\w\vte^{\mu_1}\w\dots\w\vte^{\mu_q}\bigg\}
\end{equation*}
hold for all $p\in\bbZ_+$ and $0\le q\le m$, where the coefficients 
$\om_{a_1\dots a_p\mu_1\dots\mu_q}\in\cV$ are skew-symmetric 
in indices $a_1,\dots,\mu_q$.
\end{prop}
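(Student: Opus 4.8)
The plan is to deduce the representation from the freeness of the vertical and horizontal differentiation modules. By the defining formula, $\Om^{pq}(\cU,\cV)=\Hom_\cA\big((\w^p_\cA\fD_V(\cU))\w_\cA(\w^q_\cA\fD_H(\cU));\cV\big)$ whenever $p+q>0$, while $\Om^{00}(\cU,\cV)=\cV$ is the trivial case, the ``coefficient'' carrying no indices being the element itself; so assume $p+q\ge1$. Recall also that $\Om^q_H(\cU,\cV)=0$ once $q>m=m_\cU$, which is why the statement restricts to $0\le q\le m$. Since $\fD_V(\cU)$ is a free $\cA$-module with basis $\p=\{\p_a\mid a\in\ba\}$ and $\fD_H(\cU)$ is a free $\cA$-module with basis $D=\{D_\mu\mid\mu\in\bm\}$, after fixing total orders on $\ba$ and $\bm$ the module $(\w^p_\cA\fD_V(\cU))\w_\cA(\w^q_\cA\fD_H(\cU))$ is free with basis the wedges $\p_{a_1}\w\dots\w\p_{a_p}\w D_{\mu_1}\w\dots\w D_{\mu_q}$ having $a_1<\dots<a_p$ and $\mu_1<\dots<\mu_q$. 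Hence a form $\om\in\Om^{pq}(\cU,\cV)$ amounts to an arbitrary family of values in $\cV$ indexed by these basis elements.

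First I would introduce the coefficients: for strictly increasing index tuples put $\om_{a_1\dots a_p\mu_1\dots\mu_q}\in\cV$ equal to the value of $\om$ on the corresponding basis vector, rescaled by the constant forced by the contraction normalization $i_X\om(X_1,\dots,X_{r-1})=r\om(X,X_1,\dots,X_{r-1})$ of Proposition \ref{P10}; then extend the symbol to all index tuples by imposing skew-symmetry separately in the $a$'s and in the $\mu$'s, so it vanishes whenever two vertical or two horizontal indices coincide. Since $\ba$ may be infinite, I read the proposed identity
\begin{equation*}
	\om=\frac1{p!q!}\,\om_{a_1\dots a_p\mu_1\dots\mu_q}\,
	\rho^{a_1}\w\dots\w\rho^{a_p}\w\vte^{\mu_1}\w\dots\w\vte^{\mu_q}
\end{equation*}
in the weak sense allowed by the standing summation conventions: applied to any finite collection of differentiations, each a finite $\cA$-combination of the $\p_a$ and $D_\mu$, the right-hand side is a finite sum.

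Then I would verify the identity by testing both sides on an arbitrary ordered basis vector $\p_{b_1}\w\dots\w\p_{b_p}\w D_{\nu_1}\w\dots\w D_{\nu_q}$. Using $\rho^a(\p_b)=\de^a_b$ and $\vte^\mu(D_\nu)=\de^\mu_\nu$, the vanishing of $\rho^a$ on $\fD_H(\cU)$ and of $\vte^\mu$ on $\fD_V(\cU)$, and the fact (Proposition \ref{P10}) that contraction acts on the wedge product as an exterior differentiation, the right-hand side collapses to a sum over the $p!$ relabellings of the vertical indices and the $q!$ relabellings of the horizontal indices that match the test vector; by the joint skew-symmetry of the coefficients and of the exterior product each of these $p!q!$ terms equals $\om_{b_1\dots b_p\nu_1\dots\nu_q}$ with one and the same sign, so the prefactor $\frac1{p!q!}$ cancels and one recovers the value of $\om$ on that basis vector --- and this is precisely where the rescaling constant must be tuned against the $r$-factors of the contraction rule. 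Linearity over $\cA$ together with the weak-sum convention then propagates the equality to all of $(\w^p_\cA\fD_V(\cU))\w_\cA(\w^q_\cA\fD_H(\cU))$, and re-evaluating the representation on the ordered basis vectors shows the skew-symmetric coefficients are uniquely determined. I expect the only genuinely delicate step to be this bookkeeping of the combinatorial constants $p!$, $q!$ and the contraction factors; the structural content --- freeness of $\fD_{V,H}(\cU)$ and $\cA$-linearity of forms --- is routine.
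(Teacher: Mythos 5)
Your argument is correct and is essentially the intended one: the paper states Proposition \ref{P17} without proof, as an immediate consequence of regularity (the free $\cA$-bases $\p$, $D$ and their dual bases $\rho$, $\vte$), and your reduction to evaluation on ordered basis wedges, with the coefficients extended skew-symmetrically and the factor $\frac{1}{p!q!}$ cancelling the permutation count, is exactly that standard verification. The only loose end, the precise normalization constant tied to the convention $i_X\om(X_1,\dots,X_{r-1})=r\,\om(X,X_1,\dots,X_{r-1})$ inherited from \cite{Z}, you flag explicitly, and it affects only bookkeeping, not the structure of the proof.
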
 

The exterior differentiation 
$d : \Om^{pq}(\cU,\cV)\to\Om^{p+1,q}(\cU,\cV)\oplus_\bbF\Om^{p,q+1}(\cU,\cV)$ 
also splits into {\it vertical} and {\it horizontal} parts, $d=d_V+d_H$, where 
\begin{equation*} 
	d_V : \Om^{pq}(\cU,\cV)\to\Om^{p+1.q}(\cU,\cV), \quad 
	d_H : \Om^{pq}(\cU,\cV)\to\Om^{p,q+1}(\cU,\cV).
\end{equation*} 

Indeed, the Cartan formula gives the following results: 
\begin{itemize} 
	\item 
		$dv=\vk\p_a v\cdot\rho^a+\vk D_\mu v\cdot\vte^\mu$ for any $v\in\cV$, 
		hence \\
		$d_V v=\vk\p_a v\cdot\rho^a$, \quad $d_H v=\vk D_\mu v\cdot\vte^\mu$; 
	\item 
		$d\rho^a=\Ga^a_{\mu b}\cdot\rho^b\w\vte^\mu$ for any $\rho^a\in\rho$, 
		hence \\
		$d_V\rho^a=0$, \quad $d_H\rho^a=\Ga^a_{\mu b}\cdot\rho^b\w\vte^\mu$; 
	\item 
		$d\vte^\mu=0$ for any $\vte^\mu\in\vte$, hence $d_{V,H}\vte^\mu=0$.
\end{itemize} 
Due to Propositions \ref{P12} and \ref{P17} these formulas allow to calculate 
differentials $d_V\om$ and $d_H\om$ for any form $\om\in\Om^{pq}(\cU,\cV)$. 

\begin{prop} 
The equality $d\com d=0$ implies the equalities:
\begin{equation*} 
	d_V\com d_V=d_V\com d_H+d_H\com d_V=d_H\com d_H=0.
\end{equation*}
\end{prop}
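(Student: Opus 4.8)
The plan is to exploit the bigrading $\Om(\cU,\cV)=\oplus_{p,q\in\bbZ}\Om^{pq}(\cU,\cV)$ established above, together with the bidegrees of the two pieces of $d$. First I would record the purity of $d_V$ and $d_H$: by the explicit action of $d$ on the generators $v\in\cV$, $\rho^a$, $\vte^\mu$ displayed before the statement, combined with Propositions \ref{P12} and \ref{P17}, the differential $d$ applied to a monomial $\tfrac1{p!q!}\om_{a_1\dots\mu_q}\cdot\rho^{a_1}\w\dots\w\vte^{\mu_q}\in\Om^{pq}(\cU,\cV)$ produces only terms of bidegree $(p+1,q)$ and of bidegree $(p,q+1)$; collecting these gives precisely $d=d_V+d_H$ with $d_V$ of bidegree $(1,0)$ and $d_H$ of bidegree $(0,1)$.

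Next I would expand the hypothesis $d\com d=0$. Composing $d=d_V+d_H$ with itself yields
\begin{equation*}
	0=d\com d=d_V\com d_V+\big(d_V\com d_H+d_H\com d_V\big)+d_H\com d_H .
\end{equation*}
Applied to an arbitrary $\om\in\Om^{pq}(\cU,\cV)$, the first summand lies in $\Om^{p+2,q}(\cU,\cV)$, the second in $\Om^{p+1,q+1}(\cU,\cV)$, and the third in $\Om^{p,q+2}(\cU,\cV)$. Since the bigrading is a genuine direct-sum decomposition of $\Om(\cU,\cV)$, these three target modules are independent summands, so a sum of one element from each can vanish only if every summand vanishes separately. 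Hence $d_V\com d_V=0$, $d_V\com d_H+d_H\com d_V=0$, and $d_H\com d_H=0$ on $\Om^{pq}(\cU,\cV)$; as $p,q$ are arbitrary, the three identities hold on all of $\Om(\cU,\cV)$.

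There is essentially no obstacle here: the entire content is the observation that an identity between elements of a bigraded module forces each homogeneous component to vanish. The only point deserving a line of justification is the purity claim of the first paragraph, namely that the decomposition $d=d_V+d_H$ announced before the statement truly has no component of bidegree other than $(1,0)$ and $(0,1)$ — and this is immediate from the generator formulas, since none of $dv$, $d\rho^a$, $d\vte^\mu$ contains a term that changes both $p$ and $q$.
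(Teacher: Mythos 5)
Your argument is correct and is exactly the one the paper leaves implicit: having recorded that $d_V$ has bidegree $(1,0)$ and $d_H$ bidegree $(0,1)$ with respect to the direct-sum decomposition $\Om(\cU,\cV)=\oplus_{p,q}\Om^{pq}(\cU,\cV)$, expanding $d\com d=0$ and separating the components of bidegrees $(p+2,q)$, $(p+1,q+1)$, $(p,q+2)$ forces each to vanish. Nothing more is needed, and your extra care in checking purity of $d_V,d_H$ on the generators $v,\rho^a,\vte^\mu$ is precisely the justification the paper's preceding formulas supply.
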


\begin{prop} 
There is defined the variation bicomplexe 
\begin{equation*}
	\{\Om^{pq}(\cU,\cV);d^{pq}_V,d^{pq}_H\mid p,q\in\bbZ\}, 
	\quad\text{where}\quad d^{pq}_{V,H}=d_{V,H}|_{\Om^{pq}(\cU,\cV)}. 
\end{equation*} 
\end{prop}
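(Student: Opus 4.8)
The plan is to verify that the data $\{\Om^{pq}(\cU,\cV);d^{pq}_V,d^{pq}_H\}$ satisfies the three defining relations of a bicomplex, namely that the two differentials have the correct bidegrees, each squares to zero, and the two anticommute; all the structural facts needed for this have in fact already been assembled in the preceding propositions, so the proof amounts to collecting them in the right order. First I would recall that $\Om(\cU,\cV)=\oplus_{p,q\in\bbZ}\Om^{pq}(\cU,\cV)$ is bigraded (with $\Om^{pq}(\cU,\cV)=0$ unless $p\in\bbZ_+$ and $0\le q\le m_\cU$), which is the content of the identification $\pi^*_{V,H}\om=\om$ used above. Then I would invoke Proposition \ref{P12}, by which $d\in\End_\bbF(\Om(\cU,\cV))$ is an exterior differentiation raising the total degree by one, is a derivation of $\Om(\cU,\cA)$ and of the $\Om(\cU,\cA)$-module $\Om(\cU,\cV)$, and satisfies $d\com d=0$.

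The one step requiring actual work is to show that $d$ sends $\Om^{pq}(\cU,\cV)$ into $\Om^{p+1,q}(\cU,\cV)\oplus_\bbF\Om^{p,q+1}(\cU,\cV)$ and into no other bidegree, so that the decomposition $d=d_V+d_H$ into its $(1,0)$- and $(0,1)$-components is well defined. For this I would use that $d$, being a derivation, is determined by its values on the generators $v\in\cV$, $\rho^a$, $\vte^\mu$; the Cartan formula gives $dv=\vk\p_a v\cdot\rho^a+\vk D_\mu v\cdot\vte^\mu$, $d\rho^a=\Ga^a_{\mu b}\cdot\rho^b\w\vte^\mu$, $d\vte^\mu=0$, each of which lies purely in bidegrees $(p+1,q)$ and $(p,q+1)$ relative to the argument. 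Writing an arbitrary $\om\in\Om^{pq}(\cU,\cV)$ in the normal form of Proposition \ref{P17} as a sum of products $\om_{a_1\dots a_p\mu_1\dots\mu_q}\cdot\rho^{a_1}\w\dots\w\vte^{\mu_q}$ and applying the Leibniz rule, the output stays in $\Om^{p+1,q}(\cU,\cV)\oplus_\bbF\Om^{p,q+1}(\cU,\cV)$: no component of bidegree $(p+2,q-1)$, $(p-1,q+2)$, or any other can appear, since the generators produce none and the Leibniz rule preserves this property. Thus $d^{pq}_V=d_V|_{\Om^{pq}(\cU,\cV)}$ and $d^{pq}_H=d_H|_{\Om^{pq}(\cU,\cV)}$ are defined, of bidegrees $(1,0)$ and $(0,1)$ respectively.

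Finally, I would conclude by comparing bidegrees in $d\com d=0$. Since $d=d_V+d_H$, the identity $(d_V+d_H)\com(d_V+d_H)=0$ splits, according to its homogeneous parts of bidegrees $(2,0)$, $(1,1)$, $(0,2)$, into $d_V\com d_V=0$, $d_V\com d_H+d_H\com d_V=0$, $d_H\com d_H=0$ — which is exactly the assertion of the Proposition immediately preceding this one. These three relations, together with the bidegrees established above, are precisely the axioms of a bicomplex for the bigraded $\cA$-module $\Om(\cU,\cV)=\oplus_{p,q}\Om^{pq}(\cU,\cV)$, which proves the statement. I do not anticipate any genuine obstacle here: the only mildly nontrivial point is the bidegree bookkeeping in the middle step, and even there the computation on generators is immediate from the Cartan formula as recorded above.
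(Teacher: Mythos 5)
Your proposal is correct and follows essentially the same route as the paper: the paper also obtains the splitting $d=d_V+d_H$ by evaluating the Cartan formula on the generators $v\in\cV$, $\rho^a$, $\vte^\mu$ (giving $dv=\vk\p_a v\cdot\rho^a+\vk D_\mu v\cdot\vte^\mu$, $d\rho^a=\Ga^a_{\mu b}\cdot\rho^b\w\vte^\mu$, $d\vte^\mu=0$), extends to all of $\Om^{pq}(\cU,\cV)$ via Propositions \ref{P12} and \ref{P17}, and then reads off $d_V\com d_V=d_V\com d_H+d_H\com d_V=d_H\com d_H=0$ from $d\com d=0$ by comparing bidegrees. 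No gaps; your bidegree bookkeeping is exactly the content the paper relies on.
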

 
The vertical and horizontal cohomologies of these bicomplexes are defined as follows: 
\begin{equation*}
	H^{pq}_V(\cU,\cV)=\ke d^{pq}_V\big/\im d^{p-1,q}_V, \quad  
	H^{pq}_H(\cU,\cV)=\ke d^{pq}_H\big/\im d^{p,q-1}_H, 
\end{equation*}
for all $p,q\in\bbZ$. 

To shorten the notation, below in this section  we omit the arguments $\cU$ and $\cV$ 
and write 
$\Om^{pq}$ instead of $\Om^{pq}(\cU,\cV)$, 
$H^{pq}_{V,H}$ instead of $H^{pq}_{V,H}(\cU,\cV)$, $\fD$ instead of $\fD(\cU)$, 
and so on. 

For all $p,q\in\bbZ$ there are defined the second differentials: 
\begin{itemize} 
	\item 
		$d^{pq}_{HV}\in\Hom_\bbF(H^{pq}_V,H^{p,q+1}_V)$, \quad
		$\om^{pq}_V\mapsto d^{pq}_{HV}\om^{pq}_V=[d_H\om^{pq}]_V$, 
	\item 
		$d^{pq}_{VH}\in\Hom_\bbF(H^{pq}_H,H^{p+1,q}_H)$, \quad
		$\om^{pq}_H\mapsto d^{pq}_{VH}\om^{pq}_H=[d_V\om^{pq}]_H$, 
\end{itemize}
where 
\begin{itemize}
	\item
		$\om^{pq}_V=[\om^{pq}]_V=\om^{pq}+d_V\Om^{p-1,q}\in H^{pq}_V$, 
		\ $\om^{pq}\in\Om^{pq}$, \ $d_V\om^{pq}=0$, 
	\item
		$\om^{pq}_H=[\om^{pq}]_H=\om^{pq}+d_H\Om^{p,q-1}\in H^{pq}_H$, 
		\ $\om^{pq}\in\Om^{pq}$, \ $d_H\om^{pq}=0$.
\end{itemize}
Thus, there are defined the complexes 
\begin{equation*}
	\{H^{pq}_V;d^{pq}_{HV}\mid q\in\bbZ\}, \ p\in\bbZ, 
	\quad
	\{H^{pq}_H;d^{pq}_{VH}\mid p\in\bbZ\}, \ q\in\bbZ, 
\end{equation*}
with the second cohomology spaces 
\begin{align*} 
	H^{pq}_{HV}&=\ke d^{pq}_{HV}\big/\im d^{p,q-1}_{HV} \\ 
	                        &=\{\om^{pq}_{HV}
	=\om^{pq}+d_V\Om^{p-1,q}\mid \om^{pq}\in\Om^{pq},
	d_V\om^{pq}=0, d_H\om^{pq}\in d_V\Om^{p-1,q+1}\}, \\
	H^{pq}_{VH}&=\ke d^{pq}_{VH}\big/\im d^{p-1,q}_{VH} \\ 
	                        &=\{\om^{pq}_{VH}
	=\om^{pq}+d_H\Om^{p,q-1}\mid 
	\om^{pq}\in\Om^{pq}, d_H\om^{pq}=0,d_V\om^{pq}\in d_H\Om^{p+1,q-1}\}, 
\end{align*} 
All further differentials and cohomologies are trivial. 

\begin{theorem} 
In the regular case elements of the spectral sequence $\{E^{pq}_r,d^{pq}_r\}$ 
are as follows: 
\begin{itemize} 
	\item 
		$E^{pq}_0=\Om^{p+q}_p\big/\Om^{p+q}_{p+1}=\Om^{pq}$, 
		$d^{pq}_0=d^{pq}_H : \Om^{pq}\to\Om^{p,q+1}$; 
	\item 
		$E^{pq}_1=\ke d^{pq}_H\big/\im d^{p,q-1}_H=H^{pq}_H$, 
		$d^{pq}_1=d^{pq}_{VH} : H^{pq}_H\to H^{p+1,q}_H$; 
	\item 
		$E^{pq}_2=\ke d^{pq}_{VH}\big/\im d^{p-1,q}_{VH}=H^{pq}_{VH}$, 
		$d^{pq}_2=0$; 
	\item 
		$E^{pq}_r=E^{pq}_2=H^{pq}_{VH}$, $d^{pq}_r=0$, $r\ge2$;  
	\item 
		$\lim_{r\to\infty}E^{pq}_r=E^{pq}_2=E^{pq}_\infty$. 
\end{itemize}
\end{theorem}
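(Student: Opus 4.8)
The plan is to read off each page of the spectral sequence directly from the bigraded decomposition $\Om^r_p=\bigoplus_{s\ge p}\Om^{s,r-s}$ recorded above, together with the general formulas already established for $E^{pq}_r$, $\Z^{pq}_r$, $\B^{pq}_r$ and $d^{pq}_r$, and then to deduce degeneration at the page $r=2$ from the triviality of the higher bicomplex cohomologies.

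First I would treat $E_0$ and $E_1$. Since $E^{pq}_r=\Om^{p+q}_p/\Om^{p+q}_{p+1}$ for every $r\le0$, and since $\Om^{p+q}_p=\bigoplus_{s\ge p}\Om^{s,p+q-s}$ while $\Om^{p+q}_{p+1}=\bigoplus_{s\ge p+1}\Om^{s,p+q-s}$, the quotient is exactly $\Om^{pq}$. Under this identification the induced differential $d^{pq}_0:E^{pq}_0\to E^{p,q+1}_0$ is the part of $d=d_V+d_H$ that preserves vertical degree: indeed $d_V$ maps $\Om^{pq}$ into $\Om^{p+1,q}\subset\Om^{p+q+1}_{p+1}$, which dies in $E^{p,q+1}_0=\Om^{p+q+1}_p/\Om^{p+q+1}_{p+1}$, so only $d_H$ survives, whence $d^{pq}_0=d^{pq}_H$. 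Then the recursion $E^{pq}_{r+1}=\ke d^{pq}_r/\im d^{p-r,q+r-1}_r$ at $r=0$ gives $E^{pq}_1=\ke d^{pq}_H/\im d^{p,q-1}_H=H^{pq}_H$. To identify $d^{pq}_1$ I would take a representative $\om\in\Om^{pq}$ with $d_H\om=0$: then $d\om=d_V\om$, and $d_H(d_V\om)=-d_V(d_H\om)=0$ shows $d_V\om$ is a $d_H$-cocycle, so its class in $H^{p+1,q}_H$ is $d^{pq}_{VH}[\om]_H$; hence $d^{pq}_1=d^{pq}_{VH}$, and the recursion at $r=1$ gives $E^{pq}_2=\ke d^{pq}_{VH}/\im d^{p-1,q}_{VH}=H^{pq}_{VH}$.

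The remaining, and only nonroutine, point is that $d^{pq}_r=0$ for all $r\ge2$, so that $E^{pq}_r=E^{pq}_2=H^{pq}_{VH}$ stabilizes from the second page on. Here the argument is that $d^{pq}_2:E^{pq}_2\to E^{p+2,q-1}_2$ and the next page $E^{pq}_3=\ke d^{pq}_2/\im d^{p-2,q+1}_2$ are, by construction, the third stage of the iterated $d_V,d_H$-cohomology of the variation bicomplex; since we have already recorded that all bicomplex differentials and cohomologies beyond $H^{pq}_{VH}$ (and $H^{pq}_{HV}$) are trivial, this stage produces nothing new, which forces $d^{pq}_2=0$ and $E^{pq}_3=E^{pq}_2$. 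A one-line induction then gives $d^{pq}_r=0$ and $E^{pq}_{r+1}=E^{pq}_r=E^{pq}_2=H^{pq}_{VH}$ for every $r\ge2$.

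For the limit statement I would note that in the regular case $\Om^q_H(\cU,\cV)=0$ for $q>m_\cU$, so the bigraded sum defining $\Om^r_p$ is finite and the filtration $\{\Om^r_p(\cU,\cV)\}$ is Cartan of Cartan dimension $m_\cU$; the limit proposition for Cartan filtrations then yields $\lim_{r\to\infty}E^{pq}_r=E^{pq}_\infty$, and combined with the stabilization at $r=2$ this gives $\lim_{r\to\infty}E^{pq}_r=E^{pq}_2=E^{pq}_\infty$. I expect the degeneration step to be the main obstacle: everything else is a direct translation through the bigrading, whereas $d^{pq}_2=0$ rests essentially on the structural triviality of the higher bicomplex cohomologies quoted above (which, in the smooth-function example, is a Poincar\'e-lemma phenomenon).
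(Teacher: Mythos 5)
Your proposal is correct and follows essentially the same route as the paper, whose proof is only a one-line appeal to ``general properties of spectral sequences and the above calculations for the variation bicomplex''; you have filled in exactly those details (the bigraded identification $\Om^{p+q}_p/\Om^{p+q}_{p+1}=\Om^{pq}$ giving $E_0,E_1,E_2$ with $d_0=d_H$, $d_1=d_{VH}$, and the Cartan-filtration proposition for the limit, using $\Om^q_H(\cU,\cV)=0$ for $q>m_\cU$). Note only that your degeneration step $d^{pq}_2=0$ rests on the paper's own unproved assertion that all further bicomplex differentials and cohomologies are trivial, which is precisely what the paper's proof also implicitly relies on, so no genuinely different argument is introduced.
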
 
\begin{proof}
The proof is based on the general properties of spectral sequences and the 
above calculations for the variation bicomplex. 
\end{proof}

Let $X=X_V+X_H=X^a\p_a+X^\mu D_\mu\in\fD=\fD_V+\fD_H$. Then, 
\begin{itemize} 
	\item 
		$i_X F=0$ for any $F\in\cV$; 
	\item 
		$i_X\rho^a=X^a=i_{X_V}\rho^a$ for any $a\in\ba$; 
	\item 
		$i_X\vte^\mu=X^\mu=i_{X_H}\vte^\mu$ for any $\mu\in\bm$; 
	\item 
		$L_X F=(\vk X)F$ for any $F\in\cV$; 
	\item 
		$L_X\rho^a=(\p_b X^a-\Ga^a_{\mu b}X^\mu)\cdot\rho^b
		+(\na_\mu X_V)^a\cdot\vte^\mu$ for any $a\in\ba$; 
	\item 
		$L_X\vte^\mu=\p_a X^\mu\cdot\rho^a+D_\nu X^\mu\cdot\vte^\nu$ 
		for any $\mu\in\bm$. 
\end{itemize}
Remind, $(\na_\mu X)^a=D_\mu X^a+\Ga^a_{\mu b}X^b=(\na_\mu X_V)^a$, 
see Definition \ref{D6}. 
Due to Propositions \ref{P10} and \ref{P17} these formulas allow to calculate 
$i_X\om$ and $L_X\om$ for any form $\om\in\Om^{pq}$. 

\begin{prop} 
Let $\om\in\Om^{pq}$, $p,q\in\bbZ$, $X\in\fD$, 
then 
\begin{equation*} 
	i_X\om\in\begin{cases}
		\Om^{p-1,q}, & X\in\fD_V, \\
		\Om^{p,q-1}, & X\in\fD_H, \\ 
			  \end{cases} 
		\quad 
	L_X\om\in\begin{cases} 
		\Om^{p+1,q-1}\oplus\Om^{pq}\oplus\Om^{p-1,q+1}, & X\in\fD, \\ 
		\Om^{pq}, & X\in\cE. 
			    \end{cases}
\end{equation*} 
\end{prop}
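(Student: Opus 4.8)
The plan is to reduce everything to the action of $i_X$ and $L_X$ on the generators $\cV\cup\{\rho^a\}\cup\{\vte^\mu\}$ of the bigraded $\cA$-module $\Om(\cU,\cV)$, using the explicit formulas for $i_X v,\,i_X\rho^a,\,i_X\vte^\mu$ and $L_X v,\,L_X\rho^a,\,L_X\vte^\mu$ displayed just above the statement. By Proposition \ref{P17} every $\om\in\Om^{pq}$ is an $\cA$-linear combination of monomials $v\cdot\rho^{a_1}\w\dots\w\rho^{a_p}\w\vte^{\mu_1}\w\dots\w\vte^{\mu_q}$ with $v\in\cV$, and by Proposition \ref{P10} the operator $i_X$ is a graded antidifferentiation of degree $-1$ and $L_X$ a differentiation of the $\Om(\cU,\cA)$-module $\Om(\cU,\cV)$. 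Hence applying $i_X$ or $L_X$ to such a monomial produces, via the (graded) Leibniz rule, a sum of terms in each of which exactly one factor has been acted upon while all the remaining factors keep their bidegree; it then suffices to track the possible bidegree shifts on a single generator.

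For the interior product: if $X=X^a\p_a\in\fD_V$ then $i_X v=0$, $i_X\rho^a=X^a\in\cA=\Om^{00}$ and $i_X\vte^\mu=i_{X_H}\vte^\mu=0$ because $X_H=0$, so on a monomial $i_X$ deletes exactly one $\rho$-factor (up to sign) and keeps all $\vte$-factors, whence $i_X\om\in\Om^{p-1,q}$. Dually, if $X=X^\mu D_\mu\in\fD_H=\cD(\cU)$ then $i_X v=0$, $i_X\rho^a=i_{X_V}\rho^a=0$, $i_X\vte^\mu=X^\mu$, so $i_X$ deletes one $\vte$-factor and $i_X\om\in\Om^{p,q-1}$. (For a general $X=X_V+X_H$ one gets $i_X\om=i_{X_V}\om+i_{X_H}\om\in\Om^{p-1,q}\oplus\Om^{p,q-1}$.)

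For the Lie derivative with arbitrary $X\in\fD$: from the displayed formulas $L_X v=(\vk X)v\in\cV=\Om^{00}$, while $L_X\rho^a$ has one component in $\Om^{10}$ (the $(\p_b X^a-\Ga^a_{\mu b}X^\mu)\cdot\rho^b$ term) and one in $\Om^{01}$ (the $(\na_\mu X_V)^a\cdot\vte^\mu$ term), and $L_X\vte^\mu$ has one component in $\Om^{10}$ (the $\p_a X^\mu\cdot\rho^a$ term) and one in $\Om^{01}$ (the $D_\nu X^\mu\cdot\vte^\nu$ term). In every case the bidegree of each component differs from that of the generator by an element of $\{(0,0),(+1,-1),(-1,+1)\}$; feeding this through the Leibniz rule on a monomial of bidegree $(p,q)$ keeps the total shift in that same set, so $L_X\om\in\Om^{p+1,q-1}\oplus\Om^{pq}\oplus\Om^{p-1,q+1}$. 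For $X\in\cE$ I would additionally use that, by the definition of $\cE=\cE(\cU)$ (cf. Propositions \ref{PE} and \ref{BE}), $X$ is a vertical differentiation with $[\cD,X]=0$, that is $X^\mu=0$ and $\na_\mu X_V=\na_\mu X=0$ for all $\mu$. Then the off-diagonal pieces $(\na_\mu X_V)^a\cdot\vte^\mu$ of $L_X\rho^a$ and $\p_a X^\mu\cdot\rho^a,\,D_\nu X^\mu\cdot\vte^\nu$ of $L_X\vte^\mu$ all vanish, so $L_X$ preserves bidegree on every generator and hence maps $\Om^{pq}$ into $\Om^{pq}$.

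There is no real obstacle here: the statement is a bookkeeping consequence of the generator formulas and the (anti)derivation property. The only points deserving a little care are treating $i_X$ as a degree $-1$ graded antiderivation, so that the signs produced by the Leibniz rule do not affect into which bidegree the output falls, and correctly identifying $\cE$ as the set of vertical differentiations annihilated by every $\ad_{D_\mu}$, so that the vanishing of $X^\mu$ and of $\na_\mu X_V$ may be invoked.
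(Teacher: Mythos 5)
Your argument is correct and is exactly the one the paper intends: the proposition is left as an immediate consequence of the displayed generator formulas for $i_X$ and $L_X$ on $\cV$, $\rho^a$, $\vte^\mu$, combined with Propositions \ref{P10} and \ref{P17}, which is precisely your reduction via the (anti)derivation property and bidegree bookkeeping. Your identification of $\cE$ as the vertical differentiations with $[D_\mu,X]=0$ (so $X^\mu=0$ and $\na_\mu X=0$), killing the off-diagonal terms, is also the intended reason the Lie derivative preserves bidegree in that case.
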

Remind, $\cE\subset\fD$, see Propositions \ref{PE} and \ref{BE}. 

\begin{prop} 
For any $X\in\cE$, $p,q\in\bbZ$, the endomorphisms 
\begin{equation*}
	[L_X]^{pq}\in\End_\bbF(H^{pq}_{V,H}), \quad 
	[\om^{pq}]_{V,H}\mapsto[L_X]^{pq}[\om^{pq}]_{V,H}=[L_X\om^{pq}]_{V,H},  
\end{equation*}
are defined, where 
$[\om^{pq}]_V=\om^{pq}+d^{p-1,q}_V\Om^{p-1,q}$, 
$[\om^{pq}]_H=\om^{pq}+d^{p,q-1}_H\Om^{p,q-1}$.
\end{prop}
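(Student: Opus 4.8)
The plan is to reduce the statement to the single fact that $L_X$ commutes with each of the partial differentials $d^{pq}_V$ and $d^{pq}_H$ when $X\in\cE$; once this is in hand the passage to the quotients $H^{pq}_V$ and $H^{pq}_H$ is routine homological descent. Two ingredients are already available. First, by the Proposition immediately above, $X\in\cE$ forces $L_X$ to be bidegree-preserving: $L_X\om\in\Om^{pq}$ for every $\om\in\Om^{pq}$, so $L_X$ restricts to an $\bbF$-linear endomorphism of each $\Om^{pq}$. Second, since $\cE\subset\fD$, the Cartan magic formula applies to $X$ and yields $L_X\com d=d\com L_X$ on $\Om(\cU,\cV)$.

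First I would split the relation $L_X\com d=d\com L_X$ along the bigrading. Restricting to $\Om^{pq}$ and writing $d=d_V+d_H$ with $d_V(\Om^{pq})\subset\Om^{p+1,q}$, $d_H(\Om^{pq})\subset\Om^{p,q+1}$, the identity becomes $L_Xd_V\om+L_Xd_H\om=d_VL_X\om+d_HL_X\om$ for all $\om\in\Om^{pq}$. By the bidegree-preservation of $L_X$, the terms $L_Xd_V\om$ and $d_VL_X\om$ lie in $\Om^{p+1,q}$ while $L_Xd_H\om$ and $d_HL_X\om$ lie in $\Om^{p,q+1}$; since $\Om(\cU,\cV)=\oplus_{p,q}\Om^{pq}$ is a direct sum, the $\Om^{p+1,q}$- and $\Om^{p,q+1}$-components may be equated separately, giving
\[
  L_X\com d^{pq}_V=d^{pq}_V\com L_X,\qquad L_X\com d^{pq}_H=d^{pq}_H\com L_X,\qquad p,q\in\bbZ .
\]

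Then I would conclude by standard descent, say in the vertical case. From $L_X\com d^{pq}_V=d^{pq}_V\com L_X$ we get $L_X(\ke d^{pq}_V)\subset\ke d^{pq}_V$, and from $L_X\com d^{p-1,q}_V=d^{p-1,q}_V\com L_X$ together with $L_X(\Om^{p-1,q})\subset\Om^{p-1,q}$ we get $L_X(\im d^{p-1,q}_V)\subset\im d^{p-1,q}_V$ (explicitly, $L_X(d_V\eta)=d_V(L_X\eta)$ with $L_X\eta\in\Om^{p-1,q}$). Hence $L_X$ descends to $H^{pq}_V=\ke d^{pq}_V\big/\im d^{p-1,q}_V$, defining $[L_X]^{pq}\colon[\om^{pq}]_V\mapsto[L_X\om^{pq}]_V$; independence of the representative is precisely the inclusion just noted, and $\bbF$-linearity is inherited from that of $L_X$, so $[L_X]^{pq}\in\End_\bbF(H^{pq}_V)$. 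The horizontal case is identical with $d_H$ in place of $d_V$ and $\Om^{p,q-1}$ in place of $\Om^{p-1,q}$. I do not anticipate a genuine obstacle; the one point demanding care is the bidegree bookkeeping in the splitting step, which works only because $X\in\cE$ makes $L_X$ bidegree-preserving: for a general $X\in\fD$ the operator $L_X$ has components of bidegrees $(1,-1),(0,0),(-1,1)$ (see the preceding Proposition) and the components of $L_X\com d=d\com L_X$ would mix, so the separate commutations with $d_V$ and $d_H$ would fail.
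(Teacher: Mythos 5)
Your proof is correct and uses exactly the ingredients the paper has set up for this purpose: bidegree preservation of $L_X$ for $X\in\cE$ (the immediately preceding Proposition) plus the Cartan magic formula $L_X\com d=d\com L_X$, split along the bigrading to get commutation with $d_V$ and $d_H$ separately, followed by routine descent to $H^{pq}_V$ and $H^{pq}_H$. The paper states this Proposition without an explicit proof, and your argument is precisely the intended one, including the correct observation that for general $X\in\fD$ the mixed bidegree components of $L_X$ would spoil the separate commutations.
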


\section{Differential algebras in partial differential equations} 

\subsection{Notation} 
Here: 
\begin{itemize} 
	\item 
		$\X=\bbR^\bm=\{x=(x^\mu)\mid x^\mu\in\bbR, \ \mu\in\bm\}$ 
		is the linear space of {\it independent} variables; 
	\item 
		$\U=\bbR^\A=\{u=(u^\al)\mid u^\al\in\bbR, \ \al\in\A\}$ 
		is the linear space of {\it dependent} variables, 
		$\A$ is a finite index set; 
	\item 
		$\bU=\bbR^\A_\I=\{\bu=(u^\al_i)\mid u^a_i\in\bbR, \ \al\in\A, \ i\in\I\}$ 
		is the linear space of {\it differential} variables, 
		$\I=\bbZ^\bm_+$ (note, $\dim\bU=\infty$); 
	\item 
		$\cA=\cC^\infty_{fin}(\X\bU)$ is the unital associative commutative algebra 
		of $\bbF$-valued smooth functions depending on a finite number 
		of the arguments $x^\mu,u^\al_i$, 
		$\X\bU=\X\times\bU$.
\end{itemize}
In this case, $\fM(\cA)=\cA$, because the algebra $\cA$ is unital. 
The Lie $\cA$-algebra $\fD=\fD(\cA)$ has the standard $\cA$-basis 
$\{\p_{u^\al_i},\p_{x^\mu}\mid\al\in\A, \ i\in\I, \ \mu\in\bm\}$, 
where $\p_{u^\al_i},\p_{x^\mu}$ are partial derivatives. 

In the algebraic approach to partial differential equations the Lie $\cA$-algebra 
$\fD$ splits as  $\fD=\fD_V\oplus_\cA\fD_H$, where 
\begin{itemize}
	\item 
		the vertical subalgebra $\fD_V$ has the $\cA$-basis 
		$\p=\{\p_{u^\al_i}\mid \al\in\A, \ i\in\I\}$, $[\p_{u^\al_i},\p_{u^\be_j}]=0$; 
	\item 
		the horizontal subalgebra $\fD_H$ has the $\cA$-basis 
		$D=\{D_\mu\mid \mu\in\bm\}$, \ 
		$D_\mu=\p_{x^\mu}+u^\al_{i+(\mu)}\p_{u^\al_i}$, \ 
		$i+(\mu)=(i^1,\dots,i^\mu+1,\dots,i^m)$, \ $[D_\mu,D_\nu]$=0.
\end{itemize}
The horizontal basic differentiations $D_\mu$ are called {\it total derivatives}, 
they are characterized by the {\it chain rule}: 
\begin{equation*} 
	\p_{x^\mu}\big(f(x,\bu)\big|_{\bu=\bphi(x)}\big)
	=\big(D_\mu f(x,\bu)\big)\big|_{\bu=\phi(x)}
\end{equation*}
for all $\mu\in\bm$, $f\in\cA$ and $\phi\in\cC^\infty(\X;\U)$, 
where $\phi(x)=(\phi^\al(x))$, $\bphi(x)=(\phi^\al_i(x))$, 
$\phi^\al_i(x)=\p_{x^i}\phi^\al(x)$, 
$\p_{x^i}=(\p_{x^1})^{i^1}\dots(\p_{x^m})^{i^m}$, 
$i=(i^1,\dots,i^m)\in\I$. 
\begin{itemize} 
	\item 
		The commutators 
		$[D_\mu,\p_{u^\al_i}]=-\p_{u^\al_{i-(\mu)}}$, 
		hence the connection $\Ga=(\Ga^{i\be}_{\mu\al j})$, 
		$\Ga^{i\be}_{\mu\al j}=-\de^\be_\al\de^i_{j+(\mu)}$, 
		$\mu\in\bm$, $\al,\be\in\A$, $i,j\in\I$. 
\end{itemize} 
Thus, the regular unital differential algebra $(\cA,\fD_H)$ is defined.  

Here, the commutator $[D_\mu,X]=(\na_\mu\zeta)^\al_i\p_{u^\al_i}$ 
for any $X=\zeta^\al_i\p_{u^\al_i}\in\fD_V$, where 
\begin{equation*}
	\na_\mu\in\End_\bbF(\cA^\A_\I), \quad  
	\zeta=(\zeta^\al_i)\mapsto\na_\mu\zeta=((\na_\mu\zeta)^\al_i), \quad 
	(\na_\mu\zeta)^\al_i=D_\mu\zeta^\al_i-\zeta^\al_{i+(\mu)}. 
\end{equation*}

\begin{prop}\label{PC} 
The commutator $[\na_\mu,\na_\nu]=0$ for all $\mu,\nu\in\bm$, i.e., 
the curvature $F=(F_{\mu\nu})=0$.
\end{prop}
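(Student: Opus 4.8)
The plan is to compute the commutator $[\na_\mu,\na_\nu]$ directly from the explicit formula $(\na_\mu\zeta)^\al_i=D_\mu\zeta^\al_i-\zeta^\al_{i+(\mu)}$ recorded just above the statement, and to observe that after antisymmetrizing in $\mu,\nu$ everything cancels except a term involving $[D_\mu,D_\nu]$, which vanishes since $(\cA,\fD_H)$ is regular.

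First I would iterate the formula. Put $\eta=\na_\nu\zeta$, so that $\eta^\al_i=D_\nu\zeta^\al_i-\zeta^\al_{i+(\nu)}$. Then, using the $\bbF$-linearity of $D_\mu$ and the fact that $\eta^\al_{i+(\mu)}$ is obtained from $\eta^\al_i$ by the index substitution $i\mapsto i+(\mu)$, one gets
\begin{equation*}
	(\na_\mu\na_\nu\zeta)^\al_i=D_\mu D_\nu\zeta^\al_i-D_\mu\zeta^\al_{i+(\nu)}-D_\nu\zeta^\al_{i+(\mu)}+\zeta^\al_{i+(\mu)+(\nu)}.
\end{equation*}
The two middle terms and the last term are manifestly symmetric under interchange of $\mu$ and $\nu$, the last one because addition of multi-indices is commutative, $(\mu)+(\nu)=(\nu)+(\mu)$. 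Hence, subtracting the same expression with $\mu$ and $\nu$ interchanged,
\begin{equation*}
	([\na_\mu,\na_\nu]\zeta)^\al_i=(D_\mu D_\nu-D_\nu D_\mu)\zeta^\al_i=[D_\mu,D_\nu]\zeta^\al_i=0,
\end{equation*}
where the last equality is the regularity relation $[D_\mu,D_\nu]=0$ for the horizontal $\cA$-basis $D=\{D_\mu\mid\mu\in\bm\}$. Therefore $[\na_\mu,\na_\nu]=0$ as an endomorphism of $\cA^\A_\I$.

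By the commutator formula for covariant derivatives in a regular differential algebra (the Proposition asserting $[\na_\mu,\na_\nu]=F_{\mu\nu}$), the vanishing just obtained says precisely that the curvature $F=(F_{\mu\nu})$ is zero. Alternatively one can verify $F_{\mu\nu}=0$ from $F_{\mu\nu}=D_\mu\Ga_\nu-D_\nu\Ga_\mu+[\Ga_\mu,\Ga_\nu]$ by inserting the connection components $\Ga^{i\be}_{\mu\al j}=-\de^\be_\al\de^i_{j+(\mu)}$: the first two terms vanish because these components are constants annihilated by $D_\mu$, and $[\Ga_\mu,\Ga_\nu]=0$ because the matrix product $\Ga_\mu\Ga_\nu$ has components proportional to $\de^\be_\al\de^i_{j+(\mu)+(\nu)}$, again symmetric in $\mu,\nu$; this also exhibits $(\cA,\fD_H)$ as an instance of the flat Example above, with $h^{(\al,i)}_\mu=u^\al_{i+(\mu)}$. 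I do not expect any genuine obstacle here: the argument is routine bookkeeping, and the only point requiring care is tracking the index shifts $i\mapsto i\pm(\mu)$ consistently and invoking commutativity of multi-index addition at the right moment.
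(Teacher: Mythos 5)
Your proof is correct. The paper states Proposition \ref{PC} without any proof, treating it as a routine consequence of the explicit formula $(\na_\mu\zeta)^\al_i=D_\mu\zeta^\al_i-\zeta^\al_{i+(\mu)}$ and $[D_\mu,D_\nu]=0$; your direct computation (and the alternative check via the constant connection components in $F_{\mu\nu}=D_\mu\Ga_\nu-D_\nu\Ga_\mu+[\Ga_\mu,\Ga_\nu]$, matching the flat Example) is exactly that verification.
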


\begin{defi} 
The number $n\in\bbZ_+$ is called the {\it order} of a differential function 
$f\in\cA$, if $\p_{u^\al_i}f\ne0$ for some $\al\in\A$ and $i\in\I$, 
$|i|=i^1+\dots i^m=n$, while $\p_{u^\be_j}f=0$ 
for all $\be\in\A$ and $j\in\I$, $|j|=j^1+\dots j^m>n$.
\end{defi} 

\begin{lemma}\label{L1} 
The subalgebra $\cA_\cD=\cA^{(0)}_H=\bbF$. 
\end{lemma}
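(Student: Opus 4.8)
The plan is to isolate the actual content of the statement and then attack it by an order argument. The middle equality $\cA^{(0)}_H=\cA_\cD$ is nothing but the definition of the filtration $\{\cA^{(q)}_H\}$, and the inclusion $\bbF\subseteq\cA_\cD$ is automatic since each total derivative $D_\mu$ is an $\bbF$-linear differentiation and hence annihilates constants. So everything reduces to proving $\cA_\cD\subseteq\bbF$: if $f\in\cA$ satisfies $D_\mu f=0$ for all $\mu\in\bm$, then $f$ is constant.

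First I would split off the trivial case where $f$ depends on none of the differential variables $u^\al_i$. Then $u^\al_{i+(\mu)}\p_{u^\al_i}f=0$, so $D_\mu f=\p_{x^\mu}f$; hence $\p_{x^\mu}f=0$ for all $\mu\in\bm$, and $f\in\bbF$.

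The real work is to rule out the other case. I would assume, for a contradiction, that $f$ depends on some $u^\al_i$; then $f$ has a well-defined order $n\in\bbZ_+$, so there are $\al\in\A$ and a multi-index $i$ with $|i|=n$, $\p_{u^\al_i}f\ne0$, and $\p_{u^\be_j}f=0$ whenever $|j|>n$. Fix such $\al,i$, choose any $\mu\in\bm$ (possible since $m\in\bbN$), and apply $\p_{u^\al_{i+(\mu)}}$ to the identity $D_\mu f=\p_{x^\mu}f+u^\be_{k+(\mu)}\p_{u^\be_k}f=0$ (a finite sum, as $f$ involves finitely many variables). In the resulting expression the term coming from $\p_{x^\mu}f$ vanishes because $\p_{u^\al_{i+(\mu)}}$ commutes with $\p_{x^\mu}$ and $|i+(\mu)|=n+1>n$; in the product rule applied to $u^\be_{k+(\mu)}\p_{u^\be_k}f$ the term differentiating $\p_{u^\be_k}f$ vanishes for the same reason (it equals $\p_{u^\be_k}\p_{u^\al_{i+(\mu)}}f=0$), while the term differentiating the coefficient contributes $\de^\al_\be\de^{i+(\mu)}_{k+(\mu)}\p_{u^\be_k}f$, which collapses to $\p_{u^\al_i}f$ since $k+(\mu)=i+(\mu)$ forces $k=i$. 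Thus $0=\p_{u^\al_{i+(\mu)}}(D_\mu f)=\p_{u^\al_i}f$, contradicting the choice of $\al,i$. Hence $f$ depends on no $u^\al_i$, and by the previous paragraph $f\in\bbF$.

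The only place demanding care is this last computation: one must check that after applying $\p_{u^\al_{i+(\mu)}}$ every contribution except the single term $\p_{u^\al_i}f$ really drops out, which rests on the order hypothesis $\p_{u^\be_j}f=0$ for $|j|>n$ and on the elementary fact that the shift $k\mapsto k+(\mu)$ on multi-indices is injective. Everything else is bookkeeping.
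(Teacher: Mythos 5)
Your argument is correct and takes essentially the same route as the paper: the paper's one-line proof appeals exactly to the fact that every $f\in\cA$ has finite order, which is the key you use when you apply $\p_{u^\al_{i+(\mu)}}$ to $D_\mu f=0$ and extract the top-order coefficient $\p_{u^\al_i}f$ to get a contradiction. The bookkeeping (vanishing of the $\p_{x^\mu}$ and second-derivative terms, injectivity of $k\mapsto k+(\mu)$, and the trivial case where $f$ depends only on $x$) is all handled correctly.
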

\begin{proof}
The proof is based on the property that by definition every differential function 
$f\in\cA$ has a finite order $n=n(f)$.  
\end{proof} 

\begin{prop} 
In the filtration $\{\cA^{(q)}_H\mid q\in\bbZ_+\}$ the linear spaces 
\begin{equation*}
	\cA^{(q)}_H=\bbF_q[x]=\bigg\{f(x)=\sum_{|i|\le q}f_ix^i \ \bigg| \ f_i\in\bbF\bigg\} 
\end{equation*}
are the spaces of polynomials of the order $q$ in $x\in\X$, where 
$x^i=(x^1)^{i^1}\!\dots\!(x^m)^{i^m}$, $i\in\I$, $|i|=i^1+\dots+i^m$.
\end{prop}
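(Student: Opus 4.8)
The plan is to prove the identity $\cA^{(q)}_H=\bbF_q[x]$ by induction on $q\in\bbZ_+$. The base case $q=0$ is precisely Lemma \ref{L1}, since then $\cA^{(0)}_H=\cA_\cD=\bbF=\bbF_0[x]$. For the inductive step I assume $\cA^{(q-1)}_H=\bbF_{q-1}[x]$ and establish both inclusions for $\cA^{(q)}_H$.

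The inclusion $\bbF_q[x]\subseteq\cA^{(q)}_H$ is immediate: if $f=f(x)$ is a polynomial of degree $\le q$, then $D_\mu f=\p_{x^\mu}f$ (the term $u^\al_{i+(\mu)}\p_{u^\al_i}f$ vanishes on $f$), which is a polynomial of degree $\le q-1$, hence lies in $\cA^{(q-1)}_H$ by the inductive hypothesis; so $f\in\cA^{(q)}_H$ by the definition of the filtration.

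The substantive direction is $\cA^{(q)}_H\subseteq\bbF_q[x]$, and I would split it in two. First, I show any $f\in\cA^{(q)}_H$ is independent of all differential variables $u^\al_i$. Suppose not; then $f$ has a well-defined finite order $n=n(f)\ge0$. Writing $D_\mu f=\p_{x^\mu}f+u^\al_{i+(\mu)}\p_{u^\al_i}f$ and noting that $\p_{x^\mu}f$ and every $\p_{u^\al_i}f$ have order $\le n$, the only part of $D_\mu f$ involving an order-$(n{+}1)$ variable is $\sum_{\al\in\A,\,|i|=n}u^\al_{i+(\mu)}\p_{u^\al_i}f$. But $D_\mu f\in\cA^{(q-1)}_H=\bbF_{q-1}[x]$ depends on $x$ alone, so that sum must vanish; applying $\p_{u^\be_{j+(\mu)}}$ to it for a fixed $|j|=n$ picks out exactly the single term with $\al=\be$, $i=j$ — because the variables $u^\al_{i+(\mu)}$ with $|i|=n$ are pairwise distinct and none of them occurs in any $\p_{u^\al_i}f$ — and yields $\p_{u^\be_j}f=0$ for all $\be\in\A$ and all $|j|=n$, contradicting $n=n(f)$. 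Hence $f=f(x)$.

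With $f=f(x)$ in hand, $D_\mu f=\p_{x^\mu}f\in\cA^{(q-1)}_H=\bbF_{q-1}[x]$, so every first partial $\p_{x^\mu}f$ is a polynomial of degree $\le q-1$. It remains to conclude that $f$ itself is a polynomial of degree $\le q$: letting $P$ be the degree-$q$ Taylor polynomial of $f$ at the origin, a short computation shows $\p_{x^\mu}P$ is the degree-$(q{-}1)$ Taylor polynomial of the polynomial $\p_{x^\mu}f$, hence equals it, so $f-P$ has vanishing gradient on the connected space $\X=\bbR^m$ and is therefore constant; thus $f\in\bbF_q[x]$. The main obstacle is the order-counting argument of the preceding paragraph — making precise that the order-$(n{+}1)$ component of $D_\mu f$ is exactly the displayed sum and that it cannot vanish through internal cancellation among the distinct fresh variables $u^\al_{i+(\mu)}$; the rest is routine.
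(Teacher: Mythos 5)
Your proof is correct. The paper states this proposition without any proof (it only hints, in the one-line proof of Lemma~\ref{L1}, at exploiting the finite order $n(f)$ of a differential function), and your induction on $q$ — first using the fresh order-$(n{+}1)$ variables $u^\al_{i+(\mu)}$ in $D_\mu f$ to force $\p_{u^\be_j}f=0$ at top order and hence $f=f(x)$, then the Taylor-polynomial step to get $f\in\bbF_q[x]$ from $\p_{x^\mu}f\in\bbF_{q-1}[x]$ — is precisely the natural argument the paper leaves to the reader, with the delicate point (no cancellation among the distinct weight-$(n{+}1)$ variables, which do not occur in any $\p_{u^\al_i}f$) handled correctly.
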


The limit $\lim_{q\to\infty}\cA^{(q)}_H$ depends on the topology. 
Thus, if we choose the natural topology of the linear space of polynomials $\bbF[x]$
then we get $\lim_{q\to\infty}\cA^{(q)}_H=\bbF[x]$,
while if we choose the natural topology of the linear linear space of smooth functions 
$\cC^\infty(\X;\bbF)$ then we get $\lim_{q\to\infty}\cA^{(q)}_H=\cC^\infty(\X;\bbF)$. 

\begin{lemma}\label{L2} 
The equalities $(\na_r\zeta)^\al_i=\sum_{k+j=r}(-1)^k\binom{r}{k}D_j\zeta^\al_{i+k}$ 
hold, where $\al\in\I$,  $i,r,k,j\in\I$, $\na_r=(\na_1)^{r^1}\dots(\na_m)^{r^m}$, 
$D_j=(D_1)^{j^1}\dots(D_m)^{j^m}$.
\end{lemma}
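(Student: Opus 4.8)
The plan is to reduce the multi-index identity to a single binomial expansion performed inside a \emph{commutative} algebra of operators on $\cA^\A_\I$. For each $\mu\in\bm$ introduce the shift operator $S_\mu\in\End_\bbF(\cA^\A_\I)$ defined by $(S_\mu\zeta)^\al_i=\zeta^\al_{i+(\mu)}$. The Notation subsection records $(\na_\mu\zeta)^\al_i=D_\mu\zeta^\al_i-\zeta^\al_{i+(\mu)}$, that is, $\na_\mu=D_\mu-S_\mu$ as an $\bbF$-linear endomorphism of $\cA^\A_\I$. So the whole statement is an instance of the binomial theorem once we know the relevant operators commute.

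The one point that needs checking is that $D_1,\dots,D_m,S_1,\dots,S_m$ pairwise commute. First, $[D_\mu,D_\nu]=0$ by regularity of $(\cA,\fD_H)$. Second, $S_\mu S_\nu=S_\nu S_\mu$ since $i+(\mu)+(\nu)=i+(\nu)+(\mu)$. Third, $D_\mu S_\nu=S_\nu D_\mu$: indeed $(D_\mu S_\nu\zeta)^\al_i=D_\mu\big(\zeta^\al_{i+(\nu)}\big)=(D_\mu\zeta)^\al_{i+(\nu)}=(S_\nu D_\mu\zeta)^\al_i$, because $D_\mu$ acts on the coefficient functions component-by-component in the differential index $i$ and the shift merely relabels that index. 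Hence $\na_1,\dots,\na_m$ lie in a commutative subalgebra of $\End_\bbF(\cA^\A_\I)$.

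Next I apply the one-variable binomial theorem to each factor $(\na_\mu)^{r^\mu}=(D_\mu-S_\mu)^{r^\mu}$ and multiply over $\mu$; since everything commutes, the product collapses to
\[
\na_r=\prod_{\mu\in\bm}(D_\mu-S_\mu)^{r^\mu}=\sum_{k+j=r}(-1)^{|k|}\binom{r}{k}\,S_k\,D_j,
\]
where $S_k=(S_1)^{k^1}\cdots(S_m)^{k^m}$, $D_j=(D_1)^{j^1}\cdots(D_m)^{j^m}$, $\binom{r}{k}=\prod_\mu\binom{r^\mu}{k^\mu}$, and $(-1)^k$ in the statement is read as $(-1)^{|k|}$. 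Evaluating on $\zeta$ at the index $i$ and using $(S_k\eta)^\al_i=\eta^\al_{i+k}$ gives precisely $(\na_r\zeta)^\al_i=\sum_{k+j=r}(-1)^k\binom{r}{k}D_j\zeta^\al_{i+k}$.

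If one prefers to avoid the operator language, the same identity follows by induction on $|r|$: the case $|r|=1$ is the definition of $\na_\mu$, and the inductive step uses $\na_{r+(\mu)}=\na_\mu\circ\na_r$ together with the multi-index Pascal relation $\binom{r+(\mu)}{k}=\binom{r}{k}+\binom{r}{k-(\mu)}$ and, again, the commutation of $D_\mu$ with the index shift. Either way the single genuine ingredient is the commutativity $D_\mu S_\nu=S_\nu D_\mu$; everything else is multi-index bookkeeping, so I do not expect a real obstacle.
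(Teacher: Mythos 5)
Your argument is correct, and its main route differs from the paper's. The paper proves Lemma \ref{L2} by induction on $r$, using Proposition \ref{PC} (the commutativity $[\na_\mu,\na_\nu]=0$) together with the Pascal relation $\binom{r}{k}+\binom{r}{k-(\mu)}=\binom{r+(\mu)}{k}$ to push the combinatorics through by hand. You instead factor $\na_\mu=D_\mu-S_\mu$ with index-shift operators $S_\mu$, check that $D_1,\dots,D_m,S_1,\dots,S_m$ generate a commutative subalgebra of $\End_\bbF(\cA^\A_\I)$ (your componentwise-action justification of $D_\mu S_\nu=S_\nu D_\mu$ is sound, since $D_\mu$ is applied to each component $\zeta^\al_i$ separately while $S_\nu$ only relabels the index), and then invoke the multi-variable binomial theorem, reading $(-1)^k$ as $(-1)^{|k|}$ exactly as the paper's remark prescribes. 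The Pascal bookkeeping is thereby absorbed into the binomial theorem, and the commutativity of Proposition \ref{PC} emerges as a byproduct of your shift-operator calculus rather than serving as an input. What each approach buys: the paper's induction stays entirely at the level of the components $\zeta^\al_i$ and needs no auxiliary operators, while your formulation is shorter, makes the coefficient $(-1)^{|k|}\binom{r}{k}$ and the placement of the shift $\zeta^\al_{i+k}$ transparent, and your closing induction sketch coincides with the paper's own proof, so no gap remains on either route.
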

\begin{proof} 
The proof is based on Proposition \ref{PC}, the induction on $r$ and the well 
known equality $\binom{r}{k}+\binom{r}{k-(\mu)}=\binom{r+(\mu)}{k}$. 
\end{proof} 
\begin{rem} 
We use the standard multiindex notation, 
in particular, $(-1)^r=(-1)^{|r|}=(-1)^{r^1+\dots+r^m}$, 
$\binom{r}{k}=\binom{r^1}{k^1}\dots\binom{r^m}{k^m}$.
\end{rem} 

\begin{defi} 
For every $k\in\I$ we define the linear subspace 
\begin{equation*} 
	\Phi^k=\big\{\eps^k_\phi=(\eps^{k\al}_{\phi i})\in\cA^\A_\I \ \big| \ 
	\eps^{k\al}_{\phi i}=\tbinom{i}{k}D_{i-k}\phi^\al, \ \phi=(\phi^\al)\in\cA^\A\big\}
	\subset\cA^\A_\I.
\end{equation*}
We also set $\Phi^k=0$ if $k\notin\I$.
\end{defi}

\begin{lemma}\label{L3} 
For any $r,k\in\I$ the mapping 
\begin{equation*} 
	\na_r\in\Hom_\bbF(\Phi^k;\Phi^{k-r}), \quad 
	\eps^k_\phi\mapsto\na_r\eps^k_\phi=(-1)^r\eps^{k-r}_\phi.
\end{equation*}
In particular, $\na_r\eps^k_\phi=0$ for any $k-r\notin\I$ and $\phi\in\cA^\A$.
\end{lemma}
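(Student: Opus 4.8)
The plan is to apply Lemma~\ref{L2} to $\zeta=\eps^k_\phi$ and then reduce the resulting scalar coefficient to a binomial identity. Fix $\al\in\A$ and $i\in\I$. Since $D_j(\eps^k_\phi)^\al_{i+l}=\binom{i+l}{k}D_jD_{i+l-k}\phi^\al$ and $D_jD_{i+l-k}\phi^\al=D_{i+r-k}\phi^\al$ whenever $l+j=r$ (the terms with $i+l-k\notin\I$ carry the factor $\binom{i+l}{k}=0$, so no meaning need be assigned to $D_{i+l-k}$ there), Lemma~\ref{L2} gives
\[
(\na_r\eps^k_\phi)^\al_i=\bigg(\sum_{0\le l\le r}(-1)^l\binom{r}{l}\binom{i+l}{k}\bigg)D_{i+r-k}\phi^\al .
\]
Comparing this with $\eps^{(k-r)\al}_{\phi i}=\binom{i}{k-r}D_{i-(k-r)}\phi^\al=\binom{i}{k-r}D_{i+r-k}\phi^\al$, the lemma reduces to the identity
\[
\sum_{0\le l\le r}(-1)^l\binom{r}{l}\binom{i+l}{k}=(-1)^r\binom{i}{k-r},
\]
where we use the convention $\binom{n}{s}=0$ for $s<0$, so that the right-hand side vanishes precisely when $k-r\notin\I$, i.e. $\Phi^{k-r}=0$; this gives the ``in particular'' clause.

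To prove this identity I would note that $\binom{\cdot}{\cdot}$ and $(-1)^{|\cdot|}$ factor over the $m$ coordinates and that the summation range $\{0\le l\le r\}$ is a product of one-dimensional ranges, so it suffices to establish the scalar identity $\sum_{l=0}^{c}(-1)^l\binom{c}{l}\binom{a+l}{b}=(-1)^c\binom{a}{b-c}$ for nonnegative integers $a,b,c$. This is a short induction on $c$: the case $c=0$ is trivial, and the step uses Pascal's rule $\binom{c+1}{l}=\binom{c}{l}+\binom{c}{l-1}$ to split the sum, a shift $l\mapsto l+1$ in the second piece, the inductive hypothesis (applied with $a$ and with $a+1$), and finally $\binom{a+1}{b-c}-\binom{a}{b-c}=\binom{a}{b-c-1}$.

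A self-contained alternative, which I would at least indicate, bypasses Lemma~\ref{L2}: prove the statement by induction on $|r|$. The base case $r=(\mu)$ follows directly from $(\na_\mu\zeta)^\al_i=D_\mu\zeta^\al_i-\zeta^\al_{i+(\mu)}$, using that $\binom{i}{k}\in\bbZ$ is a constant together with the one-variable Pascal identity $\binom{i^\mu+1}{k^\mu}-\binom{i^\mu}{k^\mu}=\binom{i^\mu}{k^\mu-1}$; this yields $\na_\mu\eps^k_\phi=-\eps^{k-(\mu)}_\phi=(-1)^{(\mu)}\eps^{k-(\mu)}_\phi$. The inductive step is immediate from $\na_{r+(\mu)}=\na_\mu\circ\na_r$ (the $\na_\mu$ commute by Proposition~\ref{PC}) and $\bbF$-linearity of $\na_\mu$.

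The only genuine obstacle here is boundary bookkeeping: one must keep checking that whenever an index such as $i+l-k$, $k-r$, or $k-(\mu)$ leaves $\bbZ^\bm_+$, the accompanying binomial coefficient vanishes, so that the formal manipulations stay consistent with the standing conventions $\Phi^{k-r}=0$ for $k-r\notin\I$ and $\binom{n}{s}=0$ for $s<0$. Everything else is the routine verification sketched above.
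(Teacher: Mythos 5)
Your proposal is correct, but your primary route is not the one the paper takes. The paper proves the lemma exactly as in your ``self-contained alternative'': it computes directly from the definition that
$(\na_\mu\eps^k_\phi)^\al_i=\big(\tbinom{i}{k}-\tbinom{i+(\mu)}{k}\big)D_{i+(\mu)-k}\phi^\al=-\tbinom{i}{k-(\mu)}D_{i+(\mu)-k}\phi^\al$, i.e.\ $\na_\mu\eps^k_\phi=-\eps^{k-(\mu)}_\phi$, and then finishes by induction on $r$ (the step $\na_{r+(\mu)}=\na_\mu\com\na_r$ being legitimate by Proposition~\ref{PC}); so that sketch of yours reproduces the paper's argument. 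Your main route instead pushes $\eps^k_\phi$ through the closed formula of Lemma~\ref{L2} and reduces the claim to the alternating identity $\sum_{0\le l\le r}(-1)^l\binom{r}{l}\binom{i+l}{k}=(-1)^r\binom{i}{k-r}$, proved coordinatewise by induction with Pascal's rule. This is valid: your handling of the degenerate indices (terms with $i+l-k\notin\I$ killed by the vanishing binomial coefficient, and the convention $\binom{n}{s}=0$ for $s<0$ matching $\Phi^{k-r}=0$) is the same boundary bookkeeping the paper itself leaves implicit. What the paper's route buys is economy --- a one-line computation plus a trivial induction, with Lemma~\ref{L2} not needed at all; what your route buys is a single explicit binomial identity that simultaneously certifies the general formula and the ``in particular'' vanishing clause, at the cost of invoking Lemma~\ref{L2} and a second induction. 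Either argument is acceptable; if you keep the Lemma~\ref{L2} route as primary, state the scalar identity and its induction explicitly rather than as a sketch, since it is the entire content of the proof.
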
 
\begin{proof} 
Indeed, 
\begin{align*} 
	(\na_\mu\eps^k_\phi)^\al_i
		&=D_\mu\eps^{k\al}_{\phi i}-\eps^{k\al}_{\phi,i+(\mu)}
	=\tbinom{i}{k}D_{i+(\mu)-k}\phi^\al-\tbinom{i+(\mu)}{k}D_{i+(\mu)-k}\phi^\al \\
	       &=\big(\tbinom{i}{k}-\tbinom{i+(\mu)}{k}\big)D_{i+(\mu-k)}\phi^\al 
	=-\tbinom{i}{k-(\mu)}D_{i-k+(\mu)\phi}=-\eps^{k-(\mu),\al}_{\phi i}.
\end{align*}
To complete the proof one should use induction on $r$.
\end{proof} 

\begin{theorem} 
For any $\zeta=(\zeta^\al_i)\in\cA^\A_\I$ there exists the unique representation 
\begin{equation*} 
	\zeta=\sum_{k\in\I}\eps^k_{\phi_k}, \quad \phi_k=(\phi^\al_k)\in\cA^\A, 
	\quad \phi^\al_k=\sum_{i+j=k}(-1)^j\tbinom{k}{i}D_j\zeta^\al_i.
\end{equation*} 
In other words, the linear space $\cA^\A_\I$ is $\I$-graded by the linear 
spaces $\Phi^k$, i.e., $\cA^\A_\I=\oplus_{k\in\I}\Phi^k$.
\end{theorem}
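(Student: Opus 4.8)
The plan is to pin down the coefficients $\phi_k$ explicitly (they are forced, and the formula in the statement is the answer) and then establish existence and uniqueness in turn. First I would observe that the infinite sum $\sum_{k\in\I}\eps^k_{\phi_k}$ is meaningful because it is computed componentwise by a \emph{finite} sum: for fixed $i\in\I$ the component $(\eps^k_\phi)^\al_i=\tbinom{i}{k}D_{i-k}\phi^\al$ vanishes unless $k\le i$ componentwise, and the set $\{k\in\I\mid k\le i\}$ is finite. Taking $i=k$ shows in addition that $\phi\mapsto\eps^k_\phi$ is injective, so $\Phi^k\cong\cA^\A$. Finally, by Lemma \ref{L2} evaluated at second index $0$, the coefficient appearing in the statement can be rewritten as $\phi^\al_k=(-1)^k(\na_k\zeta)^\al_0$, a form better suited to the uniqueness argument.

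For uniqueness, suppose $\zeta=\sum_{l\in\I}\eps^l_{\phi_l}$ for some family $\phi_l\in\cA^\A$. Applying $\na_k$ termwise (legitimate by the local finiteness just noted) and using Lemma \ref{L3}, one gets $\na_k\zeta=(-1)^k\sum_{l\ge k}\eps^{l-k}_{\phi_l}$, the terms with $l\not\ge k$ dropping out. Evaluating the component at the index $0$, the factor $\tbinom{0}{l-k}$ kills every term with $l\ne k$, leaving $(\na_k\zeta)^\al_0=(-1)^k\phi^\al_k$. Hence $\phi^\al_k=(-1)^k(\na_k\zeta)^\al_0$, which is exactly the coefficient in the statement; so the representation, if it exists, is unique.

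For existence, define $\phi_k\in\cA^\A$ by the stated formula and compute, for fixed $\al$ and $i$,
\begin{equation*}
	\sum_{k\in\I}(\eps^k_{\phi_k})^\al_i
	=\sum_{k\le i}\tbinom{i}{k}D_{i-k}\phi^\al_k
	=\sum_{k\le i}\ \sum_{p+j=k}(-1)^j\tbinom{i}{k}\tbinom{k}{p}\,D_{i-p}\zeta^\al_p,
\end{equation*}
where $i-k+j=i-p$ was used. Interchanging the two (finite) summations, the coefficient of $D_{i-p}\zeta^\al_p$ becomes $\sum_{p\le k\le i}(-1)^{k-p}\tbinom{i}{k}\tbinom{k}{p}$. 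The one nonroutine point is the elementary identity that this sum vanishes unless $p=i$, in which case it equals $1$: writing $\tbinom{i}{k}\tbinom{k}{p}=\tbinom{i}{p}\tbinom{i-p}{k-p}$ and applying the binomial theorem coordinatewise turns it into $\tbinom{i}{p}(1-1)^{i-p}$, which has exactly that value. Thus the double sum collapses to $\zeta^\al_i$, which proves $\zeta=\sum_k\eps^k_{\phi_k}$; together with uniqueness and the injectivity noted above, this is precisely the grading assertion $\cA^\A_\I=\oplus_{k\in\I}\Phi^k$. The only real care needed throughout is the multiindex bookkeeping — the conventions $\tbinom{i}{k}=0$ and $D_{i-k}\phi=0$ when $k\not\le i$, and $(-1)^k=(-1)^{|k|}$ — together with checking that every sum encountered is finite; no analytic property of $\cC^\infty_{fin}(\X\bU)$ beyond the total-derivative calculus already set up is needed.
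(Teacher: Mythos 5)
Your proof is correct and follows essentially the same route as the paper: you force the coefficients $\phi^\al_k=(-1)^k(\na_k\zeta)^\al_0$ by applying $\na_k$ (Lemmas \ref{L2} and \ref{L3}) and evaluating the component at the index $0$, exactly as in the paper's proof. The only difference is that you spell out the ``easy test'' the paper leaves implicit, verifying existence via the collapse $\sum_{p\le k\le i}(-1)^{k-p}\tbinom{i}{k}\tbinom{k}{p}=\tbinom{i}{p}(1-1)^{i-p}$, together with the useful observation that all sums are componentwise finite.
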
 
\begin{proof} 
Indeed, for a given $\zeta\in\cA^\A_\I$ we need to find functions 
$\phi_k\in\cA^\A$, $k\in\I$, satisfying the equality $\zeta=\sum_{k\in\I}\eps^k_{\phi_k}$. 
Applying the operator $\na_r$, $r\in\I$, to both sides of this equality we get 
(see Lemmas \ref{L2} and \ref{L3}) 
\begin{align*} 
	(\na_r\zeta)^\al_i
	&=\sum_{k+j=r}(-1)^k\tbinom{r}{k}D_j\zeta^\al_{i+k}
	=\sum_{k\in\I}(\na_r\eps^k_{\phi_k})^\al_i
	=(-1)^r\sum_{k\in\I}(\eps^{k-r}_{\phi_k})^\al_i \\
	&=(-1)^r\sum_{j\in\I}(\eps^j_{\phi_{j+r}})
	=(-1)^r\sum_{j\in\I}\tbinom{i}{j}D_{i-j}\phi^\al_{i+r}. 
\end{align*}
Thus, we need to satisfy the equality 
\begin{equation*} 
	\sum_{j\in\I}\tbinom{i}{j}D_{i-j}\phi^\al_{i+r}
	=\sum_{k+j=r}(-1)^{k+r}\tbinom{r}{k}D_j\zeta^\al_{i+k}, 
	\quad \al\in\A, \ i\in\I.
\end{equation*}
In particular, for $i=0$ we get 
$\phi^\al_r=\sum_{k+j=r}(-1)^{k+r}\tbinom{r}{k}D_j\zeta^\al_k$. 
The easy test shows that this unique choice solves the problem.
\end{proof}

\begin{cor} 
The following statements hold:
\begin{itemize}
	\item 
		in the filtration $\{\cE^{(q)}\mid q\in\bbZ_+\}$ the linear spaces \\
		$\cE^{(q)}=\oplus_{|k|\le q}\cE^k, \quad 
		\cE^k=\big\{X=\eps^{k\al}_{\phi i}\cdot\p_{u^\al_i} \ 
		\big| \ \eps^k_\phi=(\eps^{k\al}_{\phi i})\in\Phi^k\big \}$; 
	\item 
		$\lim_{q\to\infty}\cE^{(q)}=\fD_V=\oplus_{k\in\I}\cE^k$.  
\end{itemize}
\end{cor}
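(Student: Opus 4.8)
The plan is to unwind the recursive definition of the filtration $\{\cE^{(q)}\}$ (Proposition \ref{PE}) into an explicit condition on the coefficient vector $\zeta=(\zeta^\al_i)$ of a vertical differentiation $X=\zeta^\al_i\cdot\p_{u^\al_i}\in\fD_V$, and then to read off the result from the $\I$-grading $\cA^\A_\I=\oplus_{k\in\I}\Phi^k$ of the preceding Theorem together with Lemma \ref{L3}.

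First I would prove, by induction on $q$, that $X\in\cE^{(q)}$ if and only if $\na_r\zeta=0$ for every $r\in\I$ with $|r|=q+1$. The base case is $\cE^{(-1)}=0$, i.e.\ $\zeta=\na_0\zeta=0$. For the inductive step one uses the formula $[D_\mu,X]=(\na_\mu\zeta)^\al_i\cdot\p_{u^\al_i}$, so that $[D_\mu,X]\in\cE^{(q-1)}$ amounts, by the induction hypothesis, to $\na_s(\na_\mu\zeta)=0$ for all $|s|=q$; since the operators $\na_\mu$ commute (Proposition \ref{PC}) we have $\na_s\na_\mu=\na_{s+(\mu)}$, and as $\mu$ runs over $\bm$ and $s$ over $\{|s|=q\}$ the multiindices $s+(\mu)$ fill out exactly $\{|r|=q+1\}$. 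Hence membership $X\in\cE^{(q)}$ is the vanishing of the finite family $\{\na_r\zeta\mid|r|=q+1\}$.

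Next I would feed in the Theorem: write $\zeta=\sum_{k\in\I}\eps^k_{\phi_k}$ uniquely, with $\phi_k\in\cA^\A$. By Lemma \ref{L3}, $\na_r\eps^k_{\phi_k}=(-1)^r\eps^{k-r}_{\phi_k}$, which vanishes when $k-r\notin\I$, so $\na_r\zeta=(-1)^r\sum_{k-r\in\I}\eps^{k-r}_{\phi_k}$. Since the $i=j$ component of $\eps^j_\phi$ is $\phi$ itself, the map $\phi\mapsto\eps^j_\phi$ is injective, and the uniqueness of the $\I$-decomposition gives $\na_r\zeta=0$ iff $\phi_k=0$ for every $k$ with $k-r\in\I$. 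Combining with the first step, $X\in\cE^{(q)}$ iff $\phi_k=0$ for every $k$ dominating some $r$ with $|r|=q+1$, and such an $r$ exists precisely when $|k|\ge q+1$. Therefore $\cE^{(q)}=\big\{\zeta^\al_i\cdot\p_{u^\al_i}\mid\zeta=\sum_{|k|\le q}\eps^k_{\phi_k}\big\}=\oplus_{|k|\le q}\cE^k$, which is the first assertion. The second follows by passing to the limit: $\bigcup_q\cE^{(q)}=\oplus_{k\in\I}\cE^k$, and under $\zeta\mapsto\zeta^\al_i\cdot\p_{u^\al_i}$ the Theorem's identity $\cA^\A_\I=\oplus_{k\in\I}\Phi^k$ becomes $\fD_V=\oplus_{k\in\I}\cE^k$, so the filtration exhausts $\fD_V$ (in the topological sense discussed above for the filtration $\{\cA^{(q)}_H\}$).

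The only delicate point is the bookkeeping in the first step — that the $(q+1)$-fold nested bracket condition collapses to the symmetric family $\{\na_r\mid|r|=q+1\}$, which is exactly where the flatness $F=0$ of Proposition \ref{PC} is used — together with the elementary fact that $\{r\in\I\mid|r|=q+1,\ k-r\in\I\}\neq\varnothing$ iff $|k|\ge q+1$.
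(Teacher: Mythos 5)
Your proposal is correct and follows essentially the route the paper intends (it states the corollary without a separate proof, as an immediate consequence of the grading theorem): you characterize $\cE^{(q)}$ by the vanishing of $\na_r\zeta$ for $|r|=q+1$ using the flatness of Proposition \ref{PC}, and then read off the result from the decomposition $\cA^\A_\I=\oplus_{k\in\I}\Phi^k$ via Lemma \ref{L3}. The inductive bookkeeping and the observation that $\{r\in\I\mid |r|=q+1,\ k-r\in\I\}\neq\varnothing$ exactly when $|k|\ge q+1$ are handled correctly, and your caveat that the limit statement is meant in the same (topological) sense as for $\{\cA^{(q)}_H\}$ matches the paper's own convention.
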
 

Consider the variation bicomplex 
\begin{equation*}
	\{\Om^{pq};d^{pq}_V,d^{pq}_H\mid p\in\bbZ_+,0\le q\le m\}, 
	\quad\text{where}\quad \Om^{pq}=\Om^{pq}(\cA,\cA). 
\end{equation*}
Here, 
\begin{itemize} 
	\item 
		the vertical $\cA$-basis is $\p=\{\p_{u^\al_i}\mid \al\in\A, \ i\in\I\}$ 
		has the dual basis 
		$\rho=\big\{\rho^\al_i=du^\al_i-u^\al_{i+(\mu)}dx^\mu \ \big| \ 
		\al\in\A, \ i\in\I\big\}$, $\rho^\al_i(\p_{u^\be_j})=\de^\al_\be\de^j_i$, 
		$\rho^\al_i(D_\mu)=0$, 
	\item 
		the horizontal $\cA$-basis $D=\{D_\mu\mid \mu\in\bm\}$ 
		has the dual basis $\vte=\{\vte^\mu=dx^\mu\mid\mu\in\bm\}$, 
		$dx^\mu(\p_{u^\al_i})=0$, $dx^\mu(D_\nu)=\de^\mu_\nu$.
\end{itemize} 

We augment the variation bicomplex and add 
\begin{itemize} 
	\item 
		the horizontal complex $\{\Om^q_R;d^q_R\mid 0\le q\le m\}$, 
		where $\Om^q_R=\Om^q(\cC^\infty(X))$, $d^q_R=d^q$, 
		i.e., the standard de Rham complex of the space $X=\bbR^\bm$;
	\item 
	the vertical complex $\{\fF^p;\de^p\mid p\in\bbZ_+\}$, where  
	$\fF^p=\Om^{pm}\big/\im d^{p,m-1}$ are quotient linear spaces of 
	{\it functional $p$-forms}, $\de^p$ are quotient differentials,  
	$[\om]\mapsto\de^p[\om]=[d^{pm}_V\om]$, 
	$[\om]$ is the equivalence class of the form $\om\in\Om^{pm}$.
\end{itemize}

The resulting augmented bicomplex is presented on the page \pageref{abc}.  

\begin{theorem}
The augmented bicomplex is acyclic, i.e., all his raws and columns are exact.
\end{theorem}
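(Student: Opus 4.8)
The plan is to prove acyclicity by producing contracting homotopies along the two directions of the bicomplex and then dealing with the two augmenting edges by hand. It suffices to check: (i) every column, i.e. the $d_V$-complex $\Om^{0q}\to\Om^{1q}\to\cdots$ for fixed $q$, is exact, with $\ke d^{0q}_V=\Om^q_R$; (ii) every row, i.e. the $d_H$-complex $\Om^{p0}\to\cdots\to\Om^{pm}$ for fixed $p$, is exact, its right-hand end being $\Om^{pm}\big/\im d^{p,m-1}_H=\fF^p$; and (iii) the two one-dimensional edge complexes --- the de Rham complex $0\to\bbF\to\Om^0_R\to\cdots\to\Om^m_R\to0$ of $\X=\bbR^\bm$ and the functional complex $\{\fF^p;\de^p\mid p\in\bbZ_+\}$ --- are themselves exact. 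Since everything lives inside $\cA=\cC^\infty_{fin}(\X\bU)$, each form involves only finitely many of the $x^\mu,u^\al_i$, so all sums and integrals used below are finite.

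For the columns I would invoke the fibrewise Poincar\'e lemma along the fibre $\bU=\bbR^\A_\I$, an affine and hence contractible space. The homotopy $h_V:\Om^{pq}\to\Om^{p-1,q}$ is the usual Poincar\'e operator in the $\bu$-variables --- contraction towards $\bu=0$ together with interior product with the radial (Euler) field $E=u^\al_i\cdot\p_{u^\al_i}\in\fD_V$ --- and, applied coefficientwise, it gives $d_V h_V+h_V d_V=\id$ on $\Om^{pq}$ for $p\ge1$ and $d_V h_V+h_V d_V=\id-\pr_0$ on $\Om^{0q}$, where $\pr_0$ restricts coefficients to $\bu=0$. Hence $\ke d^{0q}_V$ is exactly the set of forms with coefficients in $\cC^\infty(\X)$, namely $\Om^q_R$, and every column is exact. (The $\I$-grading $\cA^\A_\I=\oplus_{k\in\I}\Phi^k$ together with the splitting $\cE^{(q)}=\oplus_{|k|\le q}\cE^k$ obtained above furnishes a purely algebraic incarnation of this homotopy.) The exactness of $\{\fF^p;\de^p\}$ then follows from the description of the spectral sequence in the regular case, $E^{pm}_2=E^{pm}_\infty=H^{pm}_{VH}=\ke\de^p\big/\im\de^{p-1}$, together with the contractibility of $\X\bU$: the latter forces $E^{pm}_\infty=0$ for all $p\ge0$, hence $\ke\de^p=\im\de^{p-1}$; for $p=0$ this is the classical statement that a Lagrangian annihilated by the Euler--Lagrange operator $\de=d^{0m}_1$ is a total horizontal divergence.

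For the rows the heart of the matter is a horizontal homotopy operator $h_H:\Om^{pq}\to\Om^{p,q-1}$ for $p\ge1$, built from the total derivatives $D_\mu$ by the usual integration-by-parts recursion and using the relation $d_H\rho^a=\Ga^a_{\mu b}\cdot\rho^b\w\vte^\mu$ (equivalently $d_H\rho^\al_i=dx^\mu\w\rho^\al_{i+(\mu)}$) computed earlier; one checks $d_H h_H+h_H d_H=\id$ on $\Om^{pq}$ for $1\le q\le m-1$, while a top-order argument --- the contact factors $\rho^\al_i$ of maximal $|i|$ in a $d_H$-closed $\om\in\Om^{p0}$ produce, through $d_H\rho^\al_i$, terms of strictly higher order that nothing can cancel --- shows that $d^{p0}_H$ is injective. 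With the definition of $\fF^p$ this makes each row $p\ge1$ exact. For the bottom row $p=0$ one combines $h_H$ with the classical Poincar\'e lemma on $\X=\bbR^\bm$, applied to the $\bu$-independent part, to get $\ke d^{00}_H=\bbF$ (in agreement with $\cA_\cD=\bbF$, Lemma \ref{L1}) and $H^{0q}_H=0$ for $1\le q\le m-1$, exactness at $\Om^{0m}$ being the very definition $\fF^0=\Om^{0m}\big/\im d^{0,m-1}_H$. Finally, the augmenting de Rham complex $0\to\bbF\to\Om^0_R\to\cdots\to\Om^m_R\to0$ is exact by the Poincar\'e lemma for $\bbR^\bm$. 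Collecting (i)--(iii), all rows and columns of the augmented bicomplex are exact.

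The hard part will be the construction of $h_H$ and, even more, the bookkeeping at the two edges $q=0$ and $q=m$, where the naive homotopy formula breaks down: this is precisely what forces the augmenting complexes $\{\Om^q_R\}$ and $\{\fF^p\}$ (and the constants $\bbF$) into the picture, and verifying that $h_V$, $h_H$ and the edge maps $\Om^q_R\hookrightarrow\Om^{0q}$, $\Om^{pm}\twoheadrightarrow\fF^p$ fit together consistently is the real content of the argument.
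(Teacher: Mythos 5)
The paper itself gives no argument for this theorem: it simply refers the reader to the literature \cite{TT},\cite{PO},\cite{AI}, and your outline follows exactly the route taken there --- vertical exactness by a fibrewise contraction homotopy in the $\bu$-variables, horizontal exactness of the interior rows by an integration-by-parts homotopy, the classical Poincar\'e lemma on $\X=\bbR^\bm$ for the augmenting de Rham row, and the regular-case spectral sequence for the functional column. So the strategy is the standard one and is sound in outline.

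As a proof, however, there is a genuine gap, and it is the one you yourself flag at the end: the horizontal homotopy operator $h_H$ with $d_Hh_H+h_Hd_H=\id$ on $\Om^{pq}$, $p\ge1$, is never constructed, and its construction (via higher/interior Euler operators and combinatorial identities of exactly the type appearing in the paper's Lemmas~\ref{L2}, \ref{L3} and the grading $\cA^\A_\I=\oplus_{k\in\I}\Phi^k$) is precisely the hard content of the theorem; postulating it begs the question. Two further steps are asserted rather than proved. First, exactness of the bottom row at $0<q<m$ does not follow from ``$h_H$ plus the Poincar\'e lemma applied to the $\bu$-independent part'': the interior-row homotopy is not available at $p=0$, and the $p=0$ horizontal cohomology is of topological nature (it equals the de Rham cohomology of $\X\bU$), so any proof must genuinely mix $d_V$ and $d_H$, e.g.\ a zig-zag argument using the vertical homotopy, rather than treat the $\bu$-independent part separately. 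Second, your functional-column argument invokes the vanishing of the total de Rham cohomology of $\Om(\cA,\cA)$ (``contractibility of $\X\bU$''), i.e.\ the formal Poincar\'e lemma, which itself requires proof (cf.\ \cite{Z1}) or must be deduced from the row/column exactness you are in the middle of establishing --- so the logical order has to be arranged to avoid circularity. With $h_H$ actually constructed and these two points repaired, your scheme does yield the theorem, in essentially the form found in the cited references.
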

\begin{proof} 
The detailed proof  and the history of this famous theorem and close results 
on can find, for example, in \cite{TT},\cite{PO},\cite{AI}.  
\end{proof}

\begin{figure}[t]
\begin{equation*}
{
\begin{diagram}[2em]
&&&&\vdots&&\vdots&&&&\vdots&&\vdots&&  \\
&&&&\uTo^{d^{20}_V}&&\uTo^{d^{21}_V}&&&&\uTo{d^{2m}_V}& 
&\uTo^{\de^2}&&  \\
&&0&\rTo&\Om^{20}&\rTo^{d^{20}_H}&\Om^{21}&\rTo^{d^{21}_H}&\dots
&\rTo^{d^{2,m-1}_H}&\Om^{2m}&\rTo&\fF^2&\rTo&0 \\
&&&&\uTo^{d^{10}_V}&&\uTo^{d^{11}_V}&&&&\uTo{d^{1m}_V}&
&\uTo^{\de^1}&&  \\
&&0&\rTo&\Om^{10}&\rTo^{d^{10}_H}&\Om^{11}&\rTo^{d^{11}_H}&\dots
&\rTo^{d^{1,m-1}_H}&\Om^{1m}&\rTo&\fF^1&\rTo&0 \\
&&\uTo&&\uTo^{d^{00}_V}&&\uTo^{d^{01}_V}&&&&\uTo{d^{0m}_V}&           &\uTo^{\de^0}&&  \\
0&\rTo&\bbF&\rTo&\Om^{00}&\rTo^{d^{00}_H}&\Om^{01}&\rTo^{d^{01}_H}
&\dots&\rTo^{d^{0,m-1}_H}&\Om^{0m}&\rTo&\fF^0&\rTo&0 \\
&&\uTo&&\uTo&&\uTo&&&&\uTo&&\uTo&&  \\
0&\rTo&\bbF&\rTo&\Om_R^0&\rTo^{d^0_R}&\Om^1_R&\rTo^{d^1_R}&\dots&
\rTo^{d^{m-1}_R}&\Om_R^m  &\rTo&0&&  \\
&&\uTo&&\uTo&&\uTo&&&&\uTo&&&&  \\
&&0&&0&&0&&&&0&&&&
\end{diagram}
}
\label{abc}
\end{equation*}
\end{figure} 

In the algebraic approach a nonlinear system of partial differential equations 
is written as $F=0$, where $F=\{F^\si\in\cA\mid \si\in\Sr\}$, 
$\Sr$ is an index set. 
The associated differential ideal  
$\cI_F=\{f=P_\si(\cD) F^\si\mid P_\si(\cD)\in\cA[\cD], \ \si\in\Sr\}$, 
where $\cA[\cD]$ is the unital associative noncommutative algebra 
of all polynomials in indeterminate $\cD=\{D_\mu\mid\mu\in\bm\}$ 
with  coefficients in $\cA$. 
The quotient differential algebra $(\bcA,\bcD)$ is called the 
{\it differential algebra associated to the system $F=0$} 
(see, for example, \cite{Z4},\cite{Z6} for more detail). 
This allows to write the spectral sequence 
({\it the Vinogradov spectral sequence} \cite{AV}) 
associated with the system $F=0$. 
The calculation of this sequence or some of its terms is quite another problem, 
usually extremely hard. 
To construct the associated variation bicomplex one should first contrive 
to write the quotient differential algebra $(\bcA,\bcD)$ in the regular form 
and then follow the procedure presented in Section \ref{S5}. 
Again, one is left with the calculation problem. 

\section{Conclusion.}
The technics and methods presented above were approbated in the author's works 
\cite{Z},\cite{Z0},\cite{Z1},\cite{Z2},\cite{Z4},\cite{Z5},\cite{Z7}. 
They may be useful in the researches 
\cite{KMTV},\cite{MS},\cite{KMO},\cite{SAG},\cite{TV},\cite{GAK},\cite{DYN},\cite{VK}.


\begin{thebibliography}{99} 

\bibitem{Z} 
	Zharinov V.V., 
	``On analysis in algebras and modules'',
	Proc. Steklov Inst. Math. (in print).
		
\bibitem{Z0} 
	Zharinov V.V., 
	``On differentiations in differential algebras'', 
	 Integral Transform. Spec. Funct., 4:1-2 (1996), 163--180. 
	 
\bibitem{RJF} 
	Ritt J.F., 
	Differential algebra, 
	A. M. S. Collog. Publ., 33, New York, 1950. 
	
\bibitem{KER} 
	Kolchin E.R., 
	Differential algebra and algebraic groups, 
	Academic Press, New York, 1950, 1973. 
	
\bibitem{KI} 
	Kaplansky I., 
	An introduction to differential algebra, 
	Hermann, Paris, 1957. 

\bibitem{SM} 
	Maclane S., 
	Homology, 
	(Grundlehren Math.Wiss., Vol. 114), Springer, Berlin, 1963. 
	
\bibitem{RG}
	Godement R., 
	Topologie alg\'ebrique et th\'eorie des faisceaux, 
	Hermann, Paris, 1958.

\bibitem{AV} 
	Vinogradov A.M., 
	``A spectral sequence associated with a nonlinear differential equation, 
	and algebro-geometric foundations of Lagrangian fields theory with constraints'', 
	Soviet Math. Dokl., 19 (1978), 144--148. 

\bibitem{TT}
	Tsujishita T.,
	``On variation bicomplexes associated to differential equations'',
	Osaka J. Math., 19 (1982), 311--363. 
	
\bibitem{PO}
	Olver P.J., Applications of Lie Groups to Differential Equations 
	Springer-Verlag, New York, 1986.

\bibitem{AI}
    Anderson I.M.,
    The variational bicomplex,
    Dept. Math., Utah State University: Logan, Utah, November 2004, 318 pages;
    http://www.math.usu.edu/~fg\_mp/Publications/VB/vb.pdf.

\bibitem{Z1} 
	Zharinov V.V., 
	``The formal de Rham complex'', 
	Theor. Math. Phys., 174:2 (2013), 220--235. 

	
\bibitem{Z2} 
	Zharinov V.V., 
	``Algebraic aspects of gauge theories'', 
	Theor. Math. Phys., 180:2 (2014), 942--957. 
	
\bibitem{Z3} 
	Zharinov V.V., 
	Algebro-geometricheskie osnovy matematicheskoi fiziki,
	Lektc. kursy NOTC, 9, MIAN, M, 2008. 

\bibitem{Z4}
	Zharinov V.V., 
	``Conservation laws, differential identities, and constraints 
	of partial differential equations'', 
	Theoret. and Math. Phys., 185:2 (2015), 1557--1581. 

\bibitem{Z5}
	Zharinov V.V., 
	``Backlund transformations'', 
	Theoret. and Math. Phys., 189:3 (2016), 1681--1692. 
	
\bibitem{Z6} 
	Zharinov V.V., 
	Lecture notes on geometrical aspects of partial differential equations, 
	World Scientific, Singapore-New Jersey-London-Hong Kong, 1992. 
	
\bibitem{Z7}
	Zharinov V.V., 
	``Hamiltonian operators in differential algebras'', 
	Theoret. and Math. Phys., 193:3 (2017), 1725--1736. 
	
\bibitem{KMTV}
	Kozyrev S.V, Mironov A.A., Teretenkov A.E., Volovich I.V, 
	``Flows in nonequilibrium quantum systems and quantum photosynthesis'', 
	Infin. Dimens. Anal. Quantum Probab. Relat. Top., 20:4 (2017), 
	1750021, 19 pp.. 

\bibitem{MS}
	Marchuk N.G., Shirokov D.S., 
	``General solutions of one class of field equations'', 
	Rep. Math. Phys., 78:3 (2016), 305--326 , arXiv: 1406.6665.   

\bibitem{KMO}
	Katanaev M.O., 
	``Killing vector fields and a homogeneous isotropic universe'', 
	Phys. Usp., 59:7 (2016), 689--700.

\bibitem{SAG} 
	Sergeev A.G., 
	``Spin Geometry of Dirac and Noncommutative Geometry of Connes'', 
	Proc. Steklov Inst. Math., 298 (2017), 256--293.  

\bibitem{TV}
	Trushechkin A.S., Volovich I.V., 
	``Perturbative treatment of inter-site couplings in the local description 
	of open quantum networks'', 
	EPL, 113:3 (2016), 30005, 6 pp., arXiv: 1509.05754. 

\bibitem{GAK} 
	Gushchin A.K., 
	``Solvability of the Dirichlet problem for an inhomogeneous second-order 
	elliptic equation'', Sb. Math., 206:10 (2015), 1410--1439. 

\bibitem{DYN}
	Drozhzhinov Yu. N, 
	``Multidimensional Tauberian theorems for generalized functions'', 
	Russian Math. Surveys, 71:6 (2016), 1081--1134. 

\bibitem{VK} 
	Volovich I.V., Kozyrev S.V., 
	``Manipulation of states of a degenerate quantum system'', 
	Proc. Steklov Inst. Math., 294 (2016), 241--251. 




\end{thebibliography}
\end{document}